 \newcommand{\ROM}[1]{\mathrm{\uppercase\expandafter{\romannumeral#1}}}
  \theoremstyle{definition}
 \newtheorem{thm}{Theorem}[section]
 \newtheorem{lem}{Lemma}[section]
 \newtheorem{cor}{Corollary}[section]
 \newtheorem{rem}{Remark}[section]
 \newtheorem{prop}{Proposition}[section]
\newtheorem{ack}{Acknowledgements}   
\title[On submanifolds with CMC tubular hypersurfaces]{\textbf{On submanifolds whose tubular hypersurfaces have constant mean curvatures}}
\author[J. Q. Ge]{Jianquan Ge}\address{School of Mathematical Sciences, Laboratory of Mathematics and Complex Systems, Beijing Normal
University, Beijing 100875, China}\email{jqge@bnu.edu.cn}
\thanks {The project is partially supported by the NSFC (No.11001016), the SRFDP, and the Program for Changjiang Scholars and Innovative
Research Team in University.}
 \subjclass[2000]{ 53C42, 53C24.}
\date{}
\keywords{isoparametric hypersurface, constant mean curvature, tube,
austere submanifold.}
\begin{document}
\maketitle
\begin{abstract}
Motivated by the theory of isoparametric hypersurfaces, we study
submanifolds whose tubular hypersurfaces have some constant ``higher
order mean curvatures". Here a $k$-th order mean curvature $Q_k$
($k\geq1$) of a hypersurface $M^n$ is defined as the $k$-th power
sum of the principal curvatures, or equivalently, of the shape
operator. Many necessary restrictions involving principal
curvatures, higher order mean curvatures and Jacobi operators on
such submanifolds are obtained, which, among other things,
generalize some classical results in the theory of isoparametric
hypersurfaces given by E. Cartan, K. Nomizu, H. F. M{\"u}nzner, Q.
M. Wang, \emph{etc.}. As an application, we finally get a
geometrical filtration for the focal varieties of isoparametric
functions on a complete Riemannian manifold.
\end{abstract}

\section{Introduction}
A hypersurface $M^n$ of a Riemannian manifold $N^{n+1}$ is called
\emph{isoparametric}, if $M^n$ is locally a regular level set of a
function $f$, so-called \emph{isoparametric function}, with the
property that both $\|\nabla f\|^2$ and $\triangle f$ are constant
on the level sets of $f$. One can show that $M^n$ is an
isoparametric hypersurface of $N^{n+1}$ if and only if its nearby
parallel hypersurfaces have constant mean curvature (see
\cite{Th00}, \cite{Ce} for excellent surveys).

The theory of isoparametric hypersurfaces originated from studies on
hypersurfaces of constant principal curvatures in real space forms.
On this topic, E. Cartan started a series of researches by proving
the following characterization (cf.
\cite{Ca38,Ca39,Ca3,Ca4}):\\
\textbf{Theorem.} (\cite{Ca38}) A hypersurface in a real space form
has constant principal curvatures if and only if its nearby parallel
hypersurfaces have constant mean curvature.

Therefore, a hypersurface in a real space form has constant
principal curvatures if and only if it is isoparametric. This
characterization does not hold in more general ambient spaces; see
\cite{Wa82}, \cite{GTY} and \cite{DD10} where counterexamples are
given in complex projective spaces and complex hyperbolic spaces.
However, under an additional assumption that it is a
curvature-adapted hypersurface, this characterization also holds in
a locally rank one symmetric space as showed in Theorem 1.4 of
\cite{GTY}. Note that in a real space form every hypersurface is
curvature-adapted and thus this assumption is superfluous. In this
paper, by applying the Riccati equation and some algebraic geometry,
we will give a generalization of this characterization for these two
cases by using ``higher order mean curvature" instead of (1st order)
mean curvature; see Theorem \ref{Thm-Hyper-space form} and Theorem
\ref{Thm-Hyper-rank one} later. Here higher order mean curvatures
will be defined by power sum polynomials of the principal curvatures
other than elementary symmetric polynomials as usual.

Notice that parallel hypersurfaces of a hypersurface $M$ can be
looked as half-tubular hypersurfaces of $M$, we turn to consider
submanifolds whose tubular hypersurfaces have some constant mean
curvatures. Recall that in the classical theory of isoparametric
hypersurfaces in unit spheres,  Nomizu \cite{No73} showed that each
compact isoparametric hypersurface is a tubular hypersurface of some
(exactly two) submanifold(s), namely focal submanifold(s), and by
using the constancy of the mean curvature of these tubular
hypersurfaces, he proved that the focal submanifolds are minimal.
Later, as a fundamental step in his remarkable work, M{\"u}nzner
\cite{Mu80} proved that these focal submanifolds have constant
principal curvatures which implies the austerity\footnote{A
submanifold of a Riemannian manifold is called an \emph{austere
submanifold} in the sense of \cite{HL82} if its principal curvatures
in any normal direction occur as pairs of opposite signs.} and also
the minimality of the focal submanifolds. Here we say that a
submanifold of higher codimension has \emph{constant principal
curvatures}, if the set of the eigenvalues of the shape operator
$S_{\nu}$ at any point is independent of the choices of the unit
normal vector $\nu$ and the point of the submanifold. This is
different from that in \cite{BCO03} where the principal curvatures
are constant with respect to a (local) parallel normal vector field
and thus may depend on the choices of unit normal vectors.

When the ambient space is a general complete Riemannian manifold
$N^{n+1}$ and $f$ is a global isoparametric function on $N$, Wang
\cite{Wa87} showed that (1) there are at most two singular level
sets, namely the focal varieties of $f$, and they are submanifolds
(both may be disconnected and of different
dimensions\footnote{Henceforth, a connected component of the focal
varieties of an isoparametric function $f$ on a complete Riemannian
manifold $N$ will be called a \emph{focal submanifold} of $f$.}) of
$N$; (2) each regular level set (isoparametric hypersurface) of $f$
is a tubular hypersurface around either of the focal varieties; (3)
(claimed without proof) the focal varieties are minimal. Based on
the structural results (1-2) for the focal varieties, Wang's claim
(3) just asserts the minimality of submanifolds whose tubular
hypersurfaces have constant (1st order) mean curvature, which
generalizes Nomizu's result to arbitrary Riemannian manifolds (see a
more general result of this form for compact submanifolds in
\cite{MP05}). However, M{\"u}nzner's result mentioned above does not
hold in this general case, but it indeed holds for submanifolds
whose tubular hypersurfaces have constant principal curvatures (and
thus each order mean curvature is constant); see a proof of this
assertion and Wang's claim (3) in \cite{GT10}. In this paper, we
will study submanifolds whose tubular hypersurfaces have some
constant higher order mean curvatures in a general Riemannian
manifold. By some technical treatment for the Taylor expansion
formulae of higher order mean curvatures of the tubular
hypersurfaces, we will show that such submanifolds must have some
higher order mean curvatures and some curvature invariants involving
the Jacobi operator of the ambient space being constant, which in
particular will generalize the results mentioned above given by
\cite{No73}, \cite{Mu80}, \cite{Wa87} and \cite{GT10}; see Theorem
\ref{Thm-subm-l} later. As an application, we finally get a
geometrical filtration for the focal submanifolds of isoparametric
functions on a complete Riemannian manifold according to the
filtration of isoparametric functions introduced by \cite{GTY}; see
Theorem \ref{Thm-focal-filt} later.

To state the theorems explicitly, we have to set up some notations.
First of all, as in \cite{GTY} we denote by $\rho_k$ (resp.
$\sigma_i$) the $k$-th power sum polynomial (resp. the $i$-th
elementary symmetric polynomial) in $n$ variables for $k\geq1$ and
$\rho_0\equiv n$ (resp. $1\leq i\leq n$ and $\sigma_0\equiv1$). For
an $n$ by $n$ real symmetric matrix (or self-dual operator) $A$ with
$n$ real eigenvalues $(\mu_1,\cdots,\mu_n)=:\mu$, we denote by
$\rho_k(A):=tr(A^k)=\rho_k(\mu)$ and $\sigma_i(A)=\sigma_i(\mu)$.

Let $M^m$ be a submanifold of a Riemannian manifold $N^{n+1}$. For
any unit normal vector $\nu\in \mathcal {V}_1M$ (unit normal bundle
of $M$), denote by $S_{\nu}$ the shape operator of $M^m$ in
direction $\nu$. Then for any $k\geq1$, we define the \emph{$k$-th
order mean curvature} $Q_k^{\nu}$ in direction $\nu$ by the $k$-th
power sum polynomial of the shape operator other than the $k$-th
elementary symmetric polynomial as usual, \emph{i.e.},
\begin{equation*}
Q_k^{\nu}:=\rho_k(S_{\nu})=tr((S_{\nu})^k).
\end{equation*}
When $M$ is a hypersurface and $\nu$ is a fixed global unit normal
vector field, we simply write the $k$-th order mean curvature
$Q_k^{\nu}$ by $Q_k$. Recall that in \cite{GTY}, we introduced the
following notions: For $1\leq k\leq n$, a non-constant smooth
function $f$ on a Riemannian manifold $N^{n+1}$ is called
\emph{$k$-isoparametric}, if $\|\nabla f\|^2$ and $\rho_1(H_f),
\rho_2(H_f),\cdots,\rho_k(H_f)$ are constant on the level sets of
$f$,\footnote{In \cite{GTY}, we assumed some smoothness of these
functions as one-parameter functions of $f$ for some regularity
reasons; also see a note given in \cite{GT10}. Anyway, without
confusion, we emphasize the geometrical meaning behind the algebraic
definitions.} where $H_f$ is the Hessian of $f$ on $N^{n+1}$; a
hypersurface $M^n$ of $N^{n+1}$ is called \emph{$k$-isoparametric},
if $M^n$ is locally a regular level set of a $k$-isoparametric
function on $N^{n+1}$; an $n$-isoparametric function (hypersurface)
on $N^{n+1}$ is also called a \emph{totally isoparametric} function
(hypersurface). Note that $1$-isoparametric functions
(hypersurfaces) are just isoparametric functions (hypersurfaces). It
was proved there that $M^n$ is a $k$-isoparametric hypersurface if
and only if its nearby parallel hypersurfaces have constant higher
order mean curvatures $Q_1,Q_2,\cdots,Q_k$. Therefore, the sets of
$1$-, $2$-, $\cdots$, $n$-isoparametric functions (hypersurfaces)
give a filtration for isoparametric functions (hypersurfaces) on a
Riemannian manifold $N^{n+1}$ with the filtered geometrical property
that $1$-isoparametric hypersurfaces have constant ($1$st order)
mean curvature, $2$-isoparametric hypersurfaces have constant $1st$
and $2$nd order mean curvatures, and so on, finally,
$n$-isoparametric hypersurfaces have constant principal curvatures.

Let $R$ be the Riemannian curvature tensor and $\nabla$ the
Levi-Civita connection of $N^{n+1}$. In this paper we use the
curvature convention as the following: for any tangent vectors
(vector fields) $X,Y,Z,W$ of $N^{n+1}$,
\begin{equation*}
R(X,Y,Z)=R_{XY}Z:=(\nabla_{[X,Y]}-[\nabla_{X},\nabla_{Y}])Z,
\end{equation*}
and then the covariant derivative $\nabla R$ is also a tensor field
and can be written as:
\begin{eqnarray}
&&(\nabla
R)(X,Y,Z,W)=(\nabla_{W}R)(X,Y,Z)\nonumber\\&:=&\nabla_{W}(R_{XY}Z)-R_{(\nabla_{W}X)Y}Z-R_{X(\nabla_{W}Y)}Z-R_{XY}(\nabla_{W}Z).\nonumber
\end{eqnarray}
 For any tangent vector $\xi\in\mathcal {T}N$, the \emph{Jacobi operator}
$K_{\xi}:\mathcal {T}N\rightarrow\mathcal {T}N$ of $N^{n+1}$ in
direction $\xi$ is defined by
\begin{equation}\label{defn-Jacobi}
K_{\xi}(X):=R_{\xi X}\xi,\quad for~~X\in\mathcal {T}N.
\end{equation}
Note that by properties of the Riemannian curvature tensor,
$K_{\xi}$ is a self-dual linear operator, $tr(K_{\xi})=Ric(\xi)$ is
just the Ricci curvature in direction $\xi$, and $K_{\xi}=K_{-\xi}$,
$K_{\xi}(\xi)\equiv0$. Then without confusion, we will use the same
symbol $K_{\xi}$ when the Jacobi operator is looked as a self-dual
operator on the subspace $\xi^{\bot}$ normal to $\xi$ in $\mathcal
{T}N$. Recall that a submanifold $M^m$ of $N^{n+1}$ is called
\emph{curvature-adapted} (or \emph{compatible}), if the direct sum
$S_{\nu}\oplus I_{n-m}$ of the shape operator $S_{\nu}$ and the
identity map commutes with the Jacobi operator $K_{\nu}$, or
equivalently, these two self-dual operators are simultaneously
diagonalizable, for any unit normal vector $\nu$ of $M$ (cf.
\cite{Bern91}, \cite{Gr04}).

Corresponding to the decomposition of the tangent bundle $\mathcal
{T}N$ on $M^m$, we would like to decompose the Jacobi operator
$K_{\nu}$ ($\nu\in \mathcal {V}_1M$) into two self-dual linear
operators, say \emph{tangent Jacobi operator}
$K_{\nu}^{\top}:\mathcal {T}M\rightarrow \mathcal {T}M$ and
\emph{vertical Jacobi operator} $K_{\nu}^{\bot}:\mathcal
{V}M\rightarrow\mathcal {V}M$ as the following:
\begin{eqnarray}
&&K_{\nu}^{\top}(X):=\emph{projection to}~\mathcal {T}M~\emph{of}~
K_{\nu}(X),\quad for~~ X\in\mathcal {T}M,\nonumber\\
&&K_{\nu}^{\bot}(\eta):=\emph{projection to}~\mathcal
{V}M~\emph{of}~ K_{\nu}(\eta),\quad\quad for~~ \eta\in\mathcal
{V}M.\nonumber
\end{eqnarray}
Obviously, $K_{\nu}^{\top}$, $K_{\nu}^{\bot}$ are self-dual linear
operators and $K_{\nu}^{\top}=K_{-\nu}^{\top}$,
$K_{\nu}^{\bot}=K_{-\nu}^{\bot}$, $K_{\nu}^{\bot}(\nu)\equiv0$.
Without confusion, we denote by the same symbol $K_{\nu}^{\bot}$
when the vertical Jacobi operator $K_{\nu}^{\bot}$ is restricted to
the subspace $\nu^{\bot}\cap \mathcal {V}M$ in $\mathcal {V}M$. Then
under any orthonormal frame $\{e_1,\cdots,e_{n+1}\}$ of $\mathcal
{T}N$ on $M$ with $e_1,\cdots,e_m$ tangent to $M$ and
$e_{m+1},\cdots,e_n,e_{n+1}=\nu$ normal to $M$, the Jacobi operator
$K_{\nu}$ can be expressed as the following symmetric matrix
\begin{equation}\label{Jacobi matrix}
K_{\nu}=\left(\begin{array}{cc|c}K_{\nu}^{\top}&B_{\nu}&0\\
B_{\nu}^t&K_{\nu}^{\bot}&0\\ \hline0&0&0\end{array}\right),
\end{equation}
 where
$K_{\nu}^{\top}$ and $K_{\nu}^{\bot}$ are the matrix expressions of
the tangent and (restricted) vertical Jacobi operators, $B_{\nu}$ is
an $m$ by $(n-m)$ matrix with the property that $B_{\nu}=B_{-\nu}$.

Finally, for any tangent vector (field) $\xi\in\mathcal {T}N$, we
need to introduce another self-dual linear operator, say
\emph{covariant Jacobi operator} $\mathcal {K}_{\xi}:\mathcal
{T}N\rightarrow\mathcal {T}N$, from covariant derivative of the
Riemannian curvature tensor $R$ as the following:
\begin{equation}\label{defn-cov-Jacobi}
\mathcal {K}_{\xi}(X):=(\nabla
R)(\xi,X,\xi,\xi)=(\nabla_{\xi}R)(\xi,X,\xi),\quad for~~X\in\mathcal
{T}N.
\end{equation}
Note that when $\nabla_{\xi}\xi=0$, $\mathcal
{K}_{\xi}=\nabla_{\xi}K_{\xi}$ is just the covariant derivative of
the Jacobi operator $K_{\xi}$ in direction $\xi$. By properties of
the Riemannian curvature tensor and its covariant derivative, it is
easily seen that $\mathcal {K}_{\xi}$ is a self-dual linear operator
and $\mathcal {K}_{-\xi}=-\mathcal {K}_{\xi}$, $\mathcal
{K}_{\xi}(\xi)\equiv0$. In the same way as the decomposition
(\ref{Jacobi matrix}) of the Jacobi operator $K_{\nu}$ ($\nu\in
\mathcal {V}_1M$), we also decompose the covariant Jacobi operator
$\mathcal {K}_{\nu}$ into two self dual operators, say
\emph{covariant tangent Jacobi operator} $\mathcal {K}_{\nu}^{\top}:
\mathcal {T}M\rightarrow \mathcal {T}M$ and \emph{covariant vertical
Jacobi operator} $\mathcal {K}_{\nu}^{\bot}: \mathcal
{V}M\rightarrow \mathcal {V}M$, as the following:
\begin{eqnarray}
&&\mathcal {K}_{\nu}^{\top}(X):=\emph{projection to}~\mathcal
{T}M~\emph{of}~
\mathcal {K}_{\nu}(X),\quad for~~ X\in\mathcal {T}M,\nonumber\\
&&\mathcal {K}_{\nu}^{\bot}(\eta):=\emph{projection to}~\mathcal
{V}M~\emph{of}~ \mathcal {K}_{\nu}(\eta),\quad\quad for~~
\eta\in\mathcal {V}M.\nonumber
\end{eqnarray}
Obviously, $\mathcal {K}_{\nu}^{\top}$, $\mathcal {K}_{\nu}^{\bot}$
are self-dual linear operators and $\mathcal
{K}_{-\nu}^{\top}=-\mathcal {K}_{\nu}^{\top}$, $\mathcal
{K}_{-\nu}^{\bot}=-\mathcal {K}_{\nu}^{\bot}$, $\mathcal
{K}_{\nu}^{\bot}(\nu)\equiv0$. Without confusion, we denote by the
same symbol $\mathcal {K}_{\nu}^{\bot}$ when the covariant vertical
Jacobi operator $\mathcal {K}_{\nu}^{\bot}$ is restricted to the
subspace $\nu^{\bot}\cap \mathcal {V}M$ in $\mathcal {V}M$. Under
the same orthonormal frame as in (\ref{Jacobi matrix}), the
covariant Jacobi operator can be expressed as the following
symmetric matrix
\begin{equation}\label{cov Jacobi matrix}
\mathcal {K}_{\nu}=\left(\begin{array}{cc|c}\mathcal {K}_{\nu}^{\top}&\mathcal {B}_{\nu}&0\\
\mathcal {B}_{\nu}^t&\mathcal {K}_{\nu}^{\bot}&0\\
\hline0&0&0\end{array}\right),
\end{equation}
 where
$\mathcal {K}_{\nu}^{\top}$ and $\mathcal {K}_{\nu}^{\bot}$ are the
matrix expressions of the covariant tangent and (restricted)
covariant vertical Jacobi operators, $\mathcal {B}_{\nu}$ is an $m$
by $(n-m)$ matrix with the property that $\mathcal
{B}_{-\nu}=-\mathcal {B}_{\nu}$.

Now we are ready to state the theorems. Firstly, for the
hypersurface case, by applying the Riccati equation and some
algebraic geometry, we obtain the following generalizations of
Cartan's Theorem (\cite{Ca38}) and Theorem 1.4 of \cite{GTY},
respectively.
\begin{thm}\label{Thm-Hyper-space form}
A hypersurface in a real space form has constant principal
curvatures if and only if for some $k\geq1$, its nearby parallel
hypersurfaces have constant $k$-th order mean curvature $Q_k$.
\end{thm}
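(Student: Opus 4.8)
The plan is to prove both implications through the Riccati equation governing the parallel family, reserving the real content for the ``only if'' direction, where the poles of a certain rational function recover the whole spectrum.

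\emph{The easy direction.} In a space form of constant curvature $c$, the shape operators $S_t$ of the parallel hypersurfaces satisfy a Riccati equation $S_t' = S_t^2 + cI$, and the principal directions are parallel along the normal geodesics. Hence the operators $S_t$ are simultaneously diagonalizable and each principal curvature evolves by the \emph{scalar} equation $\dot u = u^2 + c$, whose solution $u(t) = \phi_t(\lambda)$ is a fixed, point-independent function of the initial value $\lambda$. If $M$ has constant principal curvatures, then each $\phi_t(\lambda_i)$ is constant on $M_t$, so $Q_k(t) = \sum_i \phi_t(\lambda_i)^k$ is constant on every parallel hypersurface, for every $k$. This gives the implication in one direction with room to spare.

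\emph{The hard direction.} Conversely, fix $k$ and suppose $Q_k$ is constant on the nearby parallels. Writing $u_i(t) = \phi_t(\lambda_i(p))$, the hypothesis says $F_p(t) := \sum_i u_i(t)^k$ is independent of $p$ for all small $t$. The key point is that after the substitution $s = t$ (for $c=0$), $s = \tan t$ (for $c>0$), or $s = \tanh t$ (for $c<0$), each a local diffeomorphism near $0$, the flow becomes fractional linear: $u_i(s) = \lambda_i/(1 - \lambda_i s)$ for $c = 0$, $u_i(s) = (\lambda_i + s)/(1 - \lambda_i s)$ for $c>0$, and $u_i(s) = (\lambda_i + s)/(1 + \lambda_i s)$ for $c<0$. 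Thus for each $p$ the quantity $R_p(s) := \sum_i u_i(s)^k$ is a \emph{rational function} of $s$, and since $R_p$ agrees with a $p$-independent function on an interval about $0$, it is literally the same rational function for every $p$. I would then read off the spectrum by partial fractions: every principal curvature $\lambda$ that is \emph{not} a fixed point of $\dot u = u^2 + c$ produces a pole of $R_p$ of order exactly $k$ at the corresponding point (a finite $\pm1/\lambda$, or $\infty$ when $\lambda = 0$ and $c>0$), with leading Laurent coefficient $m_\lambda\, g(\lambda)$, where $m_\lambda$ is the multiplicity and $g(\lambda)\neq 0$ is explicit. Since these pole data are constant in $p$, every non-fixed-point principal curvature and its multiplicity are constant on $M$.

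\emph{Main obstacle.} The delicate case is the fixed-point eigenvalues, which contribute only \emph{constants} to $R_p$ and so are invisible to the pole analysis. For $c>0$ there are none, and for $c=0$ there is a single fixed point $\lambda = 0$, whose multiplicity is recovered as $n$ minus the (now known) number of nonzero eigenvalues. The genuine difficulty is $c<0$, where $\dot u = u^2 - 1$ has \emph{two} fixed points $\pm 1$: an eigenvalue pinned at $+1$ or $-1$ adds a constant $1$ or $(-1)^k$ to $R_p$, so for even $k$ the pole data determine only $m_{+1} + m_{-1}$, not the two multiplicities separately. I would close this gap by continuity. At every point the full spectrum of $S_p$ equals a fixed multiset $V$ of the determined non-$\pm1$ values together with $N_0 := n - \lvert V\rvert$ values lying in the discrete set $\{+1,-1\}$. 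Since $V$ and $\{+1,-1\}$ are disjoint finite sets and the eigenvalues depend continuously on $p$, each of these $N_0$ eigenvalues stays in $\{+1,-1\}$ and is therefore locally constant; hence $m_{+1}$ and $m_{-1}$ are constant on the connected $M$. Combining this with the pole recovery shows that the entire multiset $\{\lambda_i(p)\}$ is independent of $p$, i.e.\ $M$ has constant principal curvatures.
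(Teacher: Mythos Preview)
Your argument is correct and takes a genuinely different route from the paper. The paper differentiates $Q_k(t)$ repeatedly via the Riccati equation to produce polynomial equations $P_l(\mu_1,\ldots,\mu_n)=\rho_l+(\text{lower order})=0$ for $l=k,k+1,\ldots,k+n-1$, and then invokes an algebraic-geometry lemma (that these $P_l$ form a regular sequence in $\mathbb{C}[x_1,\ldots,x_n]$, ultimately resting on a result of Conca--Krattenthaler--Watanabe about consecutive power sums) to force the common zero set to be finite; connectedness of $M$ then pins down the spectrum. You instead exploit the special feature of real space forms that the scalar Riccati flow integrates to a M\"obius transformation after a single substitution, so $R_p(s)=\sum_i u_i(s)^k$ is rational in $s$ and its pole data recover the non-fixed-point eigenvalues with multiplicities; the residual fixed-point multiplicities in the hyperbolic case fall out by continuity of the characteristic polynomial on the connected $M$. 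Your approach is more elementary and more transparent for Theorem~\ref{Thm-Hyper-space form}. What the paper's method buys is uniformity: the same regular-sequence argument proves Theorem~\ref{Thm-Hyper-rank one} (curvature-adapted hypersurfaces in locally rank-one symmetric spaces), where the Jacobi eigenvalues lie in $\{c,4c\}$ and no single substitution makes all $u_i$ fractional-linear simultaneously, so your pole trick does not directly extend. (One harmless slip: with the paper's convention $\dot u=u^2+c$ and $s=\tanh t$, the $c<0$ formula should read $u=(\lambda-s)/(1-\lambda s)$ rather than $(\lambda+s)/(1+\lambda s)$; this does not affect the argument.)
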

\begin{thm}\label{Thm-Hyper-rank one}
A curvature-adapted hypersurface in a locally rank one symmetric
space has constant principal curvatures if and only if for some
$k\geq1$, its nearby parallel hypersurfaces have constant $k$-th
order mean curvature $Q_k$.
\end{thm}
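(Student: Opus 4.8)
The plan is to track the shape operator $S(t)$ of the parallel hypersurface $M_t$ at signed distance $t$ along the unit-speed normal geodesics, which satisfies a matrix Riccati equation $S'(t)=S(t)^2+K_{\gamma'(t)}$ (up to the chosen sign convention), where $K_{\gamma'(t)}$ is the Jacobi operator along the normal geodesic $\gamma$ transported back to the initial tangent space. Two features of the ambient space make this tractable. First, a locally rank one symmetric space has $\nabla R=0$, so the transported Jacobi operator is constant, $K_{\gamma'(t)}\equiv K_{\nu}$, and its eigenvalues on $\nu^{\bot}$ are fixed constants of the space, taking at most two distinct values $c_1^2,c_2^2$ (with $c_2=2c_1$ in the non-flat case, possibly with a sign for the noncompact type). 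Second, the curvature-adapted hypothesis makes $S_{\nu}=S(0)$ commute with $K_{\nu}$, and since $K_{\nu}$ is constant the Riccati flow preserves simultaneous diagonalizability; hence each principal curvature $\mu_i(t)$ of $M_t$ solves an autonomous scalar Riccati equation $\mu_i'=\mu_i^2+\kappa_i$ with $\kappa_i\in\{c_1^2,c_2^2\}$ and $\mu_i(0)=\lambda_i$ a principal curvature of $M$. This gives explicit closed forms for $\mu_i(t)$, of $\tan$- (respectively $\tanh$-) M\"obius type in $t$.

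The forward implication is the easy direction: if $M$ has constant principal curvatures then the scalar Riccati data $(\lambda_i,\kappa_i)$ form a point-independent multiset along $M$, so each solution curve $\mu_i(t)$ is constant on $M_t$ and therefore every $Q_k(t)=\sum_i\mu_i(t)^k$ is constant on $M_t$ for all $t$ and all $k$; in particular the stated conclusion holds for some (indeed every) $k\geq1$.

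For the reverse implication --- the real content --- fix a single $k\geq1$ with $Q_k(t)$ constant on all nearby $M_t$. Writing the two Jacobi families through the substitution $u=\tan(c_1 t)$ (or $\tanh(c_1 t)$) and using the multiple-angle identity to express $\tan(c_2 t)$ rationally in $u$, the function $Q_k(p,t)=\sum_i\mu_i(p,t)^k$ becomes a rational function $R_p(u)$ in the single variable $u$. The hypothesis says $R_p(u)=R_q(u)$ for all $p,q\in M$, since two rational functions agreeing on an interval coincide. The principal curvatures of $M$ at $p$ are encoded in the poles of $R_p$: each $\mu_i^k$ contributes a pole of order $k$ whose location is a prescribed function of $\lambda_i$ (and $\kappa_i$) and whose leading coefficient records the multiplicity of $\lambda_i$. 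As $R_p$ is independent of $p$, so is this pole data, and hence the multiset of principal curvatures of $M$ is the same at every point, i.e. $M$ has constant principal curvatures. This is the exact analogue, in the two-eigenvalue setting, of the argument behind Theorem \ref{Thm-Hyper-space form}.

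The main obstacle is the step-by-step recovery of the principal curvatures from the poles of the single rational function $R_p(u)$. One must (a) account for the vanishing principal curvatures, which produce no pole, via a dimension count using that the total number of principal curvatures is $\dim M$; (b) treat the compact and noncompact rank one types uniformly, where $\tan$ is replaced by $\tanh$ or $\coth$ and the relevant poles migrate accordingly; and, most delicately, (c) disentangle the contributions of the two Jacobi eigenvalues $c_1^2,c_2^2$, whose poles interleave and can collide --- a pair of poles coming from the $c_1$-family can formally imitate a single pole of the $c_2$-family --- an ambiguity to be resolved by comparing pole orders and leading coefficients. This book-keeping is where the algebraic geometry enters and is the part demanding the most care; the real space form case, Theorem \ref{Thm-Hyper-space form}, is the degenerate instance $c_1=c_2$ in which step (c) is vacuous.
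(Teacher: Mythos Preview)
Your route is genuinely different from the paper's. The paper never solves the scalar Riccati equations nor looks at poles of a rational function; instead it differentiates $Q_k(t)$ repeatedly using $\mu_i'=\mu_i^2+\kappa_i$ to obtain, for each $j\ge0$, a polynomial relation
\[
\frac{1}{k(k+1)\cdots(k+j-1)}\,Q_k^{(j)}(t)=\rho_{k+j}(\mu_1,\dots,\mu_n)+\widehat P_{k+j-1}(\mu_1,\dots,\mu_n),
\]
and then invokes an algebraic-geometry lemma (the $P_k,\dots,P_{k+n-1}$ form a regular sequence, via a result of Conca--Krattenthaler--Watanabe) to conclude that the principal curvatures lie in a \emph{finite} subset of $\mathbb{C}^n$. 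Connectedness of $M_t$ then forces constancy. Crucially, the paper never claims that $Q_k$ determines the configuration---only that it determines it up to finitely many possibilities.

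Your disentangling step (c) has a real gap, and the proposed fix by ``pole orders and leading coefficients'' does not work. Near any pole $t_0$, every solution of $\mu'=\mu^2+\kappa$ behaves as $\mu(t)\sim -1/(t-t_0)$ \emph{regardless of $\kappa$}; hence the leading Laurent coefficient of $\mu_i^k$ at a pole is always $(-1)^k$, and the $\kappa$-dependence enters only at order $(t-t_0)^{-k+2}$, which for $k\le2$ is not part of the principal part. Worse, the identity $\tan(\theta+\tfrac{\pi}{4})+\tan(\theta-\tfrac{\pi}{4})=2\tan2\theta$ shows that a suitably phased pair of $c_1$-solutions sums \emph{identically} to a single $c_2$-solution. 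Concretely, with $c_1=1$, the two admissible configurations
\[
\{1,-1\}_{\kappa=1}\cup\{2\}_{\kappa=4}\qquad\text{and}\qquad\{1+\sqrt2,\,1-\sqrt2\}_{\kappa=1}\cup\{0\}_{\kappa=4}
\]
(both with two $c_1$-curvatures and one $c_2$-curvature, as occurs for real hypersurfaces in $\mathbb{CP}^2$) yield the \emph{same} function $Q_1(t)$ for all $t$, but have different principal-curvature multisets $\{1,-1,2\}$ and $\{1+\sqrt2,1-\sqrt2,0\}$. So the map from configurations to $Q_k$ is not injective, and the pole data of $R_p$ alone cannot recover the principal curvatures.

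What saves the theorem is finiteness, not injectivity: there are only finitely many configurations producing a given $Q_k$, and continuity plus connectedness of $M$ then pins the configuration down. Your pole picture can certainly be pushed to give this finiteness, but once you retreat to ``finite fibre $+$ connectedness'' you are carrying out the paper's argument by other bookkeeping, and the clean regular-sequence lemma does the job uniformly in $k$ without any case analysis of the compact/noncompact type or of colliding poles.
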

\begin{rem}
In these cases, the hypersurface is totally isoparametric. On the
other hand, it is still unknown that whether a hypersurface of
constant principal curvatures in a locally rank one symmetric space
other than real space form is totally isoparametric.
\end{rem}

For general submanifolds, by some technical treatment for the Taylor
expansion formulae of higher order mean curvatures of the tubular
hypersurfaces, we obtain the following generalizations of those
results on geometry of the focal submanifolds in the theory of
isoparametric hypersurfaces given by \cite{No73}, \cite{Mu80},
\cite{Wa87} and \cite{GT10}.
\begin{thm}\label{Thm-subm-l}
Let $M^m$ be a submanifold of a Riemannian manifold $N^{n+1}$.
Suppose that on any nearby tubular hypersurface $M_t^n$ of $M^m$ in
$N^{n+1}$ ($t\in(0,\varepsilon)$),
\begin{itemize}
\item[(a)] for $1\leq l\leq 4$, the $l$-th, $(l+1)$-th, $\cdots$, $(l+[\frac{l}{2}])$-th order mean
curvatures $Q_{l},Q_{l+1},\cdots,Q_{l+[\frac{l}{2}]}$ are constant;
\item[(b)] for $l\geq5$, the $l$-th, $(l+1)$-th, $\cdots$, $(l+[\frac{l}{2}]+1)$-th order mean
curvatures $Q_{l},Q_{l+1},\cdots,Q_{l+[\frac{l}{2}]+1}$ are
constant.
\end{itemize}
Then the $l$-th order mean curvature $Q_{l}^{\nu}$ in any direction
$\nu$ is a constant independent of the choices of the unit normal
vector $\nu$ and the point of $M^m$, and so are the curvature
invariants: $\rho_{[\frac{l}{2}]}(K_{\nu}^{\bot})$ when $l$ is even;
and $tr\Big((K_{\nu}^{\bot})^{[\frac{l}{2}]-1}\mathcal
{K}_{\nu}^{\bot}\Big)$ when $l\geq3$ is odd. In particular, when $l$
is odd, $Q_{l}^{\nu}\equiv0$ and
$tr\Big((K_{\nu}^{\bot})^{[\frac{l}{2}]-1}\mathcal
{K}_{\nu}^{\bot}\Big)\equiv0$. Furthermore, if in addition we assume
the constancy of $Q_{l-1}$ in (a) and (b) for $l\geq2$, then we have
a new constant curvature invariant
$tr\Big((S_{\nu})^{l-2}K_{\nu}^{\top}\Big)$ which also vanishes when
$l$ is odd.
\end{thm}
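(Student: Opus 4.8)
The plan is to set up a tube of radius $t$ around $M^m$ and compute the Taylor expansion of the $k$-th order mean curvatures $Q_k$ of the tubular hypersurface $M_t^n$ in powers of $t$. The principal geometric tool is the \emph{Riccati equation} for the shape operator $S(t)$ of the parallel-transported hypersurfaces along a unit-speed geodesic normal to $M$: one has $S'(t) = S(t)^2 + K_{\nu}(t)$, where $K_{\nu}(t)$ is the Jacobi operator along the geodesic. Near $M$ the shape operator $S(t)$ behaves, in the directions normal to $M$ inside the tube fibre, like $\tfrac{1}{t} I + O(1)$ (the blow-up reflecting the shrinking of the sphere fibre), while in the tangent directions to $M$ it starts from the intrinsic shape operator $S_{\nu}$. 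The strategy is therefore to expand $S(t)$ as a Laurent-type series in $t$ about $t=0$, feed it into $Q_k(t) = \rho_k(S(t)) = tr(S(t)^k)$, and collect the resulting asymptotic expansion in $t$.

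First I would write $S(t)$ in the block-diagonalizing orthonormal frame adapted to the splitting $\mathcal{T}N|_M = \mathcal{T}M \oplus \mathcal{V}M$, solve the Riccati equation order by order, and record how each coefficient of the $t$-expansion of $S(t)$ is built from $S_\nu$, the vertical Jacobi operator $K_\nu^\bot$, the tangent Jacobi operator $K_\nu^\top$, and the covariant (vertical/tangent) Jacobi operators $\mathcal{K}_\nu^\bot$, $\mathcal{K}_\nu^\top$ coming from $\nabla R$. The key bookkeeping observation is a \emph{parity and order} phenomenon: the normal-direction contribution of $S(t)^k$ produces a leading singular term of order $t^{-k}$ whose coefficient is a pure number $(n-m)$, and successive corrections lower the power of $t$ by integer steps; the curvature invariants $\rho_{[l/2]}(K_\nu^\bot)$ and $tr((K_\nu^\bot)^{[l/2]-1}\mathcal{K}_\nu^\bot)$ appear precisely at a fixed place in this expansion for each $l$. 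Then I would assume constancy (over the level sets $M_t^n$, i.e.\ independence of the base point of $M$ and of $\nu$) of the prescribed consecutive block $Q_l,\dots,Q_{l+[l/2]}$ (resp.\ $\dots,Q_{l+[l/2]+1}$ for $l\geq5$) and extract, by comparing coefficients of the appropriate powers of $t$, a \emph{triangular linear system} whose unknowns are $Q_l^\nu$ and the named curvature invariants. Inverting this system recursively yields their constancy.

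The reason for the two regimes ($l \leq 4$ versus $l \geq 5$) and for needing $[l/2]$ or $[l/2]+1$ consecutive mean curvatures is that the number of unknown curvature invariants entering the relevant coefficients grows with $l$, so more equations (higher $Q_{l+j}$) are required to close the triangular system; I expect the careful counting of exactly how many consecutive orders are needed, and verifying the system is nondegenerate (i.e.\ the triangular coefficients do not vanish), to be the \textbf{main obstacle}. The parity statement—that $Q_l^\nu \equiv 0$ and $tr((K_\nu^\bot)^{[l/2]-1}\mathcal{K}_\nu^\bot)\equiv 0$ for odd $l$—follows from the symmetry $S_{-\nu} = -S_\nu$ together with $K_{-\nu}^\bot = K_\nu^\bot$ and $\mathcal{K}_{-\nu}^\bot = -\mathcal{K}_\nu^\bot$: replacing $\nu$ by $-\nu$ sends these odd-degree invariants to their negatives while constancy forces them to be $\nu$-independent, hence zero. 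Finally, for the last assertion, adding the constancy of $Q_{l-1}$ supplies one further equation in the same expansion; since the mixed invariant $tr(S_\nu^{l-2}K_\nu^\top)$ enters the next coefficient with a nonzero triangular weight, I would solve for it and apply the same $\nu \to -\nu$ parity argument (now using $K_{-\nu}^\top = K_\nu^\top$ against $S_{-\nu}^{l-2} = (-1)^{l-2} S_\nu^{l-2}$) to conclude it vanishes for odd $l$.
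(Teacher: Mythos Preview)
Your proposal is correct and follows essentially the same route as the paper: expand the shape operator of the tube as a Laurent series via the Riccati equation (the paper does this through Fermi coordinates in Section~2, obtaining $t\widehat S(t)=-A_0+\sum_{r\ge1}A_rt^r$ with explicit $A_0,\dots,A_3$), form $\Upsilon_i(t)=t^iQ_i(t)$, and read off linear relations among the coefficients $\Upsilon_{ir}$; the parity conclusions come exactly from the $\nu\mapsto-\nu$ symmetries you name.

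One point worth flagging: what you call a ``triangular linear system'' is not literally triangular in the naive ordering, and the paper's main technical contribution is the specific combinatorial device that makes it solvable. Concretely, the paper introduces counting functions $\Theta(a,b),\overline\Theta(a,b)$ satisfying a Pascal-type recursion, and then forms the binomial combinations $\Phi_r(e)=\sum_{j=0}^e\binom{e}{j}\Upsilon_{r+j,\,r}$ and $\Psi_r(e)=\sum_{j=0}^e\binom{e}{j}\Upsilon_{r-1+j,\,r}$; the recursion collapses these to clean closed forms such as $\Phi_{2d}(d+1)=Q_{2d}^\nu$ and $\Phi_{2d}(d)=Q_{2d}^\nu+3^{-d}\rho_d(K_\nu^\bot)$. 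This is precisely the ``nondegeneracy'' you flag as the main obstacle, and it is where the effort lies; your outline would work once you discover these binomial combinations (or an equivalent elimination scheme). The low cases $1\le l\le4$ are then handled by direct inspection of $\Upsilon_{ir}$ for small $i,r$, exactly as you would do.
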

\begin{rem}
As indicated by the theorem, if we assume more constant higher order
mean curvatures on tubular hypersurfaces, we would possibly get more
constant curvature invariants such as
$tr\Big((K_{\nu}^{\bot})^{i}(\mathcal {K}_{\nu}^{\bot})^{j}\Big)$,
$tr\Big((S_{\nu})^{i}(K_{\nu}^{\top})^j\Big)$ and even
$tr\Big((S_{\nu})^{i}(\mathcal {K}_{\nu}^{\top})^{j}\Big)$ on the
submanifold, though the computations would be rather complicated.
See Theorem \ref{Thm-subm-d} for a more detailed description.
\end{rem}

At last, we conclude this section by the following geometrical
filtration for the focal submanifolds of isoparametric functions on
a complete Riemannian manifold according to the filtration of
isoparametric functions by $1$-,$2$-,$\cdots$,$n$-isoparametric
functions.
\begin{thm}\label{Thm-focal-filt}
Let $M^m$ be a focal submanifold of an isoparametric function $f$ on
a complete Riemannian manifold $N^{n+1}$. Suppose that $f$ is a
$k$-isoparametric function for some $1\leq k\leq n$.
\begin{itemize}
\item[(i)] If $k=1$, then for any unit normal vector
$\nu$ on $M$,
\begin{equation*}
Q_1^{\nu}\equiv0, \quad
Q_2^{\nu}+tr(K_{\nu}^{\top})+\frac{1}{3}tr(K_{\nu}^{\bot})\equiv
Const\footnote{Throughout this paper, ``$\equiv Const$" for
something involving $\nu$ means that it is a constant independent of
the choices of the unit normal vector $\nu\in\mathcal {V}_1M$ and
the point of the submanifold $M$.},
\end{equation*}
\begin{equation*}
Q_3^{\nu}+tr(S_{\nu}K_{\nu}^{\top})+\frac{1}{2}tr(\mathcal
{K}_{\nu}^{\top})+\frac{1}{4}tr(\mathcal {K}_{\nu}^{\bot})\equiv 0,
\end{equation*}
in particular, $M$ is a minimal submanifold in $N$;
\item[(ii)] If $k=2$, then besides the identities in (i), we have further
\begin{equation*}
Q_2^{\nu}-\frac{2}{3}tr(K_{\nu}^{\bot})\equiv Const,\quad
Q_3^{\nu}+tr(S_{\nu}K_{\nu}^{\top})-\frac{1}{4}tr(\mathcal
{K}_{\nu}^{\bot})\equiv 0,
\end{equation*}
and thus $tr(K_{\nu})=Ric(\nu)\equiv Const, \quad tr(\mathcal
{K}_{\nu})\equiv0;$
\item[(iii)] If $k=3$, then besides the identities in (i-ii), we have
further
\begin{equation*}
tr(K_{\nu}^{\bot})\equiv Const,\quad
Q_3^{\nu}+\frac{3}{4}tr(\mathcal {K}_{\nu}^{\bot})\equiv 0,
\end{equation*}
and thus $Q_2^{\nu}\equiv Const, \quad
tr(S_{\nu}K_{\nu}^{\top})-tr(\mathcal {K}_{\nu}^{\bot})\equiv 0;$
\item[(iv)] If $k=4$, then besides the identities in (i-iii), we have
further
\begin{equation*}
tr(\mathcal {K}_{\nu}^{\bot})\equiv 0,
\end{equation*}
and thus $Q_3^{\nu}\equiv 0, \quad tr(S_{\nu}K_{\nu}^{\top})\equiv
0;$
\item[(v)] If $k=5$, then besides the identities in (i-iv), we have
further
\begin{equation*}
Q_{4}^{\nu}-\frac{2}{9}\rho_2(K_{\nu}^{\bot})\equiv Const, \quad
3tr\Big(S_{\nu}^{2}K_{\nu}^{\top}\Big)+\rho_2(K_{\nu}^{\bot})\equiv
Const;\end{equation*}
\item[(vi)] If $k=6$, then besides the identities in (i-v), we have
further
\begin{equation*}
\rho_2(K_{\nu}^{\bot})\equiv Const,\quad
2tr\Big(S_{\nu}^{3}K_{\nu}^{\top}\Big)+3Q_{5}^{\nu}+\frac{1}{12}tr\Big(K_{\nu}^{\bot}\mathcal
{K}_{\nu}^{\bot}\Big)\equiv0;
\end{equation*}
and thus $Q_4^{\nu}\equiv Const,\quad
tr(S_{\nu}^2K_{\nu}^{\top})\equiv Const;$
\item[(vii)] If $k=3d+1,d\geq2$, then besides the identities for $k\leq 3d$, we have further
\begin{eqnarray}
&&Q_{2d}^{\nu}\equiv Const,\quad \rho_d(K_{\nu}^{\bot})\equiv Const,
\quad tr(S_{\nu}^{2d-2}K_{\nu}^{\top})\equiv Const,\nonumber\\
&&Q_{2d+1}^{\nu}-d~3^{-d+1}4^{-1}tr\Big((K_{\nu}^{\bot})^{d-1}\mathcal
{K}_{\nu}^{\bot}\Big)\equiv 0,\nonumber\\
&&(2d)tr\Big(S_{\nu}^{2d-1}K_{\nu}^{\top}\Big)+(3d+1)Q_{2d+1}^{\nu}\equiv
0;\nonumber
\end{eqnarray}
\item[(viii)] If $k=3d+2,d\geq2$, then besides the identities in (i-vii), we have further
\begin{eqnarray}
&&Q_{2d+1}^{\nu}\equiv 0,\quad
tr\Big(S_{\nu}^{2d-1}K_{\nu}^{\top}\Big)\equiv0,\quad
tr\Big((K_{\nu}^{\bot})^{d-1}\mathcal {K}_{\nu}^{\bot}\Big)\equiv
0,\nonumber\\
&&(2d+1)tr\Big(S_{\nu}^{2d}K_{\nu}^{\top}\Big)+(3d+2)Q_{2d+2}^{\nu}-3^{-d-1}\rho_{d+1}(K_{\nu}^{\bot})\equiv
Const;\nonumber
\end{eqnarray}
\item[(ix)] If $k=3d+3,d\geq2$, then besides the identities in (i-viii), we have further
\begin{eqnarray}
&&Q_{2d+2}^{\nu}+3^{-d-1}\rho_{d+1}(K_{\nu}^{\bot})\equiv Const; \nonumber\\
&&2tr\Big(S_{\nu}^{2d+1}K_{\nu}^{\top}\Big)+3Q_{2d+3}^{\nu}+3^{-d}4^{-1}tr\Big((K_{\nu}^{\bot})^{d}\mathcal
{K}_{\nu}^{\bot}\Big)\equiv0.\nonumber
\end{eqnarray}
\end{itemize}
Furthermore,
\begin{itemize}
\item[(a)] if $m=0$, \emph{i.e.}, $M$ is a point, then the Ricci
curvature of $N$ is constant on $M$;
\item[(b)] if $m=n$, \emph{i.e.}, $M$ is a hypersurface,
 then $M$ is a $k$-isoparametric hypersurface
with $Q_1=Q_3=\cdots=Q_{2j+1}\equiv0$ for $2j+1\leq k$;
\item[(c)] if $m\leq[\frac{2k+1}{3}]$ for $k\leq6$, or $m\leq[\frac{2k-1}{3}]$ for $k\geq7$, or $k=n$, then $M$ is an austere submanifold
of constant principal curvatures in $N$, if in addition
$m=2,~n\geq4$; or $m=3,~ n\geq5$; or $m=4,~n\geq10$, then $M$ is a
totally geodesic submanifold;
\item[(d)] if $m\geq n-[\frac{k}{3}]$ for $k\leq6$, or $m\geq n-[\frac{k-1}{3}]$ for $k\geq7$, or $k=n$, then the
vertical Jacobi operator $K_{\nu}^{\bot}$ has constant eigenvalues
independent of the choices of the unit normal vector $\nu$ and the
point of $M$, or equivalently, the restriction of the Riemannian
curvature model $(\mathcal {T}N,R)$ of $N$ to the normal bundle of
$M$ is an Osserman curvature model.
\end{itemize}
\end{thm}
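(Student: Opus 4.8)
The plan is to run the Taylor--Laurent expansion machinery for tubular hypersurfaces -- the detailed version of Theorem~\ref{Thm-subm-l} recorded in Theorem~\ref{Thm-subm-d} -- against the specific input coming from a $k$-isoparametric function, and then to read off the filtration coefficient by coefficient. First I would invoke Wang's structural results: the regular level sets of $f$ are exactly the tubular hypersurfaces $M_t^n$ ($t\in(0,\varepsilon)$) around the focal submanifold $M^m$. Combined with the characterization of \cite{GTY} quoted above -- that $f$ is $k$-isoparametric if and only if $Q_1,\dots,Q_k$ are constant on these level sets -- this says precisely that $Q_1,\dots,Q_k$ are constant on every $M_t^n$. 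Hence the hypotheses of Theorem~\ref{Thm-subm-l} are met for every admissible index $l$ whose required top order does not exceed $k$, and likewise for the finer expansion.

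The heart of the argument is the Laurent expansion in $t$, and this is where I expect the main obstacle to lie. Parametrizing $M_t^n$ by the unit normal bundle $\mathcal{V}_1M$ via $(p,\nu)\mapsto\exp_p(t\nu)$, the shape operator of $M_t^n$ along the normal geodesic is governed by the Riccati equation, and the focal (spherical) directions contribute poles of order $t^{-1}$. This yields, for each $j$, an expansion $Q_j\big|_{M_t^n}=\sum_i c_{j,i}(p,\nu)\,t^i$ whose coefficients $c_{j,i}$ are universal polynomial expressions in $S_\nu,K_\nu^\top,K_\nu^\bot,\mathcal{K}_\nu^\top,\mathcal{K}_\nu^\bot$. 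The hard part is to pin down these coefficients to sufficiently high order and to track which linear combinations first become accessible as $k$ advances (this is the content of Theorem~\ref{Thm-subm-d}); once they are in hand, the rest is organization. Since $Q_j$ is constant on each $M_t^n$ for every small $t$, and a Laurent series independent of $(p,\nu)$ for all $t$ in an interval must have each coefficient independent of $(p,\nu)$, every $c_{j,i}$ with $j\le k$ is $\equiv Const$.

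The filtration (i)--(ix) is then bookkeeping of these coefficients organized by the residue of $k$ modulo $3$. The parity relations $S_{-\nu}=-S_\nu$, $K_{-\nu}^\top=K_\nu^\top$, $K_{-\nu}^\bot=K_\nu^\bot$, $\mathcal{K}_{-\nu}^\top=-\mathcal{K}_\nu^\top$, $\mathcal{K}_{-\nu}^\bot=-\mathcal{K}_\nu^\bot$ turn every invariant that is an odd function of $\nu$ and also $\equiv Const$ into one that $\equiv 0$ (evaluating at $-\nu$), which produces the vanishing statements $Q_{2j+1}^\nu\equiv 0$, $tr(\mathcal{K}_\nu^\bot)\equiv 0$, and the odd-order identities. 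Subtracting the identities obtained at level $k-1$ from those at level $k$ then isolates the individual invariants: for instance, combining the two $k=1$ and $k=2$ identities gives $tr(K_\nu^\top)+tr(K_\nu^\bot)=Ric(\nu)\equiv Const$, and the direct constancy of $tr(K_\nu^\bot)$ appearing at $k=3$ then upgrades $Q_2^\nu-\tfrac{2}{3}tr(K_\nu^\bot)\equiv Const$ to $Q_2^\nu\equiv Const$. This is exactly why a given spectral quantity becomes constant only once $k$ reaches the next multiple-of-three threshold.

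Finally the structural conclusions (a)--(d) are short deductions. For (a), $m=0$ forces $S_\nu=0$ and $K_\nu^\top=0$, so the $k=1$ identity collapses to $tr(K_\nu^\bot)=Ric(\nu)\equiv Const$. For (b), $\dim\mathcal{V}M=1$ kills $K_\nu^\bot$ and $\mathcal{K}_\nu^\bot$ and the tube degenerates into two parallel hypersurfaces, so constancy of $Q_1,\dots,Q_k$ is $k$-isoparametricity with odd orders vanishing by parity. For (c) the dimension bound guarantees that all of $Q_1^\nu,\dots,Q_m^\nu$ lie among the constant invariants, so by Newton's identities the eigenvalue set of $S_\nu$ is independent of $(p,\nu)$ (constant principal curvatures), vanishing of the odd power sums giving austerity; the totally geodesic refinement follows from the linear algebra of $\nu\mapsto S_\nu$, whose image lies in the traceless symmetric $m\times m$ matrices with a fixed spectrum, so once the normal dimension $n+1-m$ exceeds the spectral constraints the map must annihilate some unit $\nu_0$, forcing $S_{\nu_0}=0$ and hence a trivial spectrum. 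Dually, for (d) the bound makes $\rho_1(K_\nu^\bot),\dots,\rho_{n-m}(K_\nu^\bot)$ all constant, so $K_\nu^\bot$ has $(p,\nu)$-independent eigenvalues, which is precisely the Osserman condition on the normal bundle.
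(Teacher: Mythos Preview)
Your overall strategy matches the paper's proof exactly: Wang's structural results identify the regular level sets with tubes around $M$, the $k$-isoparametric hypothesis makes $Q_1,\dots,Q_k$ constant on each tube, and then the coefficients of the Laurent expansion of $t^iQ_i(t)$ in $t$ (the paper's $\Upsilon_{ir}$) are constant on $\mathcal{V}_1M$. Parts (i)--(ix) are exactly this bookkeeping, with the parity argument for the vanishing statements; (a), (b), (d) follow just as you describe.

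The one real gap is in your argument for the totally geodesic refinement in (c). You write that ``once the normal dimension $n+1-m$ exceeds the spectral constraints the map must annihilate some unit $\nu_0$'', but this is not a proof, and a naive dimension count does not yield the stated bounds. The point is that constancy of the spectrum gives $\|S_\nu\|^2\equiv C$, whence $\langle S_{e_i},S_{e_j}\rangle=C\delta_{ij}$; so if $C>0$ the map $\nu\mapsto S_\nu$ is injective and its image is an $(n{+}1{-}m)$-dimensional \emph{austere} subspace of quadratic forms on $\mathbb{R}^m$. What bounds $n+1-m$ is therefore the maximal dimension of an austere subspace, and for $m=2,3,4$ this is $2,2,6$ respectively, by Bryant's classification (\cite{Bry}); this is the content of Proposition~\ref{austere CPC}. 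Your kernel argument happens to give the right threshold for $m=2$ (since traceless symmetric $2\times2$ matrices already form a $2$-dimensional space), but for $m=3$ it would only give $n\ge8$ rather than $n\ge5$, and for $m=4$ it gives nothing close to $n\ge10$. You need to invoke Bryant's result explicitly.
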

\begin{rem}
Recall that \cite{Ge} introduced $(2j)$-th mean curvature function
$K_{2j}$ and $(2j+1)$-th mean curvature vector field $H_{2j+1}$ on a
submanifold $M^m$ of a Riemannian manifold $N^{n+1}$ which
generalize higher order mean curvature functions on a hypersurface,
and showed that $K_{2j}$ (resp. $H_{2j+1}$) equals, up to a constant
factor, the integral of $\sigma_{2j}(S_{\nu})$ (resp.
$\nu\sigma_{2j+1}(S_\nu)$) over the unit normal sphere of $M$ in
$N$. Consequently, by Newton's identities, $K_{l}\equiv Const$ ($l$
even) and $H_{l}\equiv0$ ($l$ odd) on the focal submanifold $M^m$ of
a $k$-isoparametric function with $l\leq[\frac{2k+1}{3}]$ for
$k\leq6$, or $l\leq[\frac{2k-1}{3}]$ for $k\geq7$.
\end{rem}
\begin{rem}
An isoparametric function is called a \emph{properly isoparametric}
function if the focal submanifolds have codimension greater than $1$
(cf. \cite{GT09}). So $f$ in case (b) is not properly isoparametric
and since in this case the focal submanifolds or their normal line
bundles could be non-orientable, there may be no global unit normal
vector fields, in which case the conclusion in (b) should be
considered as local property on $M$. Note that classically
isoparametric hypersurfaces in unit spheres are assumed to be
connected and thus the focal submanifolds have codimension greater
than one.
\end{rem}
\begin{rem}
It was proved by Chi \cite{Chi1} and Nikolayevsky \cite{N1,N2} that
an Osserman curvature model of dimension $q\neq16$ is isomorphic to
one of the curvature models given by Clifford module structures (see
a detailed introduction in \cite{BGN}). In particular, if $q$ is
odd, then the Jacobi operator of an Osserman curvature model has
only one constant eigenvalue except the trivial eigenvalue $0$. So
if $n-m$ is even in (d), then the restricted vertical Jacobi
operator $K_{\nu}^{\bot}\equiv Const\cdot id$ and thus the sectional
curvatures of $N$ in normal planes of $M$ are constant.
\end{rem}

\section{Shape operators of tubular hypersurfaces}\label{sec shape
op}
In this section, by using the Fermi coordinates, we will mainly
derive a Taylor expansion formula up to order $2$ about $t$ of the
shape operator $S(t)$ of the tubular hypersurface $M_t^n$ of radius
$t\in(0,\varepsilon)$ around a submanifold $M^m$. This Taylor
expansion formula up to order $1$ has been given in \cite{GT10}.

Let $M^m$ be a submanifold of a Riemannian manifold $N^{n+1}$ and
$M_t^n$ be the tubular hypersurface around $M$ of sufficiently small
radius $t\in(0,\varepsilon)$. Then the ``outward" unit normal vector
field $\nu_t$ of $M_t$ for $t\in(0,\varepsilon)$ forms a unit vector
field, say $\xi$, on an open subset $\mathcal
{N}_{\varepsilon}M:=\bigcup_{t\in(0,\varepsilon)}M_t$ of $N^{n+1}$.
The shape operator $S(t)$ of $M_t$ with respect to $\nu_t$ at a
point $q\in M_t$ is just the restriction to $M_t$ of the tensorial
operator $S:\mathcal {T}(\mathcal {N}_{\varepsilon}M)\rightarrow
\mathcal {T}(\mathcal {N}_{\varepsilon}M)$ defined by
\begin{equation}\label{shape def}
S(X):=-\nabla_X\xi,
\end{equation}
 for $X\in \mathcal {T}_{q}(\mathcal {N}_{\varepsilon}M)$, where $\nabla$ denotes the covariant derivative in $N$. It is
easily seen that $S$ is self-dual and $S(\xi)=0$. Taking covariant
derivative of $S$ with respect to $\xi$ gives the well-known Riccati
equation (cf. \cite{Gr04}):
\[\nabla_{\xi}S=S^2+K_{\xi},\]
and its restriction to $M_t$ can be written as
\begin{equation}\label{Riccati eq}
S'(t)=S(t)^2+R(t),
\end{equation}
where $S'(t):=(\nabla_{\xi}S)|_{\mathcal
{T}M_t}=(\nabla_{\nu_t}S)|_{\mathcal {T}M_t}$, $K_{\xi}$ is the
Jacobi operator of $N$ in direction $\xi$ defined in
(\ref{defn-Jacobi}) and $R(t):=K_{\xi}|_{\mathcal
{T}M_t}=K_{\nu_t}|_{\mathcal {T}M_t}$.

Now we choose a system of Fermi coordinates in a neighborhood
$\widetilde{\mathcal {U}}$ of any point $p\in M$ in $N$ as follows
(cf. \cite{GT10}). First we choose normal geodesic coordinates
$(y_1,\cdots,y_m)$ centered at $p$ in a neighborhood $\mathcal {U}$
of $p$ in $M$. Then in $\mathcal {U}$ we fix orthonormal sections
$E_{m+1},\cdots,E_n,E_{n+1}$ of the normal bundle $\mathcal {V}M$ of
$M$ in $N$ such that they are parallel with respect to the normal
connection along any geodesic ray from $p$ in $M$ and
$E_{n+1}|_p=\nu$ for a given unit normal vector $\nu$ of $M$ at $p$.
The Fermi coordinates $(x_1,\cdots,x_n,x_{n+1})$ of $(\mathcal
{U}\subset M\subset)$ $\widetilde{\mathcal {U}}\subset N$ centered
at $p$ are defined by
\begin{eqnarray}
&&x_a\Big(exp_{q}\Big(\sum_{j=m+1}^{n+1}t_jE_j(q)\Big)\Big)=y_a(q)
\quad \quad (a=1,\cdots,m),\nonumber\\
&&x_i\Big(exp_{q}\Big(\sum_{j=m+1}^{n+1}t_jE_j(q)\Big)\Big)=t_i
\quad \quad (i=m+1,\cdots,n+1),\nonumber
\end{eqnarray}
for $q\in\mathcal {U}$ and any sufficiently small numbers
$t_{m+1},\cdots,t_{n+1}$ with $\sum_{i}t_i^2< \varepsilon^2$. Then
the \emph{Generalized Gauss Lemma} shows that in
$\widetilde{\mathcal {U}}-M\subset\mathcal {N}_{\varepsilon}M$,
\begin{equation}\label{Gauss lemma}
\xi=\sum_{i=m+1}^n\frac{x_i}{\sigma}\partial x_i=\nabla\sigma,
\end{equation}
where $\sigma:=\sqrt{\sum_{i}x_i^2}$ is the distance function to $M$
in $\widetilde{\mathcal {U}}$ (cf. \cite{Gr04}).
 It follows from the
definition that along the normal geodesic
$\eta_{\nu}(t):=exp_p(t\nu)$ in $\widetilde{\mathcal {U}}$,
\begin{equation}\label{patial xn+1}
\partial
x_{n+1}|_{\eta_{\nu}(t)}=\eta_{\nu}'(t)=\nu_t=\xi|_{\eta_v(t)},
\quad for~~t\in(0,\varepsilon).
\end{equation}
Moreover, the coordinate vector fields $\partial x_1,\cdots,\partial
x_{n+1}$ satisfy
\begin{eqnarray}\label{Fermi coord}
\nabla_{\partial x_a}\partial x_b|_p\in\mathcal {V}_pM, \quad \quad
\nabla_{\partial x_a}\partial x_i|_p\in\mathcal {T}_pM,\quad \quad\nabla_{\partial x_i}\partial x_j|_{\mathcal {U}}=0,\label{covar-deriv}\\
\langle\partial x_{\alpha}, \partial
x_{\beta}\rangle|_p=\delta_{\alpha\beta}, \quad \quad
\langle\partial x_{a}, \partial x_{i}\rangle|_\mathcal {U}=0,\quad
\quad \langle\partial x_{i}, \partial x_{j}\rangle|_\mathcal
{U}=\delta_{ij}, \nonumber
\end{eqnarray}
where $\langle,\rangle$ denotes the metric, and the indices
convention is that indices $a,b,\cdots\in\{1,\cdots,m\}$, indices
$i,j,\cdots\in\{m+1,\cdots,n+1\}$ and indices
$\alpha,\beta,\cdots\in\{1,\cdots,n+1\}$. Then $\partial
x_1|_p,\cdots,\partial x_m|_p$ form an orthonormal frame of
$\mathcal {T}_pM$ and $\partial x_{m+1}|_p,\cdots,\partial
x_{n+1}|_p=\nu$ form an orthonormal frame of $\mathcal {V}_pM$, and
under these frames, the Jacobi operator $K_{\nu}$, the covariant
Jacobi operator $\mathcal {K}_{\nu}$ of $N$ can be written as real
symmetric matrices as (\ref{Jacobi matrix}) and (\ref{cov Jacobi
matrix}) respectively.

 Now in $\widetilde{\mathcal
{U}}-M\subset\mathcal {N}_{\varepsilon}M$, we express the self-dual
operator $S$ defined by (\ref{shape def}) as a real matrix
$S=(S_{\alpha\beta})$ (not symmetric in general) of order $n+1$
under the coordinate vector fields $\partial x_1,\cdots,\partial
x_{n+1}$, \emph{i.e.}, $S(\partial
x_{\alpha}):=\sum_{\beta=1}^{n+1}S_{\alpha\beta}\partial x_{\beta}$.
By properties of the Fermi coordinates, in \cite{GT10} we obtained
the following expansion formula of $S$.
\begin{prop}(cf. \cite{GT10}) With notations as above,
at the point $\eta_{\nu}(t)=exp_p(t\nu)\in M_t$ for any $t\in
(0,\varepsilon)$, the following expansion formula holds
\begin{eqnarray}\label{shape expansion}
S&=&\left(\begin{array}{ccc}S_{\nu}+t(S_{\nu}^2+K_{\nu}^{\top})+\mathcal{O}(t^2)&
tB_{\nu}+\mathcal {O}(t^2)& \mathcal{O}(t^2)
\\\frac{t}{3}B^t_{\nu}+\mathcal {O}(t^2)&-\frac{1}{t}I+\frac{t}{3}K_{\nu}^{\bot}+\mathcal {O}(t^2)& \mathcal{O}(t^2)
\\0& 0& 0 \end{array}\right),
\end{eqnarray}
where $S_{\nu}:=(h^{\nu}_{ab})$ is the matrix of the shape operator
of $M$ in direction $\nu$ under the orthonormal frame $\partial
x_1|_p,\cdots,\partial x_m|_p$, $\mathcal {O}(t^2)$ denotes matrices
with elements of $t$'s order not less than $2$.
\end{prop}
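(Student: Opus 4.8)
The plan is to extract (\ref{shape expansion}) from the Riccati equation (\ref{Riccati eq}) by passing to its linearization (Jacobi fields) along the normal geodesic $\eta_\nu(t)=\exp_p(t\nu)$, and then to translate the answer from a parallel frame into the coordinate frame $\partial x_1,\dots,\partial x_{n+1}$ in which (\ref{shape expansion}) is written. The feature that must be respected is that $S(t)$ is \emph{singular} at $t=0$: in the directions normal to $M$ inside $M_t$ the tube collapses, so $S(t)$ has a simple pole $-\tfrac1tI$ there, whereas in the directions tangent to $M$ it tends to $S_\nu$. Accordingly I would not expand $S(t)$ directly but the regular object behind it, namely the matrix of Jacobi fields.

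Fix a parallel orthonormal frame $f_1,\dots,f_{n+1}$ along $\eta_\nu$ with $f_{n+1}=\xi$, with $f_1(0),\dots,f_m(0)$ spanning $\mathcal{T}_pM$ and $f_{m+1}(0),\dots,f_n(0)$ spanning $\nu^{\bot}\cap\mathcal{V}_pM$, and let $Y_\alpha(t)$ ($\alpha\le n$) solve the Jacobi equation $Y''=-R(t)Y$ (the linearization of (\ref{Riccati eq}), $R(t)=K_{\eta_\nu'}|_{\mathcal{T}M_t}$) with the geometric initial data $Y_a(0)=f_a,\ Y_a'(0)=-S_\nu f_a$ for $a\le m$ and $Y_i(0)=0,\ Y_i'(0)=f_i$ for $m+1\le i\le n$. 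The first family comes from varying the foot point of $M$; here the choice of the normal sections $E_j$ to be parallel along rays from $p$ is exactly what kills the normal-connection contribution and leaves $Y_a'(0)=-S_\nu f_a$ purely tangential. The second comes from rotating $\nu$ inside the normal sphere. The crucial bookkeeping point is the identification of the coordinate fields with these Jacobi fields along $\eta_\nu$: one has $\partial x_a=Y_a$ but $\partial x_i=\tfrac1tY_i$, the factor $\tfrac1t$ being the radial normalization in $d(\exp_p)_{t\nu}(w)=\tfrac1t\,Y(t)$ ($Y(0)=0,\ Y'(0)=w$) for sphere-type fields, i.e. the polar structure encoded in the Gauss Lemma (\ref{Gauss lemma}).

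Let $Y(t)$ denote the matrix whose columns are the $Y_\alpha$ in the frame $\{f_\beta\}$. Because each $Y_\alpha$ is a variation field through normal geodesics one has $\nabla_\xi Y_\alpha=\nabla_{Y_\alpha}\xi=-S(Y_\alpha)$, i.e. $S(Y_\alpha)=-Y_\alpha'$, which is just the substitution $S=-Y'Y^{-1}$ turning (\ref{Riccati eq}) into $Y''=-R(t)Y$. Feeding in the rescaling $D=\mathrm{diag}(I_m,\tfrac1tI_{n-m})$ that passes from $\{Y_\alpha\}$ to $\{\partial x_\alpha\}$, and remembering the convention $S(\partial x_\alpha)=\sum_\beta S_{\alpha\beta}\partial x_\beta$ (a transpose), one obtains the coordinate-frame matrix $S=-D\,(Y^{-1}Y')^{t}\,D^{-1}$. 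It remains to expand $Y$ from $Y''=-R(t)Y$ with $R(0)=K_\nu$ decomposed into the blocks $K_\nu^\top,B_\nu,K_\nu^\bot$ of (\ref{Jacobi matrix}): the tangent fields give $Y_a=f_a-tS_\nu f_a-\tfrac{t^2}2K_\nu^\top f_a+\cdots$ (with a vertical coupling $-\tfrac{t^2}2B_\nu^{t}f_a$) and the sphere fields $Y_i=tf_i-\tfrac{t^3}6K_\nu^\bot f_i+\cdots$ (with a tangential coupling $-\tfrac{t^3}6B_\nu f_i$), whence the two diagonal blocks $S_\nu+t(S_\nu^2+K_\nu^\top)$ and $-\tfrac1tI+\tfrac t3K_\nu^\bot$ drop out at once (the $\tfrac13$ coming from the $-\tfrac{t^3}6$ term). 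The off-diagonal blocks are generated solely by the coupling $B_\nu$ in $R(0)$, and the conjugation by $D$, which multiplies the tangent-to-vertical block by $t$ and the vertical-to-tangent block by $\tfrac1t$, is precisely what converts the otherwise symmetric contribution into the asymmetric pair $tB_\nu$ and $\tfrac t3B_\nu^{t}$. The last row is $0$ because $S(\xi)=0$, and the last column is $\mathcal{O}(t^2)$, controlled by $\langle S(\cdot),\xi\rangle=0$ (from $|\xi|\equiv1$) together with the Gauss Lemma (\ref{Gauss lemma}).

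The hard part is exactly this singular-frame bookkeeping. The pole $-\tfrac1tI$ forces one to carry $Y$ to order $t^3$ in the vertical block, one order higher than in the tangent block, and to be scrupulous about the non-commuting rescaling $D$ and the transpose dictated by the coefficient convention; it is there, and not in any curvature identity, that the exact constants (the $\tfrac13$'s, and the asymmetry $tB_\nu$ versus $\tfrac t3B_\nu^{t}$) are decided, so a single slip in order or in a transpose corrupts them. An equivalent, more computational route (the one underlying \cite{GT10}) is to Taylor-expand the metric in the Fermi coordinates and compute $S=-\nabla\xi$ directly from (\ref{shape def}) and (\ref{Gauss lemma}) through the Christoffel symbols; there the same constants reappear out of the curvature terms in the Fermi metric, and I would run that computation in parallel as a cross-check on the coefficients.
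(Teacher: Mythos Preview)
Your argument is correct and gives the expansion with the right constants. Note however that it follows a genuinely different route from the one the paper (and \cite{GT10}) uses. The paper does not reprove the proposition here, but its surrounding machinery (Lemmas \ref{metric expa}--\ref{W2} and the proof of the corollary for $S_3$) makes the intended method explicit: one writes $\xi=\nabla\sigma$ via the Generalized Gauss Lemma (\ref{Gauss lemma}), observes that along $\eta_\nu$ one has $-S(\partial x_a)=\nabla_{\partial x_a}\partial x_{n+1}$ and $-S(\partial x_i)=\tfrac1t\partial x_i-\tfrac1t\delta_{i\,n+1}\partial x_{n+1}+\nabla_{\partial x_i}\partial x_{n+1}$, and then reads off the blocks of $S$ from the Taylor expansion of the matrix $W(t)=(w_{\alpha\beta})$ of $\nabla_{\partial x_\alpha}\partial x_{n+1}$ (Lemma \ref{covar of coord fields}). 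In that approach the pole $-\tfrac1tI$ and the $\tfrac13$'s come out of the Christoffel/metric expansion in Fermi coordinates rather than from the $-\tfrac{t^3}{6}R(0)$ term in the Jacobi solution.

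Your Jacobi-field route, linearizing (\ref{Riccati eq}) via $S=-Y'Y^{-1}$ in a parallel frame and then conjugating by $D=\mathrm{diag}(I_m,\tfrac1tI_{n-m})$ with the transpose dictated by the convention $S(\partial x_\alpha)=\sum_\beta S_{\alpha\beta}\partial x_\beta$, is cleaner conceptually: it isolates exactly why the tangent block needs $Y$ to order $t^2$ while the vertical block needs order $t^3$, and it makes transparent that the asymmetry $tB_\nu$ versus $\tfrac t3B_\nu^t$ is produced solely by the $D$-conjugation. The paper's Fermi-coordinate computation, on the other hand, is better adapted to pushing to higher order: the same expansions of $G(t)$ and $W(t)$ feed directly into the determination of $S_3$ in (\ref{shape S3}), whereas extending your approach would require carrying $Y$ to order $t^4$ and tracking $R'(0)=\mathcal{K}_\nu$ through the inversion. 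Your closing remark that the Fermi computation serves as a cross-check is apt; in fact that cross-check \emph{is} the paper's proof.
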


Rewrite the expansion formula (\ref{shape expansion}) by power
series about $t$ as:
\begin{equation}\label{shape power series}
S=\sum_{r=0}^{\infty}t^{r-1}S_r=\frac{1}{t}S_0+S_1+tS_2+t^2S_3+\mathcal{O}(t^3),
\end{equation}
where $$S_0=\left(\begin{array}{cc|c}0&0&0\\
 0&-I&0\\\hline 0&0&0\end{array}\right),\quad S_1=\left(\begin{array}{cc|c}S_{\nu}&0&0\\
 0&0&0\\\hline 0&0&0\end{array}\right),\quad S_2=\left(\begin{array}{cc|c}S_{\nu}^2+K_{\nu}^{\top}&B_{\nu}&0\\
 \frac{1}{3}B^t_{\nu}&\frac{1}{3}K_{\nu}^{\bot}&0\\\hline 0&0&0\end{array}\right),$$ and $S_r, r\geq3,$ are matrices independent of $t$.

 To calculate the coefficient matrix $S_3$ of $t^2$ for
this expansion formula, we need the following lemmas.
\begin{lem}(cf. \cite{GT10}, \cite{MMP06})\label{metric expa}
Let $g_{\alpha\beta}:=\langle\partial x_{\alpha},\partial
x_{\beta}\rangle$ and $G:=(g_{\alpha\beta})$ be the matrix of the
metric. Then at the point $\eta_{\nu}(t)=exp_p(t\nu)\in M_t$, we
have
\begin{eqnarray}
&&g_{ab}(t)=\delta_{ab}-2h^{\nu}_{ab}t+\Big(\sum\limits_ch^{\nu}_{ac}h^{\nu}_{cb}-\langle
R_{\nu\partial x_a}\nu,\partial
x_b\rangle\Big)t^2+O(t^3),\nonumber\\
&&g_{ai}(t)=-\frac{2}{3}\langle R_{\nu\partial x_a}\nu,\partial
x_i\rangle t^2+O(t^3),\nonumber\\
&&g_{ij}(t)=\delta_{ij}-\frac{1}{3}\langle R_{\nu\partial
x_i}\nu,\partial x_j\rangle t^2+O(t^3),\nonumber
\end{eqnarray}
 or in matrix form,
 \begin{equation*}
G(t)=I+t\left(\begin{array}{cc|c}-2S_{\nu}&0&0\\
 0&0&0\\\hline 0&0&0\end{array}\right)+t^2\left(\begin{array}{cc|c}S_{\nu}^2-K_{\nu}^{\top}&-\frac{2}{3}B_{\nu}&0\\
-\frac{2}{3}B_{\nu}^t&-\frac{1}{3}K_{\nu}^{\bot}&0\\
\hline 0&0&0\end{array}\right)+\mathcal {O}(t^3).
\end{equation*}
\end{lem}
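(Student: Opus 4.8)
The plan is to read the metric coefficients off the Jacobi fields generated by the coordinate vector fields along the normal geodesic $\eta_\nu(t)=\exp_p(t\nu)$, rather than off the Riccati equation \eqref{Riccati eq}, since the latter route would require the as-yet-unknown coefficient $S_3$ and hence be circular. First I would record the variational meaning of the coordinate fields along $\eta_\nu$. For a tangential index $a$, the restriction $\partial x_a|_{\eta_\nu(t)}$ is the variation field of the geodesic family $\exp_{c(s)}\big(tE_{n+1}(c(s))\big)$, where $c(s)$ is the $y_a$-coordinate curve through $p$ in $M$; hence $J_a(t):=\partial x_a|_{\eta_\nu(t)}$ is a Jacobi field. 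For a normal index $i$ with $m<i\le n$, one has $\partial x_i|_{\eta_\nu(t)}=d(\exp_p)_{t\nu}(E_i)$ by the coordinate definition, so that $t\,\partial x_i$ is the variation field of the radial family $\exp_p\big(t(\nu+sE_i)\big)$ and is therefore a Jacobi field $\widetilde J_i$. With the curvature convention of this paper the Jacobi equation reads $J''+K_\nu J=0$, so that $J''(0)=-K_\nu(J(0))$.

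Next I would pin down the initial data. Since the $E_j$ are chosen parallel for the normal connection along rays from $p$, the normal part of $\nabla_{\partial x_a}\nu$ vanishes at $p$, giving $J_a'(0)=\nabla_{\partial x_a}\nu=-S_\nu(\partial x_a|_p)$; together with $J_a(0)=\partial x_a|_p$ and $J_a''(0)=-K_\nu(\partial x_a|_p)$ this yields, in a parallel orthonormal frame, $J_a(t)=\partial x_a|_p - tS_\nu(\partial x_a|_p)-\tfrac12 t^2K_\nu(\partial x_a|_p)+O(t^3)$. For the rescaled normal field, $\widetilde J_i(0)=0$ and $\widetilde J_i'(0)=E_i$, whence $\widetilde J_i(t)=tE_i-\tfrac16 t^3K_\nu(E_i)+O(t^4)$ and therefore $\partial x_i|_{\eta_\nu(t)}=E_i-\tfrac16 t^2K_\nu(E_i)+O(t^3)$. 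I regard the asymmetry between the tangential factor $\tfrac12$ and the normal factor $\tfrac16$ as the crux, and I expect the main obstacle to be exactly this bookkeeping: $\partial x_i$ is itself \emph{not} a unit-speed Jacobi field (only $t\,\partial x_i$ is), and it is this rescaling that forces the anomalous coefficients, which are in turn responsible for the $\tfrac13$ and $\tfrac23$ appearing in the statement; the sign of the Jacobi equation must also be tracked carefully against the paper's convention.

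Finally I would expand the inner products $g_{\alpha\beta}(t)=\langle\partial x_\alpha,\partial x_\beta\rangle$ block by block, using that $S_\nu$ and $K_\nu$ are self-dual, that $\langle K_\nu X,Y\rangle=\langle R_{\nu X}\nu,Y\rangle$, and that $\mathcal{T}_pM\perp\mathcal{V}_pM$. For the tangential block this gives $g_{ab}(t)=\delta_{ab}-2h^\nu_{ab}t+\big(\sum_c h^\nu_{ac}h^\nu_{cb}-\langle R_{\nu\partial x_a}\nu,\partial x_b\rangle\big)t^2+O(t^3)$, the quadratic term $\sum_c h^\nu_{ac}h^\nu_{cb}$ arising from $\langle S_\nu\partial x_a,S_\nu\partial x_b\rangle$. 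For the normal block the constant and linear terms are $\delta_{ij}$ and $0$, and the quadratic term is $-\tfrac13\langle R_{\nu\partial x_i}\nu,\partial x_j\rangle$. For the mixed block the linear term vanishes because $S_\nu(\partial x_a|_p)$ is tangent while $E_i$ is normal, and the $t^2$ coefficient combines $-\tfrac12$ from $J_a$ with $-\tfrac16$ from $\partial x_i$ into $-\tfrac23\langle R_{\nu\partial x_a}\nu,\partial x_i\rangle$. Assembling the three blocks produces the claimed matrix form of $G(t)$.
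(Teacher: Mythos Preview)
Your argument is correct. The paper does not actually supply a proof of this lemma; it is quoted from \cite{GT10} and \cite{MMP06}, and the Jacobi--field method you use is precisely the standard route those references (and Gray's \emph{Tubes}) take. The key points---that $\partial x_a|_{\eta_\nu}$ is a Jacobi field with initial data $(\partial x_a|_p,\,-S_\nu\partial x_a|_p)$ thanks to the Fermi property $\nabla_{\partial x_a}\partial x_i|_p\in\mathcal{T}_pM$, and that $t\,\partial x_i|_{\eta_\nu}$ is a Jacobi field with initial data $(0,E_i)$---are exactly right, and your bookkeeping of the $\tfrac12$ versus $\tfrac16$ curvature coefficients is the correct explanation of the $\tfrac23$ and $\tfrac13$ in the mixed and normal blocks. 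The last row and column of $G(t)$ (which you do not treat separately) are also covered by your expansion, since $K_\nu(\nu)=0$ forces $\partial x_{n+1}|_{\eta_\nu(t)}=\eta_\nu'(t)$ to remain orthogonal to the other $\partial x_\alpha$ to the required order; alternatively this is the content of the Generalized Gauss Lemma already recorded in the paper.
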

\begin{lem}(cf. \cite{GT10})\label{covar of coord fields}
Put $\displaystyle\nabla_{\partial x_{\alpha}}\partial
x_{n+1}:=\sum_{\beta}w_{\alpha\beta}\partial x_{\beta}$ and
$W:=(w_{\alpha\beta})$. Then at the point
$\eta_{\nu}(t)=exp_p(t\nu)\in M_t$, we have
\begin{eqnarray}
\nabla_{\partial x_a}\partial
x_{n+1}&=&-\sum\limits_bh^{\nu}_{ab}\partial
x_b-t\sum\limits_b\Big(\sum\limits_ch^{\nu}_{ac}h^{\nu}_{cb}+\langle
R_{\nu\partial x_a}\nu,\partial x_b\rangle\Big)\partial
x_b\nonumber\\&&-t\sum\limits_k\langle R_{\nu\partial
x_a}\nu,\partial x_k\rangle\partial
x_k+\sum\limits_{\alpha}O(t^2)_{\alpha}\partial x_{\alpha},\nonumber\\
\nabla_{\partial x_i}\partial
x_{n+1}&=&-\frac{t}{3}\sum\limits_b\langle R_{\nu\partial
x_i}\nu,\partial x_b\rangle\partial
x_b-\frac{t}{3}\sum\limits_k\langle R_{\nu\partial x_i}\nu,\partial
x_k\rangle\partial x_k+\sum\limits_{\alpha}O(t^2)_{\alpha}\partial
x_{\alpha},\nonumber
\end{eqnarray}
or in matrix form,
\begin{equation*}
W(t)=\left(\begin{array}{cc|c}-S_{\nu}&0&0\\
 0&0&0\\\hline 0&0&0\end{array}\right)+t\left(\begin{array}{cc|c}-S_{\nu}^2-K_{\nu}^{\top}&-B_{\nu}&0\\
-\frac{1}{3}B_{\nu}^t&-\frac{1}{3}K_{\nu}^{\bot}&0\\
\hline 0&0&0\end{array}\right)+\mathcal {O}(t^2).
\end{equation*}
\end{lem}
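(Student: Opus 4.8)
The plan is to obtain $W(t)$ \emph{not} from the Christoffel symbols directly — which would require the transverse derivatives $\partial x_a g_{\beta\gamma}$ and $\partial x_i g_{\beta\gamma}$ that Lemma \ref{metric expa} does not supply (it records only the $t=x_{n+1}$ dependence along $\eta_\nu$) — but by tying $\nabla_{\partial x_\alpha}\partial x_{n+1}$ to the already-known shape operator $S$ through the Generalized Gauss Lemma \eqref{Gauss lemma}. The key observation is that $\xi$ and $\partial x_{n+1}$ agree only along the single geodesic $\eta_\nu$ by \eqref{patial xn+1}, not on a neighborhood, so $W=\nabla\partial x_{n+1}$ and $-S=\nabla\xi$ differ by a correction supported in the normal directions; and that correction is elementary to compute from the explicit distance function $\sigma=\sqrt{\sum_i x_i^2}$.

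Concretely, I would first differentiate the identity $\xi=\sum_{i}\frac{x_i}{\sigma}\partial x_i$ (sum over normal indices $i$) to get
\[
\nabla_{\partial x_\alpha}\xi=\sum_{i}\Big(\partial x_\alpha\tfrac{x_i}{\sigma}\Big)\partial x_i+\sum_{i}\tfrac{x_i}{\sigma}\,\nabla_{\partial x_\alpha}\partial x_i.
\]
Restricting to $\eta_\nu(t)$, where $x_j=t\,\delta_{j,n+1}$ and $\sigma=t$, one has $\frac{x_i}{\sigma}=\delta_{i,n+1}$, so the second sum collapses to $\nabla_{\partial x_\alpha}\partial x_{n+1}=W(\partial x_\alpha)$; combined with $S(\partial x_\alpha)=-\nabla_{\partial x_\alpha}\xi$ from \eqref{shape def} this yields
\[
W(\partial x_\alpha)=-S(\partial x_\alpha)-\sum_{i}\Big(\partial x_\alpha\tfrac{x_i}{\sigma}\Big)\partial x_i.
\]
I would then evaluate the correction: since $\sigma$ is independent of the tangent coordinates, $\partial x_a(\tfrac{x_i}{\sigma})\equiv0$, whereas for a normal index $\alpha$ one has $\partial x_\alpha(\tfrac{x_i}{\sigma})=\tfrac{\delta_{\alpha i}}{\sigma}-\tfrac{x_i x_\alpha}{\sigma^3}$. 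Restricting to $\eta_\nu(t)$, the correction vanishes when $\alpha$ is a tangent index or when $\alpha=n+1$, and equals $\tfrac1t\partial x_\alpha$ when $\alpha=i'\le n$ is a normal index. Substituting the shape-operator expansion \eqref{shape expansion} for $S(\partial x_a)$ and $S(\partial x_i)$, and recognizing $(K_\nu^{\top})_{ab}=\langle R_{\nu\partial x_a}\nu,\partial x_b\rangle$, $(B_\nu)_{ak}=\langle R_{\nu\partial x_a}\nu,\partial x_k\rangle$ and $(K_\nu^{\bot})_{ij}=\langle R_{\nu\partial x_i}\nu,\partial x_j\rangle$ as the claimed curvature entries (using self-duality of $K_\nu$ to pass between $B_\nu$ and $B_\nu^t$), I would read off both displayed formulas and the matrix form of $W(t)$.

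The one delicate point — and the place where a careless argument goes wrong — is this correction term. On tangent vectors it is identically zero, so there $W=-S$ and the first row of \eqref{shape expansion} transcribes directly into the first displayed formula. On a normal vector $\partial x_{i'}$ ($i'\le n$) the correction is exactly the singular term $\tfrac1t\partial x_{i'}$, which must cancel the $-\tfrac1t I$ block of $S$ in \eqref{shape expansion}; the temptation to treat $\tfrac{x_i}{\sigma}=\delta_{i,n+1}$ as a constant (thereby killing its derivative) would spuriously retain this $\tfrac1t$ singularity and destroy the $\mathcal{O}(t)$ statement. Once this cancellation is carried out correctly, the surviving $\tfrac{t}{3}B_\nu^t$ and $\tfrac{t}{3}K_\nu^{\bot}$ entries appear with the stated signs, which completes the proof.
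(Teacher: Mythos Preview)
Your proof is correct and is essentially the paper's own computation run in reverse. The paper does not give a separate proof of this lemma (it is cited from \cite{GT10}), but in the proof of the Corollary for $S_3$ it carries out exactly the differentiation of $\xi=\sum_j\tfrac{x_j}{\sigma}\partial x_j$ that you describe, obtaining $-S(\partial x_a)=\nabla_{\partial x_a}\partial x_{n+1}$ on tangent indices and $-S(\partial x_i)=\tfrac{1}{t}\partial x_i-\tfrac{1}{t}\delta_{i,n+1}\partial x_{n+1}+\nabla_{\partial x_i}\partial x_{n+1}$ on normal indices; the paper then substitutes Lemma~\ref{covar of coord fields} and Lemma~\ref{W2} to read off $S_3$, whereas you substitute the $S$-expansion~\eqref{shape expansion} to read off $W_0$ and $W_1$, including the $\tfrac1t$ cancellation you correctly flag. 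The only point worth noting is one of logical order: since both Proposition~2.1 and Lemma~\ref{covar of coord fields} are imported from \cite{GT10}, and the Corollary here shows the natural flow there is $W\to S$, your derivation presupposes Proposition~2.1 as an independent input; within this paper that is how it is presented, so your argument is sound.
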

\begin{lem}\label{3-cov}
At the point $p\in M$, we have
\begin{eqnarray}
&&\nabla_{\nu}\nabla_{\partial x_{n+1}}\nabla_{\partial
x_{n+1}}\partial x_a=-\nabla_{\nu}(R_{\partial x_{n+1}
\partial x_{a}}\partial x_{n+1})=-\mathcal {K}_{\nu}(\partial
x_{a})-K_{\nu}(\nabla_{\nu}\partial x_{a});\nonumber\\
&&\nabla_{\nu}\nabla_{\partial x_{n+1}}\nabla_{\partial
x_{n+1}}\partial x_i=-\frac{1}{2}\nabla_{\nu}(R_{\partial x_{n+1}
\partial x_{i}}\partial x_{n+1})=-\frac{1}{2}\mathcal {K}_{\nu}(\partial
x_{i});\nonumber\\
&&\nabla_{\nu}\nabla_{\partial x_{n+1}}\partial x_a=-R_{\nu\partial
x_a}\partial x_{n+1}=-K_{\nu}(\partial x_a);\nonumber\\
&&\nabla_{\nu}\nabla_{\partial x_{n+1}}\partial
x_i=-\frac{1}{3}R_{\nu\partial x_i}\partial
x_{n+1}=-\frac{1}{3}K_{\nu}(\partial x_i).\nonumber
\end{eqnarray}
\end{lem}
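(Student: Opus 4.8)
The plan is to work entirely along the normal geodesic $\eta_\nu(t)=\exp_p(t\nu)$ and to read off all four identities from the Jacobi equation. Since $\eta_\nu$ is a geodesic with velocity $\eta_\nu'(t)=\partial x_{n+1}=\xi$, we have $\nabla_{\partial x_{n+1}}\partial x_{n+1}=0$ along $\eta_\nu$; abbreviating $Z':=\nabla_{\partial x_{n+1}}Z$ for a vector field $Z$ along $\eta_\nu$ and setting $K(t):=K_{\partial x_{n+1}}$, the geodesic property gives $K(0)=K_\nu$ and $K'(0)=\mathcal{K}_\nu$ (by the remark that $\mathcal{K}_\xi=\nabla_\xi K_\xi$ when $\nabla_\xi\xi=0$). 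The first observation is that the coordinate fields restricted to $\eta_\nu$ are Jacobi fields: the tangential field $J_a:=\partial x_a|_{\eta_\nu}$ is itself a Jacobi field with $J_a(0)=\partial x_a|_p$ and, by Lemma \ref{covar of coord fields} at $t=0$, $J_a'(0)=\nabla_{\partial x_a}\partial x_{n+1}|_p=-S_\nu(\partial x_a)$; while for a normal index $i$ the field $\partial x_i|_{\eta_\nu}$ is \emph{not} itself Jacobi, but $\tilde J_i:=t\,\partial x_i|_{\eta_\nu}$ is a Jacobi field with $\tilde J_i(0)=0$ and $\tilde J_i'(0)=\partial x_i|_p$. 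These identifications come from the geodesic-variation structure of the Fermi coordinates (foot-point variation for $\partial x_a$, direction-rotation variation for $\partial x_i$) together with the generalized Gauss lemma (\ref{Gauss lemma}).

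Next I would differentiate using the Jacobi equation $Z''=-K(Z)$ and its derivative $Z'''=-K'(Z)-K(Z')$. Because $\nu=\partial x_{n+1}|_p$ and $\nabla_{\partial x_{n+1}}\partial x_a=\nabla_{\partial x_a}\partial x_{n+1}$, the left-hand sides of the third and first identities are exactly $J_a''(0)$ and $J_a'''(0)$. Thus $J_a''(0)=-K(0)J_a(0)=-K_\nu(\partial x_a)$, giving the third identity, and $J_a'''(0)=-K'(0)J_a(0)-K(0)J_a'(0)=-\mathcal{K}_\nu(\partial x_a)-K_\nu(\nabla_\nu\partial x_a)$, giving the first. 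For the normal directions I expand $\tilde J_i$ covariantly to fourth order from the same recursion ($\tilde J_i''(0)=0$, $\tilde J_i'''(0)=-K_\nu(\partial x_i)$, $\tilde J_i''''(0)=-2\mathcal{K}_\nu(\partial x_i)$), obtaining
\[
\tilde J_i(t)=t\,\partial x_i-\tfrac{t^3}{6}K_\nu(\partial x_i)-\tfrac{t^4}{12}\mathcal{K}_\nu(\partial x_i)+\mathcal{O}(t^5).
\]
Dividing by $t$ and reading off the coefficients of $t^2$ and $t^3$ yields $(\partial x_i)''(0)=-\tfrac13K_\nu(\partial x_i)$ and $(\partial x_i)'''(0)=-\tfrac12\mathcal{K}_\nu(\partial x_i)$, i.e. the fourth and second identities; the rational factors $\tfrac13$ and $\tfrac12$ are produced precisely by this division by $t$.

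It then remains to match these with the middle curvature expressions. For the third and fourth identities the middle equalities are just the definition $K_\nu(X)=R_{\nu X}\nu=R_{\partial x_{n+1}X}\partial x_{n+1}$ at $p$. For the first and second I would invoke the Leibniz rule for $\nabla R$ with the arguments $(\partial x_{n+1},\partial x_a,\partial x_{n+1})$ and $(\partial x_{n+1},\partial x_i,\partial x_{n+1})$: at $p$ one has $\nabla_\nu\partial x_{n+1}=\nabla_{\partial x_{n+1}}\partial x_{n+1}|_p=0$ and, by Lemma \ref{covar of coord fields} at $t=0$, $\nabla_\nu\partial x_i=\nabla_{\partial x_i}\partial x_{n+1}|_p=0$, so the expansion of $\mathcal{K}_\nu(\partial x_a)=(\nabla_\nu R)(\partial x_{n+1},\partial x_a,\partial x_{n+1})$ collapses to $\nabla_\nu(R_{\partial x_{n+1}\partial x_a}\partial x_{n+1})=\mathcal{K}_\nu(\partial x_a)+K_\nu(\nabla_\nu\partial x_a)$, and likewise $\nabla_\nu(R_{\partial x_{n+1}\partial x_i}\partial x_{n+1})=\mathcal{K}_\nu(\partial x_i)$. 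These are exactly the middle-equals-last assertions.

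The main obstacle is the normal case. The honest difficulty is that $\partial x_i|_{\eta_\nu}$ is not a Jacobi field, so one must pass to $\tilde J_i=t\,\partial x_i$, carry the covariant Taylor expansion one order further than in the tangential case, and justify that the division by $t$ is legitimate (which it is, since $\tilde J_i(0)=\tilde J_i''(0)=0$ make $\partial x_i|_{\eta_\nu}$ smooth at $p$); this is what produces the delicate coefficients $\tfrac13$ and $\tfrac12$ and distinguishes the normal identities from the tangential ones. A secondary point is the careful verification that the two variations are genuine geodesic variations with the stated initial data. One could instead route the first and second identities through the commutator relation $\nabla_{\partial x_{n+1}}^2\partial x_a=-R_{\partial x_{n+1}\partial x_a}\partial x_{n+1}+\nabla_{\partial x_a}(\nabla_{\partial x_{n+1}}\partial x_{n+1})$, but then the second identity would require a genuine third-order computation of $\nabla_{\partial x_{n+1}}\nabla_{\partial x_i}(\nabla_{\partial x_{n+1}}\partial x_{n+1})$ not supplied by Lemmas \ref{metric expa}--\ref{covar of coord fields}, which the Jacobi-field route sidesteps.
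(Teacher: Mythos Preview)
Your argument is correct and is in essence the same as the paper's, only more explicit. The paper simply notes that $\nabla_{\partial x_{n+1}}\partial x_{n+1}=0$ along $\eta_\nu$, cites Lemmas~9.19 and 9.20 of Gray's \emph{Tubes} for the first equalities (left\,$=$\,middle) in each line, and then reads off the second equalities (middle\,$=$\,last) from the definitions (\ref{defn-Jacobi}), (\ref{defn-cov-Jacobi}) and the Fermi relations (\ref{Fermi coord}). Gray's two lemmas are precisely the Jacobi-field identifications you reconstruct: $\partial x_a|_{\eta_\nu}$ is a Jacobi field (foot-point variation), while $t\,\partial x_i|_{\eta_\nu}$ is a Jacobi field (direction variation $\exp_p(t(E_{n+1}+sE_i))$). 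Your covariant Leibniz computation $(t\partial x_i)^{(k)}(0)=k\,(\partial x_i)^{(k-1)}(0)$ then yields the $\tfrac13$ and $\tfrac12$ exactly as Gray obtains them; your Leibniz expansion of $\nabla_\nu(R_{\partial x_{n+1}\,\cdot\,}\partial x_{n+1})$ at $p$, using $\nabla_\nu\partial x_{n+1}=0$ and $\nabla_\nu\partial x_i=0$, is the paper's ``second equalities'' step. So the only difference is packaging: the paper outsources the Jacobi structure to Gray, whereas you derive it from scratch, which makes your version self-contained at the cost of carrying the normal expansion one order further.

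Two minor remarks. First, for the tangential third-derivative line you actually get left\,$=$\,middle directly, since $J_a''=-R_{\partial x_{n+1}\partial x_a}\partial x_{n+1}$ holds along the whole geodesic and one more $\nabla_{\partial x_{n+1}}$ at $t=0$ is exactly the middle expression; only in the normal case do you genuinely need the transitive route left\,$=$\,last\,$=$\,middle. Second, smoothness of $\partial x_i$ at $p$ is not something to be deduced from $\tilde J_i(0)=\tilde J_i''(0)=0$; it is immediate because $\partial x_i$ is a coordinate field. The point of those vanishings is rather the algebraic identity $(\partial x_i)^{(k)}(0)=\tfrac{1}{k+1}\tilde J_i^{(k+1)}(0)$, which is what produces the fractional coefficients.
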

\begin{proof}
It follows from (\ref{patial xn+1}) that along the geodesic
$\eta_{\nu}(t)=exp_p(t\nu)$, $\nabla_{\partial x_{n+1}}\partial
x_{n+1}=\nabla_{\xi}\xi=0$ and thus $\mathcal
{K}_{\xi}=\nabla_{\xi}K_{\xi}$ by definition
(\ref{defn-cov-Jacobi}). Then the first equalities in the identities
follow from Lemma 9.19 and Lemma 9.20 in \cite{Gr04} and the second
equalities follow immediately from (\ref{defn-Jacobi}),
(\ref{defn-cov-Jacobi}) and (\ref{Fermi coord}).
\end{proof}
\begin{lem}\label{W2}
Let $\displaystyle W(t):=\sum_{r=0}^{\infty}t^rW_r$ be the matrix in
Lemma \ref{covar of coord fields} with $W_r$s independent of $t$.
Then
\begin{equation*}
W_2=\left(\begin{array}{cc|c}-\frac{1}{2}\mathcal {K}_{\nu}^{\top}-
S_{\nu}^3-K_{\nu}^{\top}S_{\nu}&-\frac{1}{2}\mathcal {B}_{\nu}&0\\
-\frac{1}{4}\mathcal {B}_{\nu}^t-\frac{1}{3}B_{\nu}^tS_{\nu}&-\frac{1}{4}\mathcal {K}_{\nu}^{\bot}&0\\
\hline 0&0&0\end{array}\right).
\end{equation*}
\end{lem}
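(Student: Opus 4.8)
Lemma \ref{W2} asks for the coefficient matrix $W_2$ in the Taylor expansion $W(t)=\sum_r t^r W_r$, where $W$ encodes the covariant derivatives $\nabla_{\partial x_\alpha}\partial x_{n+1}$. Lemma \ref{covar of coord fields} already gives $W_0$ and $W_1$; I must push one order further. The organizing idea is that $W_0,W_1,W_2$ are the successive Taylor coefficients in $t$ of the vector fields $\nabla_{\partial x_a}\partial x_{n+1}$ and $\nabla_{\partial x_i}\partial x_{n+1}$ along the normal geodesic $\eta_\nu(t)=\exp_p(t\nu)$, and the $t^2$-coefficient is governed by the second derivative along the geodesic, i.e. by $\tfrac12\nabla_\nu\nabla_{\partial x_{n+1}}(\nabla_{\partial x_\alpha}\partial x_{n+1})$ evaluated at $p$.

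**The plan.** Since $\partial x_{n+1}=\xi=\eta_\nu'(t)$ along the geodesic (by \eqref{patial xn+1}) and $\nabla_{\partial x_{n+1}}\partial x_{n+1}=\nabla_\xi\xi=0$, each entry of $W(t)$ is a smooth function of $t$ whose Taylor coefficients are obtained by repeatedly applying $\nabla_{\partial x_{n+1}}=\nabla_\xi=\tfrac{d}{dt}$ (covariant differentiation along the geodesic) and evaluating at $t=0$. Concretely, for a coordinate field $\partial x_\alpha$ I would write $\nabla_{\partial x_\alpha}\partial x_{n+1}=W(t)$-entry and extract $W_2$ as $\tfrac12$ of the second covariant $t$-derivative of the vector field $\nabla_{\partial x_\alpha}\partial x_{n+1}$ at $p$. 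The essential commutation step is to move the outer $\nabla_\nu\nabla_{\partial x_{n+1}}$ across $\nabla_{\partial x_\alpha}$, trading the difference for a curvature term $R_{\nu\,\partial x_\alpha}(\cdot)$; this is exactly what Lemma \ref{3-cov} has been set up to supply. Thus I would:

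\emph{Step 1.} For the tangent block, compute the second derivative of $\nabla_{\partial x_a}\partial x_{n+1}$ along the geodesic. Commuting the covariant derivatives produces terms involving $\nabla_\nu\nabla_{\partial x_{n+1}}\nabla_{\partial x_{n+1}}\partial x_a$, $\nabla_\nu\nabla_{\partial x_{n+1}}\partial x_a$, and products with the lower-order data $W_0=\mathrm{diag}(-S_\nu,0)$. Substituting the first and third identities of Lemma \ref{3-cov} converts these into $-\tfrac12\mathcal K_\nu(\partial x_a)$, $-K_\nu(\partial x_a)$, and $-K_\nu(\nabla_\nu\partial x_a)$; the composition with $W_0$ and with $S_1$ yields the cubic shape-operator terms $-S_\nu^3-K_\nu^\top S_\nu$ in the tangent-tangent corner and the $-\tfrac13 B_\nu^t S_\nu$ piece coupling the two blocks.

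\emph{Step 2.} For the vertical block, repeat with $\nabla_{\partial x_i}\partial x_{n+1}$, now using the second and fourth identities of Lemma \ref{3-cov} (note the $-\tfrac13$ and $-\tfrac12$ factors there, which are responsible for the $-\tfrac14$ coefficients appearing in $W_2$). The vertical-vertical corner collects only a curvature contribution $-\tfrac14\mathcal K_\nu^\bot$ with no shape-operator term, since $S_\nu$ acts trivially on the normal block. Then project each resulting vector field onto $\mathcal TM$ and $\mathcal VM$ to read off the four matrix corners, and verify the off-diagonal blocks assemble into $-\tfrac12\mathcal B_\nu$ and $-\tfrac14\mathcal B_\nu^t-\tfrac13 B_\nu^t S_\nu$ using the definitions \eqref{defn-cov-Jacobi} and \eqref{cov Jacobi matrix} of the covariant Jacobi operator and its decomposition.

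**The main obstacle.** The delicate point is bookkeeping the commutator correction terms correctly. When I expand $\tfrac{d^2}{dt^2}\big(\nabla_{\partial x_\alpha}\partial x_{n+1}\big)$ and commute the $t$-derivative past $\partial x_\alpha$, I generate cross terms between the first-order covariant derivatives of the \emph{coordinate fields themselves} (whose $t=0$ values are recorded in \eqref{Fermi coord} and whose first-order behavior is implicit in Lemma \ref{covar of coord fields}) and the curvature operators. Keeping track of which terms are $O(t^2)$ and genuinely contribute to $W_2$ versus which are absorbed into higher order—together with the exact rational coefficients $\tfrac12,\tfrac13,\tfrac14$ coming from the integration of the Jacobi equation along the geodesic—is where an error is most likely to creep in. I expect the cleanest route is to differentiate the Riccati relation \eqref{Riccati eq} or the expansion \eqref{shape expansion} rather than recomputing from scratch, cross-checking the tangent-block entry $-\tfrac12\mathcal K_\nu^\top-S_\nu^3-K_\nu^\top S_\nu$ against the already-known coefficient $S_2$ in \eqref{shape power series} to confirm the coefficients are consistent.
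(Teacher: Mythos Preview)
Your overall strategy---differentiate $\nabla_{\partial x_\alpha}\partial x_{n+1}$ twice along the geodesic and invoke Lemma~\ref{3-cov}---is sound in spirit, but there is a genuine gap in the way you set it up. You claim to ``extract $W_2$ as $\tfrac12$ of the second covariant $t$-derivative of the vector field $\nabla_{\partial x_\alpha}\partial x_{n+1}$ at $p$.'' This is not correct: the entries $w_{\alpha\beta}$ are the \emph{components} of that vector field in the coordinate frame $\{\partial x_\beta\}$, and that frame is \emph{not} parallel along $\eta_\nu$. Indeed $\nabla_\xi\partial x_\beta=\nabla_{\partial x_\beta}\partial x_{n+1}=\sum_\gamma w_{\beta\gamma}\partial x_\gamma$, so if $V:=\nabla_{\partial x_\alpha}\partial x_{n+1}$ then the matrix of components of $\nabla_\xi\nabla_\xi V$ at $p$ is $2W_2+2W_1W_0+W_0W_1+W_0^3$, not $2W_2$. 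You seem half-aware of this (the ``composition with $W_0$'' remark in Step~1), but the plan never says how you will separate $W_2$ from these correction products, and getting the exact pattern of left/right multiplications wrong would spoil precisely the coefficients $\tfrac12,\tfrac13,\tfrac14$ you are worried about.

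The paper sidesteps this issue with a cleaner device: it sets $u_{\alpha\beta}=\langle\nabla_{\partial x_\alpha}\partial x_{n+1},\partial x_\beta\rangle$, so that in matrix form $U(t)=W(t)G(t)$ with $G$ the metric matrix from Lemma~\ref{metric expa}. The advantage is that the \emph{scalar} functions $u_{\alpha\beta}$ can be differentiated along the geodesic by the metric Leibniz rule, and the second derivative at $p$ is computed directly from Lemma~\ref{3-cov}:
\[
u_{\alpha\beta}''(0)=\langle\nabla_\nu\partial x_\alpha,\nabla_\nu\nabla_{\partial x_{n+1}}\partial x_\beta\rangle+2\langle\nabla_\nu\nabla_{\partial x_{n+1}}\partial x_\alpha,\nabla_\nu\partial x_\beta\rangle+\langle\nabla_\nu\nabla_{\partial x_{n+1}}\nabla_{\partial x_{n+1}}\partial x_\alpha,\partial x_\beta\rangle.
\]
Then $W_2=\tfrac12 W''(0)=\tfrac12\big(U''(0)-2W'(0)G'(0)-W(0)G''(0)\big)$, with all ingredients already tabulated in Lemmas~\ref{metric expa} and~\ref{covar of coord fields}. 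This packages the frame-nonparallelism corrections into the known $G'(0),G''(0)$ rather than leaving them as ad hoc compositions with $W_0,W_1$. Your suggestion to cross-check via the Riccati equation has the same defect, since $S'(t)=\nabla_\xi S$ is again a covariant derivative, not the $t$-derivative of the component matrix.
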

\begin{proof}
Put $u_{\alpha\beta}:=\langle\nabla_{\partial x_{\alpha}}\partial
x_{n+1},\partial x_{\beta}\rangle$ and
$U(t):=(u_{\alpha\beta})|_{\eta_{\nu}(t)}$. Then $U(t)=W(t)G(t),$
and thus
\begin{equation*}
U''(0)=W''(0)G(0)+2W'(0)G'(0)+W(0)G''(0),
\end{equation*}
where $G(t)$ is the
matrix in Lemma \ref{metric expa} and so $G(0)=I$,
\begin{equation}\label{G0}
G'(0)=\left(\begin{array}{cc|c}-2S_{\nu}&0&0\\
 0&0&0\\\hline 0&0&0\end{array}\right),\quad
 G''(0)=2\left(\begin{array}{cc|c}S_{\nu}^2-K_{\nu}^{\top}&-\frac{2}{3}B_{\nu}&0\\
-\frac{2}{3}B_{\nu}^t&-\frac{1}{3}K_{\nu}^{\bot}&0\\
\hline 0&0&0\end{array}\right),
\end{equation}
and by Lemma \ref{covar of coord fields},
\begin{equation}\label{W0}
W(0)=\left(\begin{array}{cc|c}-S_{\nu}&0&0\\
 0&0&0\\\hline 0&0&0\end{array}\right),\quad W'(0)=\left(\begin{array}{cc|c}-S_{\nu}^2-K_{\nu}^{\top}&-B_{\nu}&0\\
-\frac{1}{3}B_{\nu}^t&-\frac{1}{3}K_{\nu}^{\bot}&0\\
\hline 0&0&0\end{array}\right).
\end{equation}
On the other hand,
\begin{eqnarray}
u_{\alpha\beta}''(0)&=&\nu\nu_t\langle\nabla_{\partial
x_{\alpha}}\partial x_{n+1},\partial x_{\beta}\rangle=\nu\partial
x_{n+1}\langle\nabla_{\partial
x_{\alpha}}\partial x_{n+1},\partial x_{\beta}\rangle\nonumber\\
&=&\langle\nabla_{\nu}\partial
x_{\alpha},\nabla_{\nu}\nabla_{\partial x_{n+1}}\partial
x_{\beta}\rangle+2\langle \nabla_{\nu}\nabla_{\partial
x_{n+1}}\partial x_{\alpha},\nabla_{\nu}\partial
x_{\beta}\rangle\nonumber\\&&+\langle \nabla_{\nu}\nabla_{\partial
x_{n+1}}\nabla_{\partial x_{n+1}}\partial x_{\alpha},\partial
x_{\beta}\rangle,\nonumber
\end{eqnarray}
then by Lemma \ref{3-cov}, we can get
\begin{equation*}
U''(0)=\left(\begin{array}{cc|c}-\mathcal {K}_{\nu}^{\top}+2K_{\nu}^{\top}S_{\nu}+2S_{\nu}K_{\nu}^{\top}&-\mathcal {B}_{\nu}+\frac{4}{3}S_{\nu}B_{\nu}&0\\
-\frac{1}{2}\mathcal {B}_{\nu}^t+\frac{2}{3}B_{\nu}^tS_{\nu}&-\frac{1}{2}\mathcal {K}_{\nu}^{\bot}&0\\
\hline 0&0&0\end{array}\right).
\end{equation*}
Combining the above formulae, we can get the required formula for
$W_2$ by the following
\begin{equation*}
W_2=\frac{1}{2}W''(0)=\frac{1}{2}\Big(U''(0)-2W'(0)G'(0)-W(0)G''(0)\Big).
\end{equation*}
\end{proof}

\begin{cor}
Let $S_3$ be the coefficient matrix of $t^2$ in (\ref{shape power
series}). Then
\begin{equation}\label{shape S3}
S_3=\left(\begin{array}{cc|c}\frac{1}{2}\mathcal {K}_{\nu}^{\top}+
S_{\nu}^3+K_{\nu}^{\top}S_{\nu}&\frac{1}{2}\mathcal {B}_{\nu}&0\\
\frac{1}{4}\mathcal {B}_{\nu}^t+\frac{1}{3}B_{\nu}^tS_{\nu}&\frac{1}{4}\mathcal {K}_{\nu}^{\bot}&0\\
\hline 0&0&0\end{array}\right).
\end{equation}
\end{cor}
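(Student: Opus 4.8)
The plan is to bypass essentially all fresh computation by reducing the statement to Lemma \ref{W2}: I would first establish the coefficientwise identity $S_r=-W_{r-1}$ for $r\ge1$ relating the expansion (\ref{shape power series}) of $S$ to the matrix $W(t)$ of Lemma \ref{covar of coord fields}, and then read off $S_3=-W_2$ and substitute. Since Lemmas \ref{metric expa}--\ref{W2} have already carried out the analytic work of expanding the metric, the connection coefficients, and $W_2$ itself, the Corollary should then follow almost formally.

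To prove $S_r=-W_{r-1}$, I would start from the definition (\ref{shape def}), $S(\partial x_\alpha)=-\nabla_{\partial x_\alpha}\xi$, and compare with $W$, which records $\nabla_{\partial x_\alpha}\partial x_{n+1}=\sum_\beta w_{\alpha\beta}\,\partial x_\beta$. Along the normal geodesic $\eta_\nu(t)=\exp_p(t\nu)$ one has $x_a=0$ ($a\le m$), $x_i=0$ ($m+1\le i\le n$) and $x_{n+1}=\sigma=t$, so the Generalized Gauss Lemma (\ref{Gauss lemma}) gives $\xi=\sum_i(x_i/\sigma)\,\partial x_i$, the sum running over the normal indices $i\in\{m+1,\dots,n+1\}$. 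Differentiating and evaluating at $\eta_\nu(t)$, the coefficient derivatives $\partial x_\alpha(x_i/\sigma)$ all vanish except for the single term $(1/t)\delta_{\alpha i}$ occurring when $m+1\le\alpha\le n$, while the remaining sum $\sum_i(x_i/\sigma)\,\nabla_{\partial x_\alpha}\partial x_i$ collapses to $\nabla_{\partial x_\alpha}\partial x_{n+1}$ because $x_i/\sigma=\delta_{i,n+1}$ on the geodesic. This yields the exact pointwise identity
\[
S=-\tfrac1t\,\Pi-W,\qquad \Pi:=\mathrm{diag}(0,I_{n-m},0),
\]
with $I_{n-m}$ occupying the vertical block $\{m+1,\dots,n\}$; matching powers of $t$ against (\ref{shape power series}) then gives $S_0=-\Pi$ and $S_r=-W_{r-1}$ for every $r\ge1$.

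With this identity the remainder is bookkeeping: $S_1=-W_0$ and $S_2=-W_1$ reproduce the matrices $S_1,S_2$ displayed after (\ref{shape power series}), which is a reassuring check on the signs and on the factors $\tfrac13$, and $r=3$ gives $S_3=-W_2$, into which I substitute the expression from Lemma \ref{W2} to obtain exactly (\ref{shape S3}). I expect the only genuinely delicate point to be the evaluation of $\partial x_\alpha(x_i/\sigma)$ along the geodesic: one must check that the term coming from $\partial x_\alpha(\sigma)$ cancels for $\alpha=n+1$ (preserving $S(\xi)=0$) yet produces precisely the singular $-\tfrac1t\Pi$ on the vertical block, so that the entire regular part is identified with $-W$. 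Everything past that is immediate from the preceding lemmas.
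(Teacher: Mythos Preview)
Your proposal is correct and follows essentially the same route as the paper: both compute $\nabla_{\partial x_\alpha}\xi$ along $\eta_\nu(t)$ via the Generalized Gauss Lemma, obtain the pointwise identity $S=-\tfrac{1}{t}\Pi-W$ (the paper writes this out row by row rather than as a single matrix equation), and then invoke Lemma~\ref{covar of coord fields} and Lemma~\ref{W2} to read off $S_3=-W_2$. Your packaging of the relation as $S_r=-W_{r-1}$ for all $r\ge1$ is a mildly cleaner way to say it, but the content is identical.
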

\begin{proof}
It follows from (\ref{shape def}), (\ref{Gauss lemma}) and
(\ref{patial xn+1}) that at the point $\eta_{\nu}(t)=exp_p(t\nu)\in
M_t$,
\begin{eqnarray}
&&-S(\partial x_a)=\nabla_{\partial x_a}\xi=\nabla_{\partial
x_a}\Big(\sum_{j}\frac{x_j}{\sigma}\partial
x_j\Big)=\sum_{j}\frac{x_j}{\sigma}\nabla_{\partial x_a}\partial
x_j=\nabla_{\partial x_a}\partial x_{n+1};\nonumber\\
&&-S(\partial x_i)=\nabla_{\partial x_i}\xi=\nabla_{\partial
x_i}\Big(\sum_{j}\frac{x_j}{\sigma}\partial
x_j\Big)=\sum_{j}\partial x_i\Big(\frac{x_j}{\sigma}\Big)\partial
x_j+\sum_{j}\frac{x_j}{\sigma}\nabla_{\partial x_i}\partial
x_j\nonumber\\&&\quad\quad\quad\quad=\frac{1}{t}\partial
x_i-\frac{1}{t}\delta_{i~n+1}~\partial x_{n+1}+\nabla_{\partial
x_i}\partial x_{n+1},\nonumber
\end{eqnarray}
which, together with Lemma \ref{covar of coord fields} and Lemma
\ref{W2}, gives the required formula for $S_3$ immediately.
\end{proof}

Finally we conclude this section by the following expansion formula
for the Jacobi operator $K_{\xi}$.
\begin{cor}
At the point $\eta_{\nu}(t)=exp_p(t\nu)\in M_t$, we have
\begin{equation*}
K_{\xi}=K_{\nu_t}=K_{\nu}+t\left(\begin{array}{cc|c}\mathcal {K}_{\nu}^{\top}
-S_{\nu}K_{\nu}^{\top}+K_{\nu}^{\top}S_{\nu}&\mathcal {B}_{\nu}-S_{\nu}B_{\nu}&0\\
\mathcal {B}_{\nu}^t+B_{\nu}^tS_{\nu}&\mathcal {K}_{\nu}^{\bot}&0\\
\hline 0&0&0\end{array}\right)+\mathcal {O}(t^2).
\end{equation*}
\end{cor}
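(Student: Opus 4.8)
The plan is to express the Jacobi operator $K_{\xi}$ along the normal geodesic $\eta_{\nu}(t)=exp_p(t\nu)$ as a (generally non-symmetric) matrix $M(t)=(M_{\alpha\beta}(t))$ in the coordinate frame, exactly as was done for the shape operator in (\ref{shape expansion}), via $K_{\xi}(\partial x_{\alpha})=\sum_{\beta}M_{\alpha\beta}\partial x_{\beta}$. Since at $p$ the coordinate frame is orthonormal and $\xi|_p=\nu$, formula (\ref{Jacobi matrix}) gives $M(0)=K_{\nu}$. Thus the whole problem reduces to computing the first Taylor coefficient $M'(0)$, after which the $\mathcal{O}(t^2)$ term is automatic.

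To find $M'(0)$ I would first record the elementary bookkeeping fact that for a $(1,1)$-tensor $T$ with matrix $M$ in a frame $\{\partial x_{\alpha}\}$ whose connection matrix along $\nu$ is $W(0)$, i.e. $\nabla_{\nu}\partial x_{\alpha}=\sum_{\beta}W(0)_{\alpha\beta}\partial x_{\beta}$, the matrix of $\nabla_{\nu}T$ at $p$ equals $M'(0)+[M,W(0)]$; hence $M'(0)=(\text{matrix of }\nabla_{\nu}K_{\xi})-[K_{\nu},W(0)]$. The connection matrix is read off from Lemma \ref{covar of coord fields}: since coordinate fields commute, $\nabla_{\nu}\partial x_{\alpha}=\nabla_{\partial x_{n+1}}\partial x_{\alpha}=\nabla_{\partial x_{\alpha}}\partial x_{n+1}$, whose value at $p$ is the matrix $W(0)=\mathrm{diag}(-S_{\nu},0,0)$ of that lemma.

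It then remains to identify the matrix of $\nabla_{\nu}K_{\xi}$ at $p$. Along the geodesic $\nabla_{\xi}\xi=0$, so by the remark following (\ref{defn-cov-Jacobi}) one has $\nabla_{\nu}K_{\xi}=\mathcal{K}_{\xi}$ as a $(1,1)$-tensor, whose matrix at $p$ is exactly $\mathcal{K}_{\nu}$ of (\ref{cov Jacobi matrix}) by orthonormality of the frame at $p$. Concretely this is the content of Lemma \ref{3-cov}: applying the product rule for $\nabla R$ to $K_{\xi}(\partial x_{\alpha})=R_{\xi\partial x_{\alpha}}\xi$ and using $\nabla_{\nu}\xi|_p=0$ kills the terms containing $\nabla_{\nu}\xi$ and yields $\nabla_{\nu}(R_{\xi\partial x_{\alpha}}\xi)|_p=\mathcal{K}_{\nu}(\partial x_{\alpha})+K_{\nu}(\nabla_{\nu}\partial x_{\alpha})$, where the second summand is precisely the $T(\nabla_{\nu}\partial x_{\alpha})$ subtracted off in the tensorial derivative. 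Therefore $M'(0)=\mathcal{K}_{\nu}-[K_{\nu},W(0)]=\mathcal{K}_{\nu}+W(0)K_{\nu}-K_{\nu}W(0)$, and a routine block multiplication with $W(0)=\mathrm{diag}(-S_{\nu},0,0)$ reproduces the stated first-order matrix, the off-diagonal blocks being $\mathcal{B}_{\nu}-S_{\nu}B_{\nu}$ and $\mathcal{B}_{\nu}^t+B_{\nu}^tS_{\nu}$.

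I expect the only genuinely delicate point to be the non-parallelism of the coordinate frame: because $\{\partial x_{\alpha}\}$ fails to be orthonormal for $t>0$, the matrix of the self-dual operator $K_{\xi}$ need not be symmetric, and it is exactly the commutator $[K_{\nu},W(0)]$ that produces the asymmetry between the two off-diagonal blocks. Keeping the operator-versus-bilinear-form distinction straight, together with the paper's index convention $S(\partial x_{\alpha})=\sum_{\beta}S_{\alpha\beta}\partial x_{\beta}$, is what makes all the signs fall into place.
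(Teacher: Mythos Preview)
Your argument is correct and lands on exactly the same formula as the paper, but the packaging is slightly different. The paper differentiates the \emph{bilinear-form} matrix $\langle K_{\xi}(\partial x_{\alpha}),\partial x_{\beta}\rangle = M(t)G(t)$, obtaining $K_1=\mathcal{K}_{\nu}+W(0)K_{\nu}+K_{\nu}W(0)^{t}-K_{\nu}G'(0)$, and then substitutes (\ref{G0}), (\ref{W0}); you instead differentiate the $(1,1)$-tensor matrix directly and pick up the frame correction as the commutator $-[K_{\nu},W(0)]$, arriving at $K_1=\mathcal{K}_{\nu}+W(0)K_{\nu}-K_{\nu}W(0)$. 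Since $W(0)=W(0)^{t}=\mathrm{diag}(-S_{\nu},0,0)$ and $G'(0)=2W(0)$, these two expressions are identical, so this is really the same computation seen from two sides. Your route is a touch more economical (it needs only Lemma~\ref{covar of coord fields} and not the metric expansion of Lemma~\ref{metric expa}); the paper's route has the virtue of keeping the self-duality of $K_{\xi}$ visible at each step via the symmetric matrix $MG$.
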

\begin{proof}
Obviously it suffices to verify the coefficient matrix, say $K_1$,
of $t$ in this expansion formula for $K_{\xi}$. Firstly by
(\ref{patial xn+1}) and (\ref{defn-cov-Jacobi}), we know that
$\nabla_{\xi}\xi\equiv0$ and thus $\mathcal
{K}_{\xi}=\nabla_{\xi}K_{\xi}$. Then by the Taylor expansion
formula, we calculate the coefficient matrix $K_1$ as follows:\vskip
0.2 cm $\nu\langle K_{\xi}(\partial x_{\alpha}),~\partial
x_{\beta}\rangle=\langle \nabla_{\xi}(K_{\xi}(\partial
x_{\alpha})),\partial x_{\beta}\rangle|_p+\langle K_{\xi}(\partial
x_{\alpha}),\nabla_{\xi}\partial x_{\beta}\rangle|_p$\vskip 0.2 cm
\quad \quad\quad\quad\quad\quad\quad\quad $=\langle
(\nabla_{\xi}K_{\xi})(\partial
x_{\alpha})+K_{\xi}(\nabla_{\xi}\partial x_{\alpha}),\partial
x_{\beta}\rangle|_p+\langle K_{\xi}(\partial
x_{\alpha}),\nabla_{\xi}\partial x_{\beta}\rangle|_p$\vskip 0.2 cm
\quad \quad\quad\quad\quad\quad\quad\quad$=\langle \mathcal
{K}_{\xi}(\partial x_{\alpha}),\partial x_{\beta}\rangle|_p+\langle
\nabla_{\xi}\partial x_{\alpha},K_{\xi}(\partial
x_{\beta})\rangle|_p+\langle \nabla_{\xi}\partial
x_{\beta},K_{\xi}(\partial x_{\alpha})\rangle|_p$,\vskip 0.2 cm or
in matrix form,\vskip 0.2 cm  $(\nu\langle K_{\xi}(\partial
x_{\alpha}),~\partial x_{\beta}\rangle)=\mathcal
{K}_{\nu}+U(0)K_{\nu}+K_{\nu}U(0)^t$,\vskip 0.2 cm where $U(0)=W(0)$
as in Lemma \ref{W2}; on the other hand,\vskip 0.2 cm $(\nu\langle
K_{\xi}(\partial x_{\alpha}),~\partial
x_{\beta}\rangle)=\frac{d}{dt}\Big|_{t=0}\Big(K_{\nu_t}G(t)\Big)=K_1+K_{\nu}G'(0)$;\vskip
0.2 cm and therefore, \vskip 0.2 cm $K_1=\mathcal
{K}_{\nu}+W(0)K_{\nu}+K_{\nu}W(0)^t-K_{\nu}G'(0)$,\vskip 0.2 cm
which gives the required formula by using (\ref{Jacobi matrix}),
(\ref{cov Jacobi matrix}), (\ref{G0}) and (\ref{W0}).
\end{proof}

\section{Hypersurface case}
In this section, we deal with the hypersurface case in our subject
by proving Theorem \ref{Thm-Hyper-space form} and Theorem
\ref{Thm-Hyper-rank one}. Throughout this paper, unless stated
otherwise, notations will be consistent with those in previous
sections.

Firstly we establish a lemma on algebraic geometry which will be
useful in the proof of the theorems.
\begin{lem}\label{AG lem}For each $m,n\geq1$,
define polynomials $P_k\in\mathbb{C}[x_1,\cdots,x_n]$ by
$$P_k:=\rho_{k}(x_1,\cdots,x_n)+\widetilde{P}_{k-1}(x_1,\cdots,x_n),\quad
for ~~k=m,m+1,\cdots,m+n-1,$$ where $\rho_{k}$ is the $k$-th power
sum polynomial, $\widetilde{P}_{k-1}$ is an arbitrary polynomial of
degree less than $k$. Then $P_m,P_{m+1},\cdots,P_{m+n-1}$ form a
regular sequence in $\mathbb{C}[x_1,\cdots,x_n]$. Consequently, the
dimension of each variety $V_k$ in $\mathbb{C}^n$ defined by
$P_{m}=P_{m+1}=\cdots=P_{m+k-1}=0$ is less than or equal to $n-k$
for $k=1,\cdots,n$. In particular, $V_{n}$ is a finite subset of
$\mathbb{C}^n$.
\end{lem}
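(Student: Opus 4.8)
The plan is to prove the regular sequence property, from which the dimension bounds follow by the standard fact that each element of a regular sequence cuts down the dimension of the vanishing locus by exactly one. Recall that $P_m,\dots,P_{m+n-1}$ form a regular sequence in the polynomial ring $\mathbb{C}[x_1,\dots,x_n]$ if and only if (since this ring is Cohen--Macaulay and the $P_k$ are homogeneous only up to lower-order terms) the ideal they generate has height $n$, equivalently the affine variety $V_n=\{P_m=\dots=P_{m+n-1}=0\}$ they cut out is zero-dimensional (a finite set). So the heart of the matter is to show that the only common solution structure is finite; the intermediate dimension bounds $\dim V_k\le n-k$ are then automatic from the regular sequence property via Krull's principal ideal theorem applied successively.

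To control the finiteness, the key idea I would use is a homogenization / leading-term argument. Each $P_k=\rho_k+\widetilde{P}_{k-1}$ has top-degree part exactly $\rho_k(x_1,\dots,x_n)=\sum_i x_i^k$, since $\widetilde{P}_{k-1}$ has degree strictly less than $k$. The crucial point is that a system of polynomials with no common zero at infinity (i.e. whose leading forms have only the origin as common projective zero) automatically forms a regular sequence and cuts out a finite affine variety. Thus I would reduce to proving the purely homogeneous statement: the power sums $\rho_m,\rho_{m+1},\dots,\rho_{m+n-1}$ have $\{x_1=\dots=x_n=0\}$ as their only common zero in $\mathbb{C}^n$. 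First I would establish this leading-form statement, and then invoke the standard result (see e.g. the theory of the $\mathbb{Z}$-graded or filtered degeneration to the associated graded) that if the leading forms form a regular sequence, so do the original polynomials.

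The substantive step is therefore the claim that $\rho_m=\rho_{m+1}=\dots=\rho_{m+n-1}=0$ forces $x=0$. Here I would argue as follows. Suppose $x=(x_1,\dots,x_n)\ne 0$ is a common zero. Consider the power sums $p_k(x)=\sum_i x_i^k$ as a sequence in $k$; these satisfy a linear recurrence of order $r$ (where $r$ is the number of distinct nonzero values among the $x_i$) given by the Newton relations, with characteristic polynomial $\prod(T-x_i)$ over the distinct nonzero roots. If $n$ consecutive power sums $p_m,\dots,p_{m+n-1}$ all vanish, then (since $r\le n$) a full period of $r\ge 1$ consecutive terms of this recurrence vanishes, which forces every subsequent $p_k$ and, running the recurrence backward, every $p_k$ to vanish; but the Vandermonde-type nonvanishing of $\prod_{i<j}(x_i-x_j)$ together with the nonzero multiplicities shows $p_k$ cannot be identically zero unless all $x_i=0$. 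Concretely, vanishing of $p_k$ for $r$ consecutive values of $k$ gives a homogeneous linear system $V c=0$ where $V$ is an invertible generalized Vandermonde matrix in the distinct nonzero $x_i$ and $c$ is the vector of multiplicities, forcing all multiplicities to be zero, a contradiction.

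The main obstacle I anticipate is the careful bookkeeping in this last step when the $x_i$ are not distinct: one must pass from the $n$ variables to the list of \emph{distinct} nonzero values with their multiplicities, verify that $r\le n$ consecutive vanishing power sums suffice, and handle the generalized Vandermonde invertibility cleanly. The transfer from leading forms to the full polynomials $P_k$ (the degeneration-to-associated-graded argument) is standard but should be stated precisely, invoking that regularity of a sequence is preserved when passing up from $\mathrm{gr}$; I would cite this as the commutative-algebra input rather than reprove it. Once finiteness of $V_n$ and the regular sequence property are in hand, the dimension bounds $\dim V_k\le n-k$ follow immediately.
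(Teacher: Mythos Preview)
Your proposal is correct and follows the same overall architecture as the paper: reduce the regularity of $P_m,\dots,P_{m+n-1}$ to that of their top-degree parts $\rho_m,\dots,\rho_{m+n-1}$, then use Cohen--Macaulayness of the polynomial ring to pass between ``regular sequence'' and ``zero-dimensional common vanishing locus.'' The two executions differ in where they put the work. The paper cites the result of Conca--Krattenthaler--Watanabe \cite{CKW} for the fact that the consecutive power sums $\rho_m,\dots,\rho_{m+n-1}$ form a regular sequence, and then writes out by hand the passage from leading forms to the inhomogeneous $P_k$ via a minimal-syzygy / degree-reduction argument. You do the reverse: you prove the power-sum statement directly (your Vandermonde argument---writing $\sum_j(c_jy_j^m)y_j^s=0$ for $s=0,\dots,r-1$ and using invertibility of the Vandermonde in the distinct nonzero $y_j$---is clean and correct), and you invoke the standard ``initial forms regular $\Rightarrow$ original sequence regular'' fact as commutative-algebra input. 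Your route has the advantage of being self-contained for the substantive step (no need to cite \cite{CKW}); the paper's route has the advantage of making the filtered-to-graded transfer explicit rather than quoting it. Either way the argument is complete; just be sure, when you write it up, to state precisely the transfer lemma you are using (e.g.\ that in a Cohen--Macaulay graded ring, if the leading forms of $n$ elements cut out only the origin then the elements themselves form a regular sequence), since that is the step a reader might otherwise question.
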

\begin{proof}
The proof is similar to that of Lemma 4.4 in \cite{GTY}. For
completeness, we repeat it as follows.

 Firstly recall (cf. \cite{E},
\cite{Mat}) that a sequence $r_1,\cdots,r_k$ in a commutative ring
$\mathcal {R}$ with identity is called a \emph{regular sequence} if
$(1)$ the ideal $(r_1,\cdots,r_k)\neq \mathcal {R}$; $(2) $ $r_1$ is
not a zero divisor in $\mathcal {R}$; and $(3)$ $r_{i+1}$ is not a
zero divisor in the quotient ring $\mathcal {R}/(r_1,\cdots,r_i)$
for $i=1,\cdots,k-1$.

Now we will work on the polynomial ring $\mathcal
{R}=\mathbb{C}[x_1,\cdots,x_n]$. Obviously, it is a
\emph{Cohen-Macaulay} ring, possessing the property that
$dim(\mathcal {R}/(r_1,\cdots,r_k))=n-k$ for a regular sequence
$r_1,\cdots,r_k$ in  $\mathcal {R}$. Meanwhile, we know that
$dim(V_k)=dim(\mathcal {R}/I(V_k))$, where $I(V_k)\supset
(r_1,\cdots,r_k)$ is the ideal of the variety $$V_k:=\{x\in
\mathbb{C}^n|r_1(x)=\cdots=r_k(x)=0\}.$$ Therefore, when
$r_1,\cdots,r_n$ form a regular sequence, $dim(V_k)\leq n-k$ for
$k=1,\cdots,n$. In particular, $dim(V_n)=0$. The last assertion in
the lemma is due to the facts that every variety in $\mathbb{C}^n$
can be expressed as a union of finite irreducible varieties and that
a zero-dimensional irreducible variety in $\mathbb{C}^n$ is just a
point. So it suffices to show that the polynomials
$P_m,P_{m+1}\cdots,P_{m+n-1}$ form a regular sequence in $\mathcal
{R}$.

Obviously, $P_m$ forms a regular sequence in $\mathcal {R}$. Suppose
that $P_m,P_{m+1},\cdots,P_{m+n-1}$ do not form a regular sequence,
there exists some $k$ with $1\leq k<n$ such that $P_{m+k}$ is a zero
divisor modulo $(P_m,\cdots,P_{m+k-1})$ in $\mathcal {R}$. Then we
may choose a relation of minimal degree of the form
\begin{equation}\label{minimal relation}
f_mP_m+f_{m+1}P_{m+1}+\cdots+f_{m+k}P_{m+k}=0,
\end{equation}
 where
$f_m,\cdots,f_{m+k}$ are polynomials of minimal degrees modulo
$(P_m,\cdots,P_{m+k-1})$. Denote by $D(>0)$ the maximal degree of
$f_kP_k$'s. Let $f_{i_1}P_{i_1},\cdots,f_{i_r}P_{i_r}$ be those of
maximal degree $D$ for some $m\leq i_1<\cdots<i_r\leq m+k$. Then one
can pick out the homogeneous components
$\tilde{f}_{i_1}\rho_{i_1},\cdots,\tilde{f}_{i_r}\rho_{i_r}$ of
maximal degree from them in equation (\ref{minimal relation}) such
that
\begin{equation}\label{maximal degree}
\tilde{f}_{i_1}\rho_{i_1}+\cdots+\tilde{f}_{i_r}\rho_{i_r}=0,
\end{equation}
where $\tilde{f}_{i_1},\cdots,\tilde{f}_{i_r}$ are the homogeneous
components of maximal degrees of $f_{i_1},\cdots,f_{i_r}$,
respectively. Recall a recent result showed in \cite{CKW} that the
power sum polynomials $\rho_m,\rho_{m+1},\cdots,\rho_{m+n-1}$ form a
regular sequence in $\mathcal {R}$. Then by (\ref{maximal degree}),
$r>1$ and
$\tilde{f}_{i_r}\in(\rho_m,\rho_{m+1},\cdots,\rho_{i_r-1})$, which
imply that there exist homogeneous polynomials
$a_m,a_{m+1},\cdots,a_{i_r-1}$ such that
$$\tilde{f}_{i_r}=a_m\rho_m+a_{m+1}\rho_{m+1}+\cdots+a_{i_r-1}\rho_{i_r-1},$$
and therefore,
$$f_{i_r}=a_mP_m+a_{m+1}P_{m+1}+\cdots+a_{i_r-1}P_{i_r-1}+\hat{f}_{i_r}\equiv \hat{f}_{i_r}, \quad mod~~ (P_m,\cdots,P_{m+k-1}),$$
where $\hat{f}_{i_r}$ is a polynomial of degree less than
$D-i_r=deg(f_{i_r})$, which contradicts the original choice of
minimal relation (\ref{minimal relation}).

The proof is now complete.
\end{proof}

Let $M^n$ be a curvature-adapted hypersurface in a real space form
or locally rank one symmetric space $N^{n+1}$. Denote by $M_t$,
$t\in(-\varepsilon,\varepsilon)$, nearby parallel hypersurfaces of
$M_0=M$ and $\nu_t$ the unit normal vector field on $M_t$. As is
well known, a hypersurface in a real space form is always
curvature-adapted as mentioned in the introduction, and moreover,
its parallel hypersurfaces have common principal eigenvectors up to
parallel translations along normal geodesics, which is a nice
property also preserved by a curvature-adapted hypersurface in a
symmetric space in which case parallel hypersurfaces are still
curvature-adapted (cf.\cite{Gr04}). Now in both cases, the Jacobi
operator $K_{\xi}$ of $N$ has constant eigenvalues independent of
the choices of the unit tangent vector $\xi$ and the point of $N$.
Therefore, one can choose the principal orthonormal eigenvectors
$\{e_i(t)|i=1,\cdots,n\}$ of $M_t$ such that they are parallel along
normal geodesics and simultaneously diagonalize the shape operator
$S(t)$ of $M_t$ and the restricted Jacobi operator
$R(t):=K_{\nu_t}|_{\mathcal {T}_{M_t}}$ of $N$ as the following
symmetric matrices:
\begin{equation*}
S(t)=diag(\mu_1(t),\cdots,\mu_n(t)),\quad
R(t)=diag(\kappa_1,\cdots,\kappa_n),
\end{equation*}
where $\mu_i(t)$'s are principal curvature functions of $M_t$,
$\kappa_i\equiv c$ for $M$ in a real space form with constant
sectional curvature $c$, or $\kappa_i\in\{c,4c\}$ for $M$ in a
locally rank one symmetric space with non-constant sectional
curvature. Moreover, since $\nabla_{\nu_t}e_i(t)=0$,
\begin{equation*} S'(t):=\nabla_{\nu_t}S(t)=diag(\mu_1'(t),\cdots,\mu_n'(t)),
\end{equation*}
and thus the Riccati equation (\ref{Riccati eq}) can be written as
\begin{equation}\label{Riccati eq for curv-adapted}
\mu_i'(t)=\mu_i(t)^2+\kappa_i, \quad for~~i=1,\cdots,n.
\end{equation}

 We are now ready to prove the theorems for the hypersurface case in
 our subject.

\textbf{Proof of Theorem \ref{Thm-Hyper-space form} and Theorem
\ref{Thm-Hyper-rank one}.} Recall that the $k$-th order mean
curvature $Q_k(t)$ of $M_t$ is defined by the $k$-th power sum
polynomial of the principal curvatures $\mu_1(t),\cdots,\mu_n(t)$
for any $k\geq1$, \emph{i.e.},
\begin{equation}\label{Qkt}
Q_k(t)=tr\Big(S(t)^k\Big)=\sum_{i=1}^n\mu_i(t)^k=\rho_k(\mu_1(t),\cdots,\mu_n(t)).
\end{equation}
Taking derivative of $Q_k(t)$ with respect to $t$ by applying
(\ref{Riccati eq for curv-adapted}), we get
\begin{equation}\label{Qkt 1deriv}
\frac{1}{k}Q_k'(t)=\sum_{i=1}^n\Big(\mu_i(t)^{k+1}+\kappa_i~\mu_i(t)^{k-1}\Big)=Q_{k+1}+\sum_{i=1}^n\kappa_i~\mu_i(t)^{k-1}.
\end{equation}
Similarly, for any $j\geq1$, taking the $j$-th derivative of
$Q_k(t)$ with respect to $t$ by applying (\ref{Riccati eq for
curv-adapted}), we can get
\begin{equation}\label{Qkt jderiv}
\frac{1}{k(k+1)\cdots(k+j-1)}Q_k^{(j)}(t)=Q_{k+j}+\widehat{P}_{k+j-1}(\mu_1(t),\cdots,\mu_n(t)),
\end{equation}
where $\widehat{P}_{k+j-1}$ is some polynomial of degree less than
$k+j$ with constant coefficients in $n$ variables.

Now assume that for some $k\geq1$, $Q_k(t)$ is constant on $M_t$ for
any $t\in(-\varepsilon,\varepsilon)$ and thus it is a smooth
function depending only on $t$, so are the derived functions
$Q_k^{(j)}(t)$ for all $j\geq1$. Then for any fixed
$t\in(-\varepsilon,\varepsilon)$, (\ref{Qkt}-\ref{Qkt jderiv}) show
that the principal curvatures $(\mu_1(t),\cdots,\mu_n(t))$ of $M_t$
are solutions of the algebraic equations
\begin{equation}\label{Pk}
P_{l}(x_1,\cdots,x_n):=\rho_{l}(x_1,\cdots,x_n)+\widetilde{P}_{l-1}(x_1,\cdots,x_n)=0,\quad
for~~l=k,k+1\cdots,
\end{equation}
where $\rho_l$ is the $l$-th power sum polynomial,
$\widetilde{P}_{l-1}=\widehat{P}_{l-1}-\frac{(k-1)!}{(l-1)!}Q_k^{(l-k)}(t)$
is a polynomial of degree less than $l$ with constant coefficients.
In particular, $(\mu_1(t),\cdots,\mu_n(t))$ belongs to the variety
$V_n$ in $\mathbb{C}^n$ defined by $P_k=P_{k+1}=\cdots=P_{k+n-1}=0$.
Therefore, by Lemma \ref{AG lem}, we know that
$(\mu_1(t),\cdots,\mu_n(t))$ belongs to a finite subset of
$\mathbb{C}^n$ and thus $\mu_i(t)$'s are constant on $M_t$ since
$M_t$ is connected. It means that $M$ has constant principal
curvatures and is totally isoparametric.

Conversely, if $M$ has constant principal curvatures, by the Riccati
equation (\ref{Riccati eq for curv-adapted}), we know immediately
(cf. \cite{GTY}) that $\mu_i(t)$'s are constant on $M_t$ and so each
order mean curvatures are constant on $M_t$.

The proof is now complete. \hfill $\Box$

\section{General Submanifold case}\label{section 4}
In this section, we deal with the general submanifold case in our
subject. Firstly, by using the Taylor expansion formula of the shape
operator obtained in section \ref{sec shape op}, we derive a power
series expansion formula for higher order mean curvatures of tubular
hypersurfaces around a submanifold in a general Riemannian manifold.
Then through some involved calculations and technical treatments of
this formula, we obtain Theorem \ref{Thm-subm-d} and give a proof of
Theorem \ref{Thm-subm-l}.

As in section \ref{sec shape op}, let $M^m$ be a submanifold of a
Riemannian manifold $N^{n+1}$ and $M_t^n$ be the tubular
hypersurface around $M$ of sufficiently small radius
$t\in(0,\varepsilon)$. Since the shape operator $S(t)$ of $M_t$ is
the restriction of the operator $S$ defined in (\ref{shape def}) to
$\mathcal {T}M_t$ and $S(\nu_t)=S(\xi)|_{M_t}=0$, it follows that
$S(t)$ has the same nonzero eigenvalues as $S$. Therefore, the
$k$-th order mean curvature $Q_k(t)$ of $M_t$ can be calculated by
\begin{equation*}
Q_k(t)=tr(S(t)^k)=tr(S^k)=tr(\widehat{S}(t)^k),
\end{equation*}
where $\widehat{S}(t)$ is the left-up $n$ by $n$ submatrix of $S$ in
(\ref{shape expansion}) and by (\ref{shape power series}),
(\ref{shape S3}), at the point $\eta_{\nu}(t)=exp_p(t\nu)\in M_t$
for any $t\in (0,\varepsilon)$, we have the following expansion
formula
\begin{equation}\label{shape bar expansion}
t\widehat{S}(t)=-A_0+\sum_{r=1}^{\infty}A_rt^r,
\end{equation}
where
\begin{eqnarray}
&&A_0=\left(\begin{array}{cc} 0&0\\  0&I
 \end{array}\right),\quad A_2=\left(\begin{array}{cc}S_{\nu}^2+K_{\nu}^{\top}&B_{\nu}\\
 \frac{1}{3}B^t_{\nu}&\frac{1}{3}K_{\nu}^{\bot}\end{array}\right),
\nonumber\\
&&A_1=\left(\begin{array}{cc} S_{\nu}&0\\  0&0
 \end{array}\right),\quad
A_3=\left(\begin{array}{cc}\frac{1}{2}\mathcal {K}_{\nu}^{\top}+
S_{\nu}^3+K_{\nu}^{\top}S_{\nu}&\frac{1}{2}\mathcal {B}_{\nu}\\
 \frac{1}{4}\mathcal
{B}_{\nu}^t+\frac{1}{3}B_{\nu}^tS_{\nu}&\frac{1}{4}\mathcal
{K}_{\nu}^{\bot}\end{array}\right),\nonumber
\end{eqnarray}
and $A_r$, $r\geq4$, are $n$ by $n$ matrices independent of $t$.

 Put
$\Upsilon_i(t):=t^iQ_i(t)=tr\Big((t\widehat{S}(t))^i\Big)=\displaystyle\sum_{r=0}^{\infty}\Upsilon_{ir}t^r,$
$i\geq1$. Then by comparing the coefficient of $t^r$ in the extended
formula for $\Upsilon_i(t)$ with (\ref{shape bar expansion})
substituted, we get
\begin{equation}\label{Upsilon ir}
\Upsilon_{ir}=\sum_{\sigma\in\mathscr{P}_{ir}}(-1)^{\sigma_0}\sum_{\tau\in\mathscr{S}_i(\sigma)}tr(A_{\tau_1}A_{\tau_2}\cdots
A_{\tau_i}), \quad for~~i\geq1,~~r\geq0,
\end{equation}
where
\begin{equation*}
\mathscr{P}_{ir}:=\Big\{\sigma=(\sigma_0,\sigma_1,\cdots,\sigma_r)\in
\mathbb{Z}^{r+1}~\Big|~\sum_{s=0}^r\sigma_s=i,~\sum_{s=0}^rs\sigma_s=r,~\sigma_s\geq0\Big\};
\end{equation*}
and for
$\sigma=(\sigma_0,\sigma_1,\cdots,\sigma_r)\in\mathscr{P}_{ir}$,
$\Sigma(\sigma):=\{s~|~0\leq s\leq r,~~\sigma_s>0\}$,
\begin{equation*}
\mathscr{S}_i(\sigma):=\Big\{\tau=(\tau_1,\cdots,\tau_i)\in
\mathbb{Z}^{i}~|~\forall~ s\in\Sigma(\sigma), \exists~
\sigma_s~elements~ of~ \tau ~equal~s.\Big\}.
\end{equation*}
Obviously,
\begin{equation}\label{Upsilon 0,1}
\begin{array}{lll}
\Upsilon_{i0}=(-1)^itr(A_0^i)=(-1)^i(n-m),&
\Upsilon_{i1}=\delta_{i1}tr(A_1),& for~~i\geq1,\\
\Upsilon_{1r}=tr(A_r),&& for~~r\geq1.
\end{array}
\end{equation}
From now on, we assume $i\geq2,~r\geq2$ and put
$$d(i,r):=\min\Big\{[\frac{i}{2}],[\frac{r}{2}]\Big\},\quad
D(i,r):=\min\Big\{i-1,[\frac{r}{2}]\Big\}.$$ Note that
$A_0A_1=A_1A_0=0$ and $tr(CD)=tr(DC)$, then
$tr(A_{\tau_1}A_{\tau_2}\cdots A_{\tau_i})=0$ if $0$ and $1$ occur
in some successive indices $\tau_j,\tau_{j+1}$ ($j~mod~i$). Since
$A_0^2=A_0$, we can reduce the sequence in non-vanishing
$tr(A_{\tau_1}A_{\tau_2}\cdots A_{\tau_i})$s such that $A_0$ occurs
separately. Moreover, one can see that $A_0$ occurs separately at
most $d(i,r)$ times with some $A_s$ or $A_sA_1^{\iota_s}A_{s'}$
($s,s'\geq2$, $\iota_s\geq1$) between, in fact, each non-vanishing
$tr(A_{\tau_1}A_{\tau_2}\cdots A_{\tau_i})$,
$\tau\in\mathscr{S}_i(\sigma),~
\sigma\in\mathscr{P}_{ir}$~with~$\sigma_0>0$ can be written as
\begin{equation}\label{trace mon.}
tr(A_0^{\lambda_0}\cdot\widetilde{A}_{s_1}A_0^{\lambda_1}\cdot\widetilde{A}_{s_2}A_0^{\lambda_2}
\cdot\cdots\cdot\widetilde{A}_{s_c}A_0^{\lambda_{c}}),
\end{equation}
where $\widetilde{A}_{s}=A_{s}$ or $A_sA_1^{\iota_s}A_{s'}$ with
$s,s'\geq2$, $\iota_s\geq1$, the sum of $\lambda_s$ equals
$\sigma_0$, the sum of $\iota_s$ equals $\sigma_1$, and $1\leq c\leq
D(i,r)$. According to these, we will refine the summation
(\ref{Upsilon ir}) or actually the index sets $\mathscr{P}_{ir}$ and
$\mathscr{S}_i(\sigma)$ as follows.

For $0\leq a\leq i-1$, denote by $\mathscr{T}_{ir}( a)$ the set of
all non-vanishing $tr(A_{\tau_1}A_{\tau_2}\cdots A_{\tau_i})$s,
$\tau\in\mathscr{S}_i(\sigma)$, $\sigma\in\mathscr{P}_{ir}$ with
$\sigma_0=a$, where the elements with different indices are looked
as different though they may have the same value. For $1\leq c\leq
D(i,r)$, put
\begin{equation*}
\mathcal {A}_{ir}^c:=\Big\{a\in\mathbb{Z}^{+}~|~\exists~
elements~in~\mathscr{T}_{ir}( a)~ of~the~form~(\ref{trace
mon.})\Big\}.
\end{equation*}
Then by straightforward calculations, we get
\begin{equation*}
\mathcal {A}_{ir}^c=\begin{cases}
\{1\},& i=2, ~c=1,~r\geq2,\\
\{i-c\},& i\geq3,~2c\leq r\leq2c+2,\\
\{a~|~i-r+c+1\leq a\leq i-c-2,~ a\geq1\}\cup \{i-c\},& i\geq3,~r\geq
2c+3.
\end{cases}
\end{equation*}
It follows immediately from the definitions that $\mathscr{T}_{ir}(
a)$ is empty if $a>0$ is not in $\mathcal {A}_{ir}^c$ for any $1\leq
c\leq D(i,r)$. For example, $\mathscr{T}_{33}(1)$ is empty since now
each $tr(A_{\tau_1}A_{\tau_2}A_{\tau_3})$ equals
$tr(A_0A_1A_2)=tr(A_1A_0A_2)=0$. On the other hand, putting
\begin{eqnarray}
&&\Lambda^c(a):=\Big\{\lambda=(\lambda_0,\lambda_1,\cdots,\lambda_c)\in\mathbb{Z}^{c+1}~|~
\sum_{s=0}^c\lambda_s= a,~\lambda_s\geq0\Big\},\nonumber\\
&&\mathscr{T}_{ir}(\lambda):=\Big\{\omega\in\mathscr{T}_{ir}(
a)~|~\omega ~is~of~the~form~ (\ref{trace
mon.})~with~(\lambda_0,\cdots,\lambda_c)=\lambda\Big\},~~
for~~\lambda\in\Lambda^c( a),\nonumber
\end{eqnarray}
we can divide $\mathscr{T}_{ir}(a)$ ($a>0$) into subsets
$\mathscr{T}_{ir}(\lambda)$, $\lambda\in\Lambda^c( a)$, $1\leq c\leq
D(i,r)$. Note that each $\mathscr{T}_{ir}(\lambda)$ should be empty
if $\mathscr{T}_{ir}(a)$ is empty. Furthermore, for $1\leq b\leq c$,
putting
\begin{eqnarray}
&&\Lambda^c_b:=\Big\{\mu=(\mu_1,\cdots,\mu_b)~|~1\leq
\mu_1<\cdots<\mu_b< c\Big\},\nonumber\\
&&\overline{\Lambda}^c_b:=\Big\{\mu=(\mu_1,\cdots,\mu_b)~|~1\leq
\mu_1<\cdots<\mu_{b-1}<\mu_b=c\Big\},\nonumber
\end{eqnarray}
and for $\mu\in\Lambda^c_b$,
\begin{equation*}
\Lambda^c( a,\mu):=\Big\{\lambda\in\Lambda^c( a)~|~
\sum_{s=1}^b\lambda_{\mu_s}= a,~\lambda_{\mu_s}\geq1\Big\},
\end{equation*}
and for $\mu\in\overline{\Lambda}^c_b$,
\begin{equation*}
\Lambda^c( a,\mu):=\Big\{\lambda\in\Lambda^c( a)~|~
\lambda_0+\lambda_c+\sum_{s=1}^{b-1}\lambda_{\mu_s}=
a,~\lambda_0+\lambda_c\geq1,~\lambda_{\mu_s}\geq1~for~s<b\Big\},
\end{equation*}
we can divide the index set $\Lambda^c(a)$ into subsets $\Lambda^c(
a,\mu)$, $\mu\in\Lambda^c_b$ or $\overline{\Lambda}^c_b$, $1\leq
b\leq c$. Since the number of elements of $\Lambda^c( a,\mu)$ is
independent of the choices of $\mu\in\Lambda^c_b$ (resp.
$\mu\in\overline{\Lambda}^c_b$) and $c$, we denote it by
$\Theta(a,b)$ (resp. $\overline{\Theta}( a,b)$) which would be zero
for $b> a$. In fact, by a detailed study of the definitions, it
turns out that the $2$-parameter function $\Theta$ satisfies the
inductive relation
\begin{equation}\label{inductive Theta}
\Theta( a+1,b)-\Theta( a,b)=\Theta( a,b-1),
\end{equation}
with initial conditions $\Theta( a,b)=0$ for $ a<b$ or $b<0$,
$\Theta( a,1)=1$ ($\Theta( a,0):\equiv0$) for $ a\geq1$, and so does
the $2$-parameter function $\overline{\Theta}$ with initial
conditions $\overline{\Theta}( a,b)=0$ for $ a<b$ or $b<0$,
$\overline{\Theta}( a,1)= a+1$ ($\overline{\Theta}( a,0):\equiv 1$)
for $ a\geq1$. Notice that for each $\mu\in \Lambda^c_b$ or
$\overline{\Lambda}^c_b$, respectively, each element $\lambda$ of
the index subset $\Lambda^c( a,\mu)$ corresponds to the same subset
$\mathscr{T}_{ir}(\lambda)$, denoted by $\mathscr{T}_{ir}(a,\mu)$
which is non-empty for $a\in\mathcal {A}_{ir}^c$\footnote{Then in
this case, $2b\leq b+c\leq a+c\leq i$ and so $b\leq
\min\{[\frac{i}{2}],[\frac{r}{2}]\}=:d(i,r)$ is just the number of
copies of $A_0$ occurring separately in non-vanishing
$tr(A_{\tau_1}A_{\tau_2}\cdots A_{\tau_i})$s with some
$\widetilde{A}_{s}$ between.}, whose elements have values of the
following form:
\begin{equation*}
tr\Big((\widetilde{A}_{s_1}\cdots
\widetilde{A}_{s_{\mu_1}})A_0\cdot(\widetilde{A}_{s_{\mu_1+1}}\cdots\widetilde{A}_{s_{\mu_2}})
A_0 \cdot\cdots\cdot(\widetilde{A}_{s_{\mu_{b-1}+1}}\cdots
\widetilde{A}_{s_{\mu_b}})A_0\cdot(\widetilde{A}_{s_{\mu_b+1}}\cdots
\widetilde{A}_{s_c})\Big)
\end{equation*}
 or, respectively,
\begin{equation}\label{form of elements}
tr\Big((\widetilde{A}_{s_1}\cdots
\widetilde{A}_{s_{\mu_1}})A_0\cdot(\widetilde{A}_{s_{\mu_1+1}}\cdots\widetilde{A}_{s_{\mu_2}})
A_0 \cdot\cdots\cdot(\widetilde{A}_{s_{\mu_{b-1}+1}}\cdots
\widetilde{A}_{s_{c}})A_0\Big).
\end{equation}
Then we can define
\begin{equation}\label{Omega-ab}
\Omega_{ir}^c(a,b):=\sum_{\mu\in\Lambda^c_b}\sum_{\omega\in\mathscr{T}_{ir}(a,\mu)}\omega,\quad
\overline{\Omega}_{ir}^c(a,b):=\sum_{\mu\in\overline{\Lambda}^c_b}\sum_{\omega\in\mathscr{T}_{ir}(a,\mu)}\omega,
\end{equation}
which are essentially identical when $c>b$ (the first is zero when
$c=b$) since, by the symmetry of the \emph{trace} function, both are
the sum of all non-vanishing elements $tr(A_{\tau_1}\cdots
A_{\tau_i})$, $\tau\in\mathscr{S}_i(\sigma)$,
$\sigma\in\mathscr{P}_{ir}$ with $\sigma_0=a$ (without counting
multiplicities $\Theta,\overline{\Theta}$), of the form (\ref{form
of elements}) with $b$ copies of $A_0$ occurring separately among
$c$ number of $\widetilde{A}_{s}$ where $\widetilde{A}_{s}=A_{s}$ or
$A_sA_1^{\iota_s}A_{s'}$ ($s,s'\geq2$, $\iota_s\geq1$).

Equipped with these refinements, we are ready to derive a more
tractable formula than (\ref{Upsilon ir}) for the coefficients
$\Upsilon_{ir}$ in the power series expression of the $i$-th order
mean curvature $Q_i(t)$ of the tubular hypersurface $M_t^n$.
\begin{prop}
With notations as above, we have for $i\geq2,~r\geq2$,
\begin{equation}\label{Upsilon i,r}
\Upsilon_{ir}=\sum_{\omega\in\mathscr{T}_{ir}(0)}\omega+\sum_{c=1}^{D(i,r)}\sum_{
a\in\mathcal {A}_{ir}^c}(-1)^{ a}\sum_{b=1}^c
\Big(\Theta(a,b)\Omega_{ir}^{c}(a,b)+\overline{\Theta}(
a,b)\overline{\Omega}_{ir}^{c}(a,b)\Big).
\end{equation}
\end{prop}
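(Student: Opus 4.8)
The plan is to regroup the finite sum (\ref{Upsilon ir}) by purely combinatorial bookkeeping, exploiting that the coefficient matrices satisfy $A_0^2=A_0$, $A_0A_1=A_1A_0=0$, together with the cyclicity $tr(CD)=tr(DC)$. A term $tr(A_{\tau_1}\cdots A_{\tau_i})$ vanishes unless these relations leave it nonzero, and by construction the surviving terms with $\sigma_0=a$ are exactly the members of $\mathscr{T}_{ir}(a)$. Hence I would first stratify (\ref{Upsilon ir}) by the value $a=\sigma_0$ of the sign exponent, discarding the vanishing contributions, to reach
\begin{equation*}
\Upsilon_{ir}=\sum_{a=0}^{i-1}(-1)^{a}\sum_{\omega\in\mathscr{T}_{ir}(a)}\omega,
\end{equation*}
which peels off $\sum_{\omega\in\mathscr{T}_{ir}(0)}\omega$ as the $a=0$ term, matching the first summand of (\ref{Upsilon i,r}).

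For $a\ge1$ I would invoke the already-established partition of $\mathscr{T}_{ir}(a)$ into the labeled pieces $\mathscr{T}_{ir}(\lambda)$ over $\lambda\in\Lambda^c(a)$, $1\le c\le D(i,r)$, and then refine $\Lambda^c(a)$ by the combinatorial type $(b,\mu)$ of its $A_0$-placements, separating the interior patterns $\mu\in\Lambda^c_b$ (those with $\mu_b<c$) from the cyclic-boundary patterns $\mu\in\overline{\Lambda}^c_b$ (those with $\mu_b=c$), in accordance with the normal forms (\ref{trace mon.}) and (\ref{form of elements}). The crucial input, recorded above, is that all $\lambda$ in a single class $\Lambda^c(a,\mu)$ produce labeled trace sets $\mathscr{T}_{ir}(\lambda)$ sharing a common value-content $\mathscr{T}_{ir}(a,\mu)$; summing the values over that class therefore reproduces $\sum_{\omega\in\mathscr{T}_{ir}(a,\mu)}\omega$ weighted by the multiplicity $|\Lambda^c(a,\mu)|$, which by definition equals $\Theta(a,b)$ when $\mu\in\Lambda^c_b$ and $\overline{\Theta}(a,b)$ when $\mu\in\overline{\Lambda}^c_b$.

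Summing over all classes $(c,b,\mu)$ and recognizing the two inner double sums as $\Omega_{ir}^c(a,b)$ and $\overline{\Omega}_{ir}^c(a,b)$ from (\ref{Omega-ab}), I would obtain
\begin{equation*}
\sum_{\omega\in\mathscr{T}_{ir}(a)}\omega=\sum_{c=1}^{D(i,r)}\sum_{b=1}^{c}\Big(\Theta(a,b)\,\Omega_{ir}^{c}(a,b)+\overline{\Theta}(a,b)\,\overline{\Omega}_{ir}^{c}(a,b)\Big),
\end{equation*}
where only $a\in\mathcal{A}_{ir}^c$ actually contribute, since $\mathscr{T}_{ir}(a,\mu)$ is empty otherwise. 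Substituting this back into the stratified expression for $\Upsilon_{ir}$ and reattaching the $a=0$ term yields (\ref{Upsilon i,r}) directly.

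The genuinely delicate point is the cyclic-boundary bookkeeping behind the split into $\Lambda^c_b$ and $\overline{\Lambda}^c_b$. Because $A_0^{\lambda_0}$ and $A_0^{\lambda_c}$ occupy cyclically adjacent ends of the word (\ref{trace mon.}), they amalgamate into a single separated $A_0$ at the wrap-around gap precisely when $\lambda_0+\lambda_c\ge1$ — the case $\mu_b=c$ — which is exactly why the boundary placements carry the different multiplicity $\overline{\Theta}$ (with $\overline{\Theta}(a,0)\equiv1$, $\overline{\Theta}(a,1)=a+1$) rather than $\Theta$ (with $\Theta(a,1)=1$). I would verify most carefully that this partition of the $A_0$-placements into interior and boundary types is both exhaustive and disjoint, so that each collapsed trace word is counted exactly once and with the correct weight; granting the value-content coincidence on each class, the identity (\ref{Upsilon i,r}) then follows by summation.
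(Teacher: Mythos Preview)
Your proposal is correct and follows essentially the same approach as the paper's own proof: stratify (\ref{Upsilon ir}) by $a=\sigma_0$, peel off the $a=0$ term, partition the remaining terms via $\mathscr{T}_{ir}(\lambda)$ and then by the classes $\Lambda^c(a,\mu)$ with $\mu\in\Lambda^c_b$ or $\overline{\Lambda}^c_b$, invoke the value-content coincidence $\mathscr{T}_{ir}(\lambda)=\mathscr{T}_{ir}(a,\mu)$, and collect the multiplicities $\Theta(a,b)$, $\overline{\Theta}(a,b)$ into the definitions (\ref{Omega-ab}). The paper presents this as a single chain of equalities with no additional commentary, while you spell out the cyclic-boundary bookkeeping more explicitly, but the argument is the same.
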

\begin{rem} As footnoted before, the index $b$ in the summation actually
takes values from $1$ to $\min\{c,d(i,r)\}$, though it does not
matter for the calculation since when $c\geq b>d(i,r)$, $\mathcal
{A}_{ir}^c\ni a\leq i-c\leq d(i,r)<b$ and thus
$\Theta(a,b)=\overline{\Theta}(a,b)=0$.
\end{rem}
\begin{rem}\label{low order ex Upsilon}
For example, we list some low order cases as follows:
\begin{eqnarray}
&&\Upsilon_{22}=tr(A_1^2)-2tr(A_2A_0),~~
\Upsilon_{i2}=(-1)^{i-1}i~tr(A_2A_0),~~ for~~i\geq3;\nonumber\\
&& \Upsilon_{23}=2tr(A_1A_2)-2tr(A_3A_0),~~
\Upsilon_{33}=tr(A_1^3)+3tr(A_3A_0),\nonumber\\
&&\Upsilon_{i3}=(-1)^{i-1}i~tr(A_3A_0),~~ for~~ i\geq4;\nonumber\\
&&\Upsilon_{24}=2tr(A_1A_3)+tr(A_2^2)-2tr(A_4A_0),~~\Upsilon_{34}=3tr(A_1^2A_2)+3tr(A_4A_0)-3tr(A_2^2A_0),\nonumber\\
&&\Upsilon_{44}=tr(A_1^4)-4tr(A_4A_0)+4tr(A_2^2A_0)+2tr(A_2A_0A_2A_0),\nonumber\\
&&\Upsilon_{i4}=(-1)^{i-1}i~tr(A_4A_0)
+(-1)^{i-2}\Big(i~tr(A_2^2A_0)+\frac{i(i-3)}{2}tr(A_2A_0A_2A_0)\Big),~~for~~i\geq5.\nonumber
\end{eqnarray}
\end{rem}
\begin{proof}
Based on the refinements above, direct calculations show
\begin{eqnarray}
\Upsilon_{ir}&=&\sum_{ a=0}^{i-1}(-1)^{
a}\sum_{\omega\in\mathscr{T}_{ir}( a)}\omega
=\sum_{\omega\in\mathscr{T}_{ir}(0)}\omega+\sum_{ a=1}^{i-1}(-1)^{ a}\sum_{c=1}^{D(i,r)}\sum_{\lambda\in\Lambda^c( a)}\sum_{\omega\in\mathscr{T}_{ir}(\lambda)}\omega \nonumber\\
&=&\sum_{\omega\in\mathscr{T}_{ir}(0)}\omega+\sum_{c=1}^{D(i,r)}\sum_{
a\in\mathcal {A}_{ir}^c}(-1)^{
a}\sum_{b=1}^c\Big(\sum_{\mu\in\Lambda^c_b}+\sum_{\mu\in\overline{\Lambda}^c_b}\Big)
\sum_{\lambda\in\Lambda^c( a,\mu)}\sum_{\omega\in\mathscr{T}_{ir}(\lambda)}\omega \nonumber\\
&=&\sum_{\omega\in\mathscr{T}_{ir}(0)}\omega+\sum_{c=1}^{D(i,r)}\sum_{
a\in\mathcal {A}_{ir}^c}(-1)^{
a}\sum_{b=1}^c\Big(\sum_{\mu\in\Lambda^c_b}+\sum_{\mu\in\overline{\Lambda}^c_b}\Big)
\sum_{\lambda\in\Lambda^c( a,\mu)}
\sum_{\omega\in\mathscr{T}_{ir}(a,\mu)}\omega\nonumber\\
&=&\sum_{\omega\in\mathscr{T}_{ir}(0)}\omega+\sum_{c=1}^{D(i,r)}\sum_{
a\in\mathcal {A}_{ir}^c}(-1)^{ a}\sum_{b=1}^c
\Big(\sum_{\mu\in\Lambda^c_b}\Theta(
a,b)+\sum_{\mu\in\overline{\Lambda}^c_b}\overline{\Theta}( a,b)\Big)
\sum_{\omega\in\mathscr{T}_{ir}(a,\mu)}\omega\nonumber\\
&=&\sum_{\omega\in\mathscr{T}_{ir}(0)}\omega+\sum_{c=1}^{D(i,r)}\sum_{
a\in\mathcal {A}_{ir}^c}(-1)^{ a}\sum_{b=1}^c
\Big(\Theta(a,b)\Omega_{ir}^{c}(a,b)+\overline{\Theta}(
a,b)\overline{\Omega}_{ir}^{c}(a,b)\Big).\nonumber
\end{eqnarray}
The examples in Remark \ref{low order ex Upsilon} can be verified by
(\ref{Upsilon i,r}) immediately.
\end{proof}

Now we consider the cases when $i\geq r\geq2$. Obviously, we have
now
$$d(i,r)=D(i,r)=[\frac{r}{2}]=:d.$$ It is easily seen from the
definitions that $\mathscr{T}_{rr}(0)$ consists of only one element
$tr(A_1^r)$ and $\mathscr{T}_{ir}(0)$ is empty for $i>r$. Moreover,
for each $e\geq1$, the map from $\mathscr{P}_{rr}$ to
$\mathscr{P}_{r+e~r}$ defined by
$$(\sigma_0,\sigma_1,\cdots,\sigma_r)\mapsto
(\sigma_0+e,\sigma_1,\cdots,\sigma_r)$$ gives a one-to-one
correspondence. Consequently, we have
\begin{eqnarray}
&&\mathcal {A}_{r+e~r}^c=\{a+e~|~a\in\mathcal {A}_{rr}^c\},\quad
for~~ 1\leq c\leq d,\nonumber\\
&&\mathscr{T}_{r+e~r}(a+e,\mu)=\mathscr{T}_{rr}(a,\mu),\quad for
~~a\in\mathcal {A}_{rr}^c,~~ \mu\in\Lambda^c_b~ or
~\overline{\Lambda}^c_b,\nonumber
\end{eqnarray}
 and thus
 \begin{equation*}
\Omega_{r+e~r}^c(a+e,b)=\Omega_{rr}^c(a,b),~~
\overline{\Omega}_{r+e~r}^c(a+e,b)=\overline{\Omega}_{rr}^c(a,b),\quad
for ~~a\in\mathcal {A}_{rr}^c,~~1\leq b\leq c.
\end{equation*}
Therefore, the formulae for $\Upsilon_{rr}$ and $\Upsilon_{r+e~r}$
($r\geq2,~e\geq1$) in the form (\ref{Upsilon i,r}) can be rewritten
as
\begin{eqnarray}
&&\Upsilon_{rr}=tr(A_1^r)+\sum_{c=1}^d\sum_{a\in\mathcal
{A}_{rr}^c}(-1)^{ a}\sum_{b=1}^c
\Big(\Theta(a,b)\Omega_{rr}^c(a,b)+\overline{\Theta}(
a,b)\overline{\Omega}_{rr}^c(a,b)\Big),\label{Upsilon rr}\\
&&\Upsilon_{r+e~r}=\sum_{c=1}^d\sum_{a\in\mathcal {A}_{rr}^c}(-1)^{
a+e}\sum_{b=1}^c
\Big(\Theta(a+e,b)\Omega_{rr}^c(a,b)+\overline{\Theta}(
a+e,b)\overline{\Omega}_{rr}^c(a,b)\Big).\label{Upsilon re r}
\end{eqnarray}

Similarly, for $r\geq3$, $\mathscr{T}_{r-1~r}(0)$ consists of
$(r-1)$ copies of $tr(A_1^{r-2}A_2)$. Moreover, we have
\begin{eqnarray}
&&\mathcal {A}_{r-1~r}^c=\{a-1~|~a\in\mathcal {A}_{rr}^c\},\quad
for~~ 1\leq c\leq d,\nonumber\\
&&\mathscr{T}_{r-1~r}(a-1,\mu)=\mathscr{T}_{rr}(a,\mu),\quad for
~~a\in\mathcal {A}_{rr}^c, ~~a-1\geq b,~~ \mu\in\Lambda^c_b~ or
~\overline{\Lambda}^c_b,\nonumber
\end{eqnarray}
 and thus
 \begin{equation*}
\Omega_{r-1~r}^c(a-1,b)=\Omega_{rr}^c(a,b),~~
\overline{\Omega}_{r-1~r}^c(a-1,b)=\overline{\Omega}_{rr}^c(a,b),\quad
for ~~a\in\mathcal {A}_{rr}^c, ~~a-1\geq b,~~1\leq b\leq c.
\end{equation*}
Therefore, the formula for $\Upsilon_{r-1~r}$ ($r\geq3$) can be
rewritten as
\begin{equation}\label{Upsilon r-1}
\Upsilon_{r-1~r}=(r-1)tr(A_1^{r-2}A_2)+\sum_{c=1}^d\sum_{a\in\mathcal
{A}_{rr}^c}(-1)^{a-1}\sum_{b=1}^c
\Big(\Theta(a-1,b)\Omega_{rr}^c(a,b)+\overline{\Theta}(
a-1,b)\overline{\Omega}_{rr}^c(a,b)\Big),
\end{equation}
which also holds for $r=2$ by (\ref{Upsilon 0,1}) and the initial
conditions of $\Theta$, $\overline{\Theta}$.

Taking iterative sum of (\ref{inductive Theta}), we obtain the
following useful formula for $\Theta$ and $\overline{\Theta}$:
\begin{equation}\label{iterative sum of theta}
C_e^0\Theta(a,b)-C_e^1\Theta(a+1,b)+\cdots+(-1)^eC_e^e\Theta(a+e,b)=(-1)^e\Theta(a,b-e),
\end{equation}
for any $a,b\in\mathbb{Z}$ and $e\geq0$, where
$C_e^j=\frac{e!}{j!(e-j)!}$. For $r\geq2,~e\geq0$, put
\begin{eqnarray}
\Phi_r(e):=C_e^0\Upsilon_{rr}+C_e^1\Upsilon_{r+1~r}+\cdots+C_e^e\Upsilon_{r+e~r},\nonumber\\
\Psi_r(e):=C_e^0\Upsilon_{r-1~r}+C_e^1\Upsilon_{rr}+\cdots+C_e^e\Upsilon_{r-1+e~r},\nonumber
\end{eqnarray}
then taking sum of (\ref{Upsilon rr}), (\ref{Upsilon re r}) and
(\ref{Upsilon r-1}) iteratively by using (\ref{iterative sum of
theta}), we obtain
\begin{eqnarray}
&&\Phi_r(e)=tr(A_1^r)+\sum_{c=1}^d\sum_{a\in\mathcal
{A}_{rr}^c}(-1)^{ a+e}\sum_{b=1}^c
\Big(\Theta(a,b-e)\Omega_{rr}^c(a,b)+\overline{\Theta}(
a,b-e)\overline{\Omega}_{rr}^c(a,b)\Big),\nonumber\\
&&\Psi_r(e)=(r-1)tr(A_1^{r-2}A_2)+e~tr(A_1^r)\nonumber\\&&\quad\quad\quad+\sum_{c=1}^d\sum_{a\in\mathcal
{A}_{rr}^c}(-1)^{ a-1+e}\sum_{b=1}^c
\Big(\Theta(a-1,b-e)\Omega_{rr}^c(a,b)+\overline{\Theta}(
a-1,b-e)\overline{\Omega}_{rr}^c(a,b)\Big).\nonumber
\end{eqnarray}
In particular, since $\mathcal {A}_{rr}^d=\{r-d\}$, by the initial
conditions of $\Theta$, $\overline{\Theta}$, we have
\begin{eqnarray}
&&\Phi_r(d)=tr(A_1^r)+(-1)^r\overline{\Omega}_{rr}^d(r-d,d),\nonumber\\
&&\Phi_r(d+1)=tr(A_1^r),\nonumber\\
&&\Psi_r(d)=(r-1)tr(A_1^{r-2}A_2)+d~tr(A_1^r)+(-1)^{r-1}\overline{\Omega}_{rr}^d(r-d,d),\nonumber\\
&&\Psi_r(d+1)=(r-1)tr(A_1^{r-2}A_2)+(d+1)~tr(A_1^r),\nonumber
\end{eqnarray}
where by (\ref{Omega-ab}),
\begin{equation*}
\overline{\Omega}_{rr}^d(r-d,d)=\begin{cases} tr\Big((A_2A_0)^d\Big),&for~~r=2d;\\
d~tr\Big((A_2A_0)^{d-1}A_3A_0\Big),& for~~r=2d+1.
\end{cases}
\end{equation*}
Recall the formulae of $A_r$ in (\ref{shape bar expansion}), then
the above formulae can be rewritten as ($d\geq1$)
\begin{eqnarray}
&&\Phi_{2d}(d)=tr\Big(S_{\nu}^{2d}\Big)+3^{-d}tr\Big((K_{\nu}^{\bot})^d\Big),\nonumber\\
&&\Phi_{2d+1}(d)=tr\Big(S_{\nu}^{2d+1}\Big)-d~3^{-d+1}4^{-1}tr\Big((K_{\nu}^{\bot})^{d-1}\mathcal {K}_{\nu}^{\bot}\Big),\nonumber\\
&&\Phi_{2d}(d+1)=tr\Big(S_{\nu}^{2d}\Big), \nonumber\\
&&\Phi_{2d+1}(d+1)=tr\Big(S_{\nu}^{2d+1}\Big),\label{sequence phi psi}\\
&&\Psi_{2d}(d)=(2d-1)tr\Big(S_{\nu}^{2d-2}K_{\nu}^{\top}\Big)+(3d-1)~tr\Big(S_{\nu}^{2d}\Big)-3^{-d}tr\Big((K_{\nu}^{\bot})^d\Big),\nonumber\\
&&\Psi_{2d+1}(d)=(2d)tr\Big(S_{\nu}^{2d-1}K_{\nu}^{\top}\Big)+(3d)~tr\Big(S_{\nu}^{2d+1}\Big)+d~3^{-d+1}4^{-1}tr\Big((K_{\nu}^{\bot})^{d-1}\mathcal {K}_{\nu}^{\bot}\Big),\nonumber\\
&&\Psi_{2d}(d+1)=(2d-1)tr\Big(S_{\nu}^{2d-2}K_{\nu}^{\top}\Big)+(3d)~tr\Big(S_{\nu}^{2d}\Big),\nonumber\\
&&\Psi_{2d+1}(d+1)=(2d)tr\Big(S_{\nu}^{2d-1}K_{\nu}^{\top}\Big)+(3d+1)~tr\Big(S_{\nu}^{2d+1}\Big).\nonumber
\end{eqnarray}
In conclusion, we get the following
\begin{thm}\label{Thm-subm-d}
Let $M^m$ be a submanifold of a Riemannian manifold $N^{n+1}$ and
$M_t^n$ be the tubular hypersurface around $M$ of sufficiently small
radius $t\in(0,\varepsilon)$. For any integer $d\geq1$,
\begin{itemize}
\item[(i)] if each $M_t^n$ has constant
$Q_{2d},Q_{2d+1},\cdots,Q_{3d}$, then on $M$,
$$Q_{2d}^{\nu}+3^{-d}\rho_d(K_{\nu}^{\bot})\equiv Const;$$
\item[(ii)] if each $M_t^n$ has constant
$Q_{2d+1},Q_{2d+2},\cdots,Q_{3d+1}$, then on $M$,
$$Q_{2d+1}^{\nu}-d~3^{-d+1}4^{-1}tr\Big((K_{\nu}^{\bot})^{d-1}\mathcal
{K}_{\nu}^{\bot}\Big)\equiv 0;$$
\item[(iii)] if each $M_t^n$ has constant
$Q_{2d},Q_{2d+1},\cdots,Q_{3d+1}$, then on $M$, $$Q_{2d}^{\nu}\equiv
Const;$$
\item[(iv)] if each $M_t^n$ has constant
$Q_{2d+1},Q_{2d+2},\cdots,Q_{3d+2}$, then on $M$,
$$Q_{2d+1}^{\nu}\equiv 0;$$
\item[(v)] if each $M_t^n$ has constant
$Q_{2d-1},Q_{2d},\cdots,Q_{3d-1}$, then on $M$,
$$(2d-1)tr\Big(S_{\nu}^{2d-2}K_{\nu}^{\top}\Big)+(3d-1)~Q_{2d}^{\nu}-3^{-d}\rho_d(K_{\nu}^{\bot})\equiv Const;$$
\item[(vi)] if each $M_t^n$ has constant
$Q_{2d},Q_{2d+1},\cdots,Q_{3d}$, then on $M$,
$$(2d)tr\Big(S_{\nu}^{2d-1}K_{\nu}^{\top}\Big)+(3d)~Q_{2d+1}^{\nu}+d~3^{-d+1}4^{-1}tr\Big((K_{\nu}^{\bot})^{d-1}\mathcal
{K}_{\nu}^{\bot}\Big)\equiv0;$$
\item[(vii)] if each $M_t^n$ has constant
$Q_{2d-1},Q_{2d},\cdots,Q_{3d}$, then on $M$,
$$(2d-1)tr\Big(S_{\nu}^{2d-2}K_{\nu}^{\top}\Big)+(3d)~Q_{2d}^{\nu}\equiv Const;$$
\item[(viii)] if each $M_t^n$ has constant
$Q_{2d},Q_{2d+1},\cdots,Q_{3d+1}$, then on $M$,
$$(2d)tr\Big(S_{\nu}^{2d-1}K_{\nu}^{\top}\Big)+(3d+1)~Q_{2d+1}^{\nu}\equiv 0.$$
\end{itemize}
\end{thm}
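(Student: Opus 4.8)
The plan is to turn every hypothesis of the form ``$Q_l$ is constant on each tube'' into the statement ``all Taylor coefficients of $t^lQ_l(t)$ are pointwise constants on $M$'', and then to read off the claimed invariants directly from the four evaluations collected in (\ref{sequence phi psi}). First I would record the basic principle. Fix $l$ and suppose $Q_l(t)$ is constant on each $M_t^n$, $t\in(0,\varepsilon)$. For fixed $t$ the value of $Q_l(t)$ at $\eta_\nu(t)=\exp_p(t\nu)$ is then the same for every $(p,\nu)\in\mathcal{V}_1M$, so the one-parameter function $\Upsilon_l(t)=t^lQ_l(t)=\sum_r\Upsilon_{lr}t^r$ does not depend on $(p,\nu)$; a convergent power series that is independent of a parameter has each coefficient independent of that parameter, so every $\Upsilon_{lr}$ is a constant on $\mathcal{V}_1M$. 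Consequently, whenever $Q_r,Q_{r+1},\dots,Q_{r+e}$ are all constant on the tubes, the fixed integer combination $\Phi_r(e)=\sum_{j=0}^eC_e^j\Upsilon_{r+j~r}$ is a constant independent of $\nu$ and of the point of $M$, and similarly $\Psi_r(e)$ is such a constant as soon as $Q_{r-1},\dots,Q_{r-1+e}$ are constant.

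With this principle the proof becomes a matter of matching indices. Each of the eight assertions is obtained by choosing $r$ and $e$ so that the indices $r+j$ (resp. $r-1+j$, $0\le j\le e$) run over exactly the hypothesized list of constant mean curvatures, and then substituting the corresponding line of (\ref{sequence phi psi}). Thus (i) comes from $\Phi_{2d}(d)$, whose constituents $\Upsilon_{2d+j~2d}$, $0\le j\le d$, involve precisely $Q_{2d},\dots,Q_{3d}$, giving $Q_{2d}^\nu+3^{-d}\rho_d(K_\nu^\bot)\equiv Const$; likewise (iii) uses $\Phi_{2d}(d+1)$, (v) uses $\Psi_{2d}(d)$, (vii) uses $\Psi_{2d}(d+1)$, (ii) uses $\Phi_{2d+1}(d)$, (iv) uses $\Phi_{2d+1}(d+1)$, (vi) uses $\Psi_{2d+1}(d)$, and (viii) uses $\Psi_{2d+1}(d+1)$. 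In every case one need only check that the index range matches the hypothesis, which is immediate from the definitions of $\Phi_r$ and $\Psi_r$, after which (\ref{sequence phi psi}) identifies the constant combination with the stated curvature invariant.

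It remains to improve ``$\equiv Const$'' to ``$\equiv 0$'' in the four odd cases (ii), (iv), (vi), (viii). Here I would use the behaviour of the invariants under $\nu\mapsto-\nu$. Recall from the introduction that $S_{-\nu}=-S_\nu$, $K_{-\nu}^\top=K_\nu^\top$, $K_{-\nu}^\bot=K_\nu^\bot$, while $\mathcal{K}_{-\nu}^\top=-\mathcal{K}_\nu^\top$ and $\mathcal{K}_{-\nu}^\bot=-\mathcal{K}_\nu^\bot$. Each monomial occurring in these four combinations carries exactly one sign-odd feature, namely an odd power of $S_\nu$ or a single factor $\mathcal{K}_\nu^\bot$, so the whole combination is an odd function of $\nu$. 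Since $-\nu$ is again an admissible unit normal at the same point, the value of the already-established constant at $-\nu$ must equal its value at $\nu$; but oddness makes the two values negatives of one another, which forces the constant to be $0$. The even cases (i), (iii), (v), (vii) remain only ``$\equiv Const$'' precisely because there every monomial is even in $\nu$, so this sign argument yields nothing.

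In summary, essentially all the real work has already been done in deriving the combinatorial identity (\ref{Upsilon i,r}) and compressing it into the four clean evaluations (\ref{sequence phi psi}); against that background the theorem is largely an exercise in bookkeeping. I expect the only genuine subtlety, and the single place where an idea rather than an index count is required, to be the parity argument of the previous paragraph, which is what converts the odd-order identities into vanishing statements; verifying that each chosen $(r,e)$ reproduces the correct list of constant $Q_l$'s is routine.
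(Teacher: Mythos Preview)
Your proposal is correct and follows essentially the same approach as the paper's own proof: reduce constancy of $Q_i(t)$ on tubes to constancy of all $\Upsilon_{ir}$ on $\mathcal{V}_1M$, plug into the appropriate $\Phi_r(e)$ or $\Psi_r(e)$ from (\ref{sequence phi psi}), and finish the odd cases by the $\nu\mapsto-\nu$ parity argument. Your index-matching (i)$\leftrightarrow\Phi_{2d}(d)$, \dots, (viii)$\leftrightarrow\Psi_{2d+1}(d+1)$ is exactly right and in fact more explicit than the paper, which simply says ``each $\Phi_r(e),\Psi_r(e)$ would be constant under the corresponding assumptions'' without spelling out the correspondence.
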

\begin{proof}
Recall the definition of $\Upsilon_{ir}$ which is the coefficient of
$t^r$ defined over the unit normal bundle $\mathcal {V}_1M$ of $M$
in the power series expansion of $\Upsilon_i(t):=t^iQ_i(t)$ with
respect to $t\in(0,\varepsilon)$, where $Q_i(t)$ is the $i$-th order
mean curvature of $M_t^n$. Therefore, if $Q_i(t)$ is constant on
$M_t^n$ and thus is a function depending only on $t$, then
$\Upsilon_{ir}$ would be constant on $\mathcal {V}_1M$ for each
$r\geq0$, which implies that each $\Phi_r(e),~\Psi_r(e)$
($r=2d,2d+1,~e=d,d+1$) in the sequence (\ref{sequence phi psi})
would be constant under the corresponding assumptions listed in the
proposition. This verifies the identities in the cases where the
vanishing assertion in (ii),(iv),(vi),(viii) is because of the
anti-symmetry with respect to $\nu$ of
$Q_{2d+1}^{\nu},~tr\Big((K_{\nu}^{\bot})^{d-1}\mathcal
{K}_{\nu}^{\bot}\Big)$ and
$tr\Big(S_{\nu}^{2d-1}K_{\nu}^{\top}\Big).$
\end{proof}

Now we are ready to prove Theorem \ref{Thm-subm-l}. \vskip 0.2cm
 \textbf{Proof of Theorem \ref{Thm-subm-l}.} The case (b) follows
 directly from Theorem \ref{Thm-subm-d} with $d=[\frac{l}{2}]$. It
 suffices to prove the case (a) for $1\leq l\leq4$.

For $l=1$, it follows from (\ref{Upsilon 0,1}) that
$Q_1^{\nu}=tr(A_1)=\Upsilon_{11}$ which is the coefficient of $t^1$
in the power series expansion of $\Upsilon_1(t):=tQ_1(t)$ with
respect to $t\in(0,\varepsilon)$. Therefore, if $Q_1(t)$ is constant
on each tubular hypersurface $M_t^n$, then $Q_1^{\nu}$ is constant
on the unit normal bundle of $M$ and thus vanishes since
$Q_1^{-\nu}=-Q_1^{\nu}$.

For $l=2$, if $Q_2(t),Q_3(t)$ are constant on each tubular
hypersurface $M_t^n$, then as the coefficients of $t^2$ in the power
series expansions of $t^2Q_2(t)$ and $t^3Q_3(t)$, $\Upsilon_{22}$
and $\Upsilon_{32}$ would be constant. So in this case, by Remark
\ref{low order ex Upsilon}, we get
$$\Upsilon_{22}=Q_2^{\nu}-\frac{2}{3}tr(K_{\nu}^{\bot})\equiv
Const,\quad \Upsilon_{32}=tr(K_{\nu}^{\bot})\equiv Const,$$ which
verify the first assertion. If in addition $Q_1(t)$ is constant on
$M_t^n$, then by (\ref{Upsilon 0,1}),
$$\Upsilon_{12}=tr(A_2)=Q_2^{\nu}+tr(K_{\nu}^{\top})+\frac{1}{3}tr(K_{\nu}^{\bot})\equiv
Const,$$ which implies $tr(K_{\nu}^{\top})\equiv Const$ and thus
verifies the second assertion.

For $l=3$, if $Q_3(t),Q_4(t)$ are constant on each tubular
hypersurface $M_t^n$, then as the coefficients of $t^3$ in the power
series expansions of $t^3Q_3(t)$ and $t^4Q_4(t)$, $\Upsilon_{33}$
and $\Upsilon_{43}$ would be constant. So in this case, by Remark
\ref{low order ex Upsilon} and the anti-symmetry of $Q_3^{\nu}$ and
$tr(\mathcal {K}_{\nu}^{\bot})$ with respect to $\nu$, we get
$$\Upsilon_{33}=Q_3^{\nu}+\frac{3}{4}tr(\mathcal
{K}_{\nu}^{\bot})\equiv0,\quad \Upsilon_{43}=-tr(\mathcal
{K}_{\nu}^{\bot})\equiv0,$$which verify the first assertion. If in
addition $Q_2(t)$ is constant on $M_t^n$, then by Remark \ref{low
order ex Upsilon} and the anti-symmetry of $Q_3^{\nu}$ and
$tr(S_{\nu}K_{\nu}^{\top})$ with respect to $\nu$,
$$\Upsilon_{23}=2Q_3^{\nu}+2tr(S_{\nu}K_{\nu}^{\top})-\frac{1}{2}tr(\mathcal
{K}_{\nu}^{\bot})\equiv0,$$ which implies
$tr(S_{\nu}K_{\nu}^{\top})\equiv0$ and thus verifies the second
assertion.

For $l=4$, if $Q_4(t), Q_5(t), Q_6(t)$ are constant on each tubular
hypersurface $M_t^n$, then as the coefficients of $t^4$ in the power
series expansions of $t^4Q_4(t)$, $t^5Q_5(t)$ and $t^6Q_6(t)$,
$\Upsilon_{44}$, $\Upsilon_{54}$ and $\Upsilon_{64}$ would be
constant.  So in this case, by Remark \ref{low order ex Upsilon}, we
get
\begin{eqnarray}
&&\Upsilon_{44}=Q_4^{\nu}-4\Big(tr(A_4A_0)-tr(A_2^2A_0)\Big)+\frac{2}{9}\rho_2(K_{\nu}^{\bot})\equiv
Const,\nonumber\\
&&\Upsilon_{54}=5\Big(tr(A_4A_0)-tr(A_2^2A_0)\Big)-\frac{5}{9}\rho_2(K_{\nu}^{\bot})\equiv
Const,\nonumber\\
&&\Upsilon_{64}=-6\Big(tr(A_4A_0)-tr(A_2^2A_0)\Big)+\rho_2(K_{\nu}^{\bot})\equiv
Const,\nonumber
\end{eqnarray}
which imply that $Q_4^{\nu}\equiv Const$,
$\rho_2(K_{\nu}^{\bot})\equiv Const$, $tr(A_4A_0)-tr(A_2^2A_0)\equiv
Const$ and thus verify the first assertion. If in addition $Q_3(t)$
is constant on $M_t^n$, then by Remark \ref{low order ex Upsilon},
$$\Upsilon_{34}=3Q_4^{\nu}+3tr(S_{\nu}^2K_{\nu}^{\top})+3\Big(tr(A_4A_0)-tr(A_2^2A_0)\Big)\equiv
Const,$$ which implies $tr(S_{\nu}^2K_{\nu}^{\top})\equiv Const$ and
thus verifies the second assertion.

The proof is now complete. \hfill $\Box$

\section{Focal submanifolds}
This section is devoted to the proof of Theorem \ref{Thm-focal-filt}
which is a geometrical filtration for the focal submanifolds of
isoparametric functions on a complete Riemannian manifold. The
assertions in (i-ix) of this theorem essentially come from Theorem
\ref{Thm-subm-l} and Theorem \ref{Thm-subm-d}, while (a-d) of this
theorem treat some special cases as corollaries of (i-ix) and the
following preliminary on austere submanifolds.

\begin{prop}\label{austere CPC}
Let $M^m$ be an austere submanifold of constant principal curvatures
in a Riemannian manifold $N^{n+1}$. If $m=2,~n\geq4$; or $m=3,~
n\geq5$; or $m=4,~n\geq10$, then $M$ is a totally geodesic
submanifold in $N$.
\end{prop}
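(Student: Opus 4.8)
The statement is pointwise in nature: $M$ is totally geodesic precisely when $S_\nu=0$ for every unit normal $\nu$ at every point, so the plan is to fix a point $p\in M$ and prove the corresponding linear-algebra fact for the normal space $W:=\mathcal{V}_pM\cong\mathbb{R}^{n+1-m}$ and the linear map $W\to\mathrm{Sym}(\mathcal{T}_pM)$, $\nu\mapsto S_\nu$. I would show that austerity together with constancy of the principal curvatures forces this map to vanish once $\dim W=n+1-m$ exceeds the stated thresholds (namely $\dim W\geq 3,\,3,\,7$ for $m=2,3,4$).

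First I would extract the algebraic content of the two hypotheses. Since $S_\nu$ is linear in $\nu$, each $\rho_j(S_\nu)=tr(S_\nu^j)$ is a homogeneous degree-$j$ polynomial in $\nu$; constancy of the principal curvatures makes it constant on the unit sphere of $W$, hence equal to $c_j\|\nu\|^j$, which as a polynomial forces $\rho_j\equiv0$ for odd $j$ (also immediate from austerity) and $\rho_{2j}(S_\nu)=c_{2j}(\|\nu\|^2)^j$ for even $j$. Equivalently the characteristic polynomial of $S_\nu$ is $x^{m_0}\prod_i(x^2-\lambda_i^2\|\nu\|^2)^{m_i}$ with fixed multiplicities and fixed nonzero $\lambda_i$. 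Polarizing the degree-two identity $tr(S_\nu^2)=c_2\|\nu\|^2$ gives $tr(S_\mu S_\nu)=c_2\langle\mu,\nu\rangle$, so $\nu\mapsto S_\nu$ is a constant multiple of an isometric embedding of $W$; in particular $M$ fails to be totally geodesic exactly when $c_2>0$, in which case the $S_\nu$ are linearly independent.

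Assuming $c_2>0$, I would seek a contradiction through a Clifford-type bound. By the spectral description above $S_\nu$ satisfies $x^{\epsilon}\prod_i(x^2-\lambda_i^2\|\nu\|^2)=0$. In the simplest pattern, a single pair $\pm\lambda$ filling all of $\mathcal{T}_pM$, this reads $S_\nu^2=\lambda^2\|\nu\|^2\,\mathrm{Id}$, so after rescaling the $S_\nu$ form a Clifford system of anticommuting symmetric involutions; the products $S_{e_1}S_{e_i}$ give $\dim W-1$ anticommuting complex structures on $\mathcal{T}_pM$, so $\dim W-1$ is bounded by the Radon--Hurwitz number and $\dim W$ cannot be as large as required. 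For the degenerate patterns (those with a nontrivial common kernel) $S_\nu$ has constant rank, and I would instead bound $\dim W$ through the family of kernel/image subspaces, in each case pitting $\dim W$ against the maximal size of the corresponding operator system and using the quartic relations obtained by polarizing $\rho_4(S_\nu)=c_4(\|\nu\|^2)^2$.

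Finally I would assemble the case analysis for $m=2,3,4$. For $m=2$ austerity gives $S_\nu^2=\lambda^2\|\nu\|^2\,\mathrm{Id}$, and at most two anticommuting symmetric involutions exist on $\mathbb{R}^2$, so $\dim W\leq2$, contradicting $n\geq4$. For $m=3$ the spectrum is $\{0,\pm\lambda\}$ and $S_\nu$ has constant rank two; controlling the kernel line field $\nu\mapsto\ker S_\nu$ (which is well defined since $0$ is a simple eigenvalue) forces $\dim W\leq2$, contradicting $n\geq5$. The main obstacle is $m=4$: here several spectral patterns occur, $\{\pm\lambda,\pm\lambda\}$, $\{0,0,\pm\lambda\}$, and $\{\pm\lambda,\pm\mu\}$ with $\lambda\neq\mu$, the pure Clifford pattern alone only yields $\dim W\leq4$, and it is the mixed and degenerate patterns that account for the larger threshold. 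Ruling them out so that $\dim W\leq6$ in every case, contradicting $n\geq10$, is the delicate step and requires the most careful bookkeeping of the invariant-subspace families together with the orthogonality relations coming from $\rho_2$ and $\rho_4$.
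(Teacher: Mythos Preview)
Your reduction to a pointwise linear-algebra problem and the polarization of $\rho_2$ to get $\langle S_{e_i},S_{e_j}\rangle=c_2\delta_{ij}$ (so that, if $c_2>0$, the space $\mathcal{S}_p$ of shape operators is an $(n+1-m)$-dimensional subspace of $\mathrm{Sym}(\mathcal{T}_pM)$) is exactly what the paper does. The divergence is in the next step.

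The paper observes that $\mathcal{S}_p$ is an \emph{austere subspace} of quadratic forms on $\mathcal{T}_pM$ in the sense of Bryant and then simply invokes Bryant's classification of maximal austere subspaces, which gives $\dim\mathcal{S}_p\le 2$ for $m=2,3$ and $\dim\mathcal{S}_p\le 6$ for $m=4$; the contradiction is immediate. You instead try to prove these dimension bounds from scratch via Clifford/Radon--Hurwitz arguments. For $m=2$ your argument is fine: the spectrum is $\{\pm\lambda\}$, polarization of $S_\nu^2=\lambda^2\|\nu\|^2\,\mathrm{Id}$ gives a genuine symmetric Clifford system, and two is the maximum on $\mathbb{R}^2$. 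But for $m=3$ and $m=4$ the degenerate and mixed spectral patterns (nontrivial kernel, or two distinct pairs $\{\pm\lambda,\pm\mu\}$) do \emph{not} reduce to a Clifford relation, and your outlines (``controlling the kernel line field'', ``careful bookkeeping of the invariant-subspace families together with the $\rho_2$ and $\rho_4$ relations'') are precisely the substance of Bryant's paper---a nontrivial classification you would in effect be reproving. As written, those two cases are genuine gaps.

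So your approach is not wrong, but it is incomplete exactly where the real difficulty lies; the efficient fix is to recognize that the bounds you are aiming for are Bryant's theorem and cite it, which is what the paper does.
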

\begin{proof}
Recall that a submanifold $M^m$ is called an austere submanifold of
constant principal curvatures if there exist some constants
$\lambda_1,\cdots,\lambda_p$ such that the shape operator $S_{\nu}$
of $M$ with respect to any unit normal vector $\nu$ at any point has
eigenvalues
$\lambda_1,-\lambda_1,\cdots,\lambda_p,-\lambda_p,0,\cdots,0$
($(m-2p)$ zeroes). In particular, on such submanifold we have
$$\|S_{\nu}\|^2\equiv Const=:C,$$
which implies that for any orthonormal frame
$\{e_{m+1},\cdots,e_{n+1}\}$ of the normal bundle $\mathcal {V}M$ of
$M$, $$\langle S_{e_i}, S_{e_j}\rangle=C\delta_{ij}, \quad
for~~i,j=m+1,\cdots,n+1.$$ Therefore, if $M$ is not totally geodesic
and thus $C>0$, then $S_{e_{m+1}},\cdots,S_{e_{n+1}}$ are
independent self-dual operators on the tangent bundle $\mathcal
{T}M$ of $M$ with constant eigenvalues of opposite signs, which
means that at any point $q$ of $M$ the space $\mathcal {S}_q$ of
shape operators $S_{\nu}$, $\nu\in\mathcal {V}_qM$, is an
$(n+1-m)$-dimensional subspace of self-dual operators on $\mathcal
{T}_qM$.

If the shape operators are looked as quadratic functions on
$\mathcal {T}_qM$ via the metric, then $\mathcal {S}_q$ is an
$(n+1-m)$-dimensional \emph{austere} subspace of quadratic functions
on $\mathcal {T}_qM$ in the sense of \cite{Bry} where, among other
things, Bryant solved the classification problem of maximal austere
subspaces of quadratic functions on a real vector space of dimension
$m=2,3,~or~4$. In particular, it follows from his classification
that each maximal austere subspace is of dimension $2$ when $m=2$ or
$3$, and not greater than $6$ when $m=4$. Consequently,
$n+1-m=dim(\mathcal {S}_q)\leq2$ when $m=2$ or $3$, and
$n+1-m=dim(\mathcal {S}_q)\leq6$ when $m=4$, which verifies the
assertions by contradiction.
\end{proof}
One can see from the proof above that for fixed $m$ and sufficiently
large $n$, an austere submanifold $M^m$ of constant principal
curvatures in $N^{n+1}$ should be totally geodesic. It is
interesting to find out an optimal relationship for such pairs of
$(m,n)$.

 \textbf{Proof of Theorem \ref{Thm-focal-filt}.} First of all, by
 definition $f$ is a $k$-isoparametric function if and only if each
 regular level hypersurface $M_t^n$ of $f$ has constant higher order mean
 curvatures $Q_1(t),\cdots,Q_k(t)$. As showed in section \ref{section 4}, if $f$
 is a $k$-isoparametric function, then the coefficients $\Upsilon_{ir}$ in the power series
  expansion formula of $t^iQ_i(t)$ are constant for $1\leq i\leq k$,
  $r\geq0$.

Now we come to verify the assertions listed in the theorem case by
case.

(i) $k=1$. By (\ref{Upsilon 0,1}) and (\ref{shape bar expansion}),
\begin{eqnarray}
&&\Upsilon_{11}=tr(A_1)=tr(S_{\nu})\equiv Const,\nonumber\\
&&\Upsilon_{12}=tr(A_2)=Q_2^{\nu}+tr(K_{\nu}^{\top})+\frac{1}{3}tr(K_{\nu}^{\bot})\equiv
Const,\nonumber\\
&&\Upsilon_{13}=tr(A_3)=Q_3^{\nu}+tr(S_{\nu}K_{\nu}^{\top})+\frac{1}{2}tr(\mathcal
{K}_{\nu}^{\top})+\frac{1}{4}tr(\mathcal {K}_{\nu}^{\bot})\equiv
Const.\nonumber
\end{eqnarray}
Moreover, since $S_{-\nu}=-S_{\nu}$,
$K_{-\nu}^{\top}=K_{\nu}^{\top}$, $\mathcal
{K}_{-\nu}^{\top}=-\mathcal {K}_{\nu}^{\top}$ and $\mathcal
{K}_{-\nu}^{\bot}=-\mathcal {K}_{\nu}^{\bot}$, we know that
$\Upsilon_{11}$ and $\Upsilon_{13}$ are anti-symmetric with respect
to $\nu$ and thus $\Upsilon_{11}=\Upsilon_{13}\equiv 0$.

(ii) $k=2$. As in the proof of Theorem \ref{Thm-subm-l}, by Remark
\ref{low order ex Upsilon} we can get
\begin{eqnarray}
&&\Upsilon_{22}=Q_2^{\nu}-\frac{2}{3}tr(K_{\nu}^{\bot})\equiv
Const,\nonumber\\
&&
\frac{1}{2}\Upsilon_{23}=Q_3^{\nu}+tr(S_{\nu}K_{\nu}^{\top})-\frac{1}{4}tr(\mathcal
{K}_{\nu}^{\bot})\equiv0.\nonumber
\end{eqnarray}
Combining these with (i), we get
\begin{eqnarray}
&&tr(K_{\nu})=tr(K_{\nu}^{\top})+tr(K_{\nu}^{\bot})=\Upsilon_{12}-\Upsilon_{22}\equiv Const,\nonumber\\
&&\frac{1}{2}tr(\mathcal {K}_{\nu})=\frac{1}{2}tr(\mathcal
{K}_{\nu}^{\top})+\frac{1}{2}tr(\mathcal
{K}_{\nu}^{\bot})=\Upsilon_{13}-\frac{1}{2}\Upsilon_{23}\equiv0.\nonumber
\end{eqnarray}

(iii) $k=3$. Similarly as before we can get
\begin{eqnarray}
&&\Upsilon_{32}=tr(K_{\nu}^{\bot})\equiv
Const,\nonumber\\
&&\Upsilon_{33}=Q_3^{\nu}+\frac{3}{4}tr(\mathcal
{K}_{\nu}^{\bot})\equiv0.\nonumber
\end{eqnarray}
Combining these with (ii), we get
\begin{eqnarray}
&&Q_2^{\nu}=\Upsilon_{22}+\frac{2}{3}\Upsilon_{32}\equiv Const,\nonumber\\
&&tr(S_{\nu}K_{\nu}^{\top})-tr(\mathcal
{K}_{\nu}^{\bot})=\frac{1}{2}\Upsilon_{23}-\Upsilon_{33}\equiv0.\nonumber
\end{eqnarray}

(iv) $k=4$. Similarly as before we can get
$$\Upsilon_{43}=-tr(\mathcal {K}_{\nu}^{\bot})\equiv0,$$
which implies
$Q_3^{\nu}=\Upsilon_{33}+\frac{3}{4}\Upsilon_{43}\equiv0,$
$tr(S_{\nu}K_{\nu}^{\top})=\frac{1}{2}\Upsilon_{23}-\Upsilon_{33}-\Upsilon_{43}\equiv0.$

(v) $k=5$. Similarly as before we can get
\begin{eqnarray}
&&Q_{4}^{\nu}-\frac{2}{9}\rho_2(K_{\nu}^{\bot})=\Upsilon_{44}+\frac{4}{5}\Upsilon_{54}\equiv
Const,\nonumber\\
&&3tr\Big(S_{\nu}^{2}K_{\nu}^{\top}\Big)+\rho_2(K_{\nu}^{\bot})=\Upsilon_{34}-3\Upsilon_{44}-3\Upsilon_{54}\equiv
Const.\nonumber
\end{eqnarray}

(vi) $k=6$. Similarly as before we can get
$$\rho_2(K_{\nu}^{\bot})=18\Big(\frac{1}{5}\Upsilon_{54}+\frac{1}{6}\Upsilon_{64}\Big)\equiv
Const,$$ which, together with (v), implies $Q_{4}^{\nu}\equiv
Const,$ $tr\Big(S_{\nu}^{2}K_{\nu}^{\top}\Big)\equiv Const.$ By (vi)
of Theorem \ref{Thm-subm-d}, we can get
$2tr\Big(S_{\nu}^{3}K_{\nu}^{\top}\Big)+3Q_{5}^{\nu}+\frac{1}{12}tr\Big(K_{\nu}^{\bot}\mathcal
{K}_{\nu}^{\bot}\Big)\equiv0.$

(vii) $k=3d+1$, $d\geq2$. The identities listed in this case
successively come from (iii),(i),(v),(ii),(viii) of Theorem
\ref{Thm-subm-d} since now $M_t^n$ has constant
$Q_1,\cdots,Q_{3d+1}$.

(viii) $k=3d+2$, $d\geq2$. The identities listed in this case
successively come from (iv),(viii),(vi),(v) of Theorem
\ref{Thm-subm-d} since now $M_t^n$ has constant
$Q_1,\cdots,Q_{3d+2}$.

(ix) $k=3d+3$, $d\geq2$. The identities listed in this case
successively come from (i),(vi) of Theorem \ref{Thm-subm-d} since
now $M_t^n$ has constant $Q_1,\cdots,Q_{3d+3}$.

(a) $m=0$. Obviously, $Q_2^{\nu}=tr(K_{\nu}^{\top})\equiv0$, which,
together with the second identity in (i), implies
$Ric(\nu)=tr(K_{\nu})=tr(K_{\nu}^{\bot})\equiv Const$ for any
$k\geq1$.

(b) $m=n$. By continuity it is easily seen that each higher order
mean curvature $Q_i(t)$ of $M_t^n$ converges to $Q_i$ of $M$ when
$t$ goes to $0$, \emph{i.e.}
$Q_i=\lim\limits_{t\rightarrow0}Q_i(t).$ Therefore, if $f$ is
$k$-isoparametric, then $Q_i(t)$ and hence $Q_i$ are constant
functions on $M_t^n$ and $M$ respectively, which shows that $M$ is
$k$-isoparametric since (locally) $M_t^n$ consists of two
equidistant parallel hypersurfaces, say $M_t^{+}$ and $M_t^{-}$, on
both ``sides" of $M$. The odd order mean curvatures $Q_{2j+1}$
$(2j+1\leq k)$ vanish on $M$ because of the anti-symmetry of odd
order mean curvatures with respect to unit normal vectors and the
identically constancy of $Q_{2j+1}(t)$ on $M_t^{+}$ and $M_t^{-}$.

(c) For $m\leq[\frac{2k+1}{3}]$, $k\leq6$, (i-vi) above show that
$M^m$ has constant $Q_1^{\nu},\cdots,Q_m^{\nu}$ and thus has
constant principal curvatures which occur in opposite signs since
all odd order mean curvatures vanish. For $m\leq[\frac{2k-1}{3}]$,
$k\geq7$, (vii-ix) derive the same conclusion as above. For $k=n$,
\emph{i.e.}, $f$ is totally isoparametric, then each order mean
curvature of $M_t^n$ is constant, which by Theorem \ref{Thm-subm-l}
implies that $M$ has constant each order mean curvature and thus is
an austere submanifold of constant principal curvatures. The second
part of this case has been proved in Proposition \ref{austere CPC}.

(d) Similarly as in (c), it is easily seen that under each
assumption of $m$ and $k$,
$\rho_1(K_{\nu}^{\bot}),\cdots,\rho_{n-m}(K_{\nu}^{\bot})$ are
constant on $M$ and thus the (restricted) vertical Jacobi operator
$K_{\nu}^{\bot}$ has constant eigenvalues since the restricted
vertical Jacobi operator $K_{\nu}^{\bot}$ is a self-dual operator of
order $(n-m)$.

The proof is now complete. \hfill $\Box$

\begin{ack}
This paper is greatly indebted to a previous joint work \cite{GTY}
with Professor Zizhou Tang and Doctor Wenjiao Yan during which I
enjoyed and benefited much from Tang's instruction and Yan's
friendship. By this opportunity I would like to express my sincere
gratitude to them. Many thanks also to Professors Q. S. Chi, C. K.
Peng and G. Thorbergsson for their valuable suggestions and helpful
comments during the preparation of this paper.
\end{ack}

\end{document}